\newcommand{\ess}{\operatorname{ess}}
\newcommand{\ds}{\displaystyle}
\newcommand{\rrvert}{\vert}
\newcommand{\rrVert}{\Vert}
\newcommand{\llvert}{\vert}
\newcommand{\llVert}{\Vert}
\newtheorem{theorem}{Theorem}[section]
\newtheorem{corollary}[theorem]{Corollary}
\newtheorem{lemma}[theorem]{Lemma}
\newtheorem{proposition}[theorem]{Proposition}
\begin{document}
\begin{frontmatter}

%\dochead{}
\title{Gaussian-type lower bounds for the density of solutions of SDEs
driven by fractional Brownian~motions}
\runtitle{Gaussian-type lower bounds for density of fractional SDEs\hspace*{10pt}}

\begin{aug}
% Corresponding author: Mireia Besalú - mireiabesalu@gmail.com% Updated by VTEXPTS2LaTeX.exe, 16.12.2014 08:47
%Updated by VTEXPTS2LaTeX.exe, 15.12.2014 15:32
\author[A]{\fnms{M.}~\snm{Besal\'u}\corref{}\ead[label=e1]{mireiabesalu@gmail.com}},
\author[B]{\fnms{A.}~\snm{Kohatsu-Higa}\thanksref{t1}\ead[label=e2]{arturokohatsu@gmail.com}}
\and
\author[C]{\fnms{S.}~\snm{Tindel}\ead[label=e3]{samy.tindel@univ-lorraine.fr}}
\runauthor{M. Besal\'{u}, A. Kohatsu-Higa and S. Tindel}
\affiliation{Universit\'e de Lorraine, Ritsumeikan University and Japan Science and Technology Agency, and Universit\'e de Lorraine}
%\dedicated{}
\address[A]{M. Besal\'{u}\\
INRIA Nancy Grand Est\\
\quad and Institut {\'E}lie Cartan Lorraine\\
Universit\'e de Lorraine\\
B.P. 239\\
54506 Vand{\oe}uvre-l{\`e}s-Nancy\\
France\\
\printead{e1}}
\address[B]{A. Kohatsu-Higa\\
Department of Mathematical Sciences\\
Ritsumeikan University\\
\quad and Japan Science and Technology
Agency\\
1-1-1 Nojihigashi\\
Kusatsu, Shiga, 525-8577\\
Japan\\
\printead{e2}}
\address[C]{S. Tindel\\
Institut {\'E}lie Cartan Lorraine\\
Universit\'e de Lorraine\\
B.P. 239\\
54506 Vand{\oe}u\-vre-l{\`e}s-Nancy\\
France\\
\printead{e3}}
\end{aug}
\thankstext{t1}{Supported by grants of the Japanese
government.}

% HISTORY:
%
\received{\smonth{10} \syear{2013}}% Updated by VTEXPTS2LaTeX.exe,
%15.12.2014 15:32
%
\revised{\smonth{9} \syear{2014}}% Updated by VTEXPTS2LaTeX.exe,
%15.12.2014 15:32

% ABSTRACT
%
\begin{abstract}
In this paper we obtain Gaussian-type lower bounds for the density
of solutions to stochastic differential equations (SDEs) driven by a
fractional Brownian motion with Hurst parameter $H$. In the
one-dimensional case with additive noise, our study encompasses all parameters
$H\in(0,1)$, while the multidimensional case is restricted
to the case $H>1/2$. We rely on a mix of pathwise methods for
stochastic differential equations and stochastic analysis tools.
\end{abstract}

% KEYWORDS
% Pirmas kwd is didziosios raides
%
\begin{keyword}[class=AMS]
\kwd[Primary ]{60G22}
\kwd[; secondary ]{34K50}
\kwd{60H07}
\end{keyword}
\begin{keyword}
\kwd{Fractional Brownian motion}
\kwd{stochastic equations}
\kwd{density function estimates}
\end{keyword}
\end{frontmatter}

%s1 #&#
\section{Introduction}\label{sec1}

Let $B=(B^1,\ldots,B^d)$ be a $d$-dimensional fractional
Brownian motion (fBm in the sequel) defined on a complete
probability space $(\Omega,\mathcal{F},\mathbf{P})$, with Hurst parameter
$H\in(0,1)$. Recall that this means that $B$ is a centered
Gaussian process indexed in $[0,1]$, whose coordinate processes are
independent, and their covariance structure is defined by
%
%e1 #&#
%e2 #&#
\begin{eqnarray}
R ( t,s ) &:=& \mathbf{E} \bigl[B_s^j
B_t^j \bigr] =\tfrac{1}{2} \bigl(
s^{2H}+t^{2H}-|t-s|^{2H} \bigr)
\nonumber
\\[-8pt]
\label{eq:exp-cov-fbm}\\[-8pt]
\eqntext{\mbox{for } s,t
\in[0,1] \mbox{ and } j=1,\ldots,d.}
\end{eqnarray}
This implies that the variance of an increment is given by
%
%e3 #&#
\begin{equation}
\label{eq:var-increm-fbm}
\mathbf{E} \bigl[ \bigl(B_t^j
-B_s^j \bigr)^2 \bigr]= |t-s|^{2H}\qquad
\mbox{for }s,t\in[0,1].
\end{equation}
In particular, this process is $\gamma$-H\"older continuous a.s. for
any $\gamma<H$ and is an $H$-self similar process. This converts fBm
into a natural
generalization of Brownian motion and explains the fact that it
is used in applications \cite{KS,OTHP,WTT}.

We are concerned here with the following class of stochastic
differential equations (SDEs)
in $\mathbb{R}^m$ driven by $B$ on the time interval $[0,1]$:
%
%e4 #&#
\begin{equation}
\label{eq:sde-intro}
X_t =a +\int_0^t
V_0 (X_s)\,ds+ \sum_{i=1}^d
\int_0^t V_i (X_s)\,dB^i_s,
\end{equation}
where $a\in\mathbb{R}^m$ is a generic initial condition, and
$\{V_i;  0\le i\le
d\}$ is a collection of smooth and bounded vector fields of
$\mathbb{R}^{m}$. Though equation (\ref{eq:sde-intro}) can be solved
thanks to rough paths methods in the general case $H\in(1/4,1)$, $d\ge
1$, we shall
consider in the sequel three situations which can be handled without
recurring to this kind of technique:
\begin{longlist}[(3)]
\item[(1)] The one-dimensional case with additive noise and $H\in(0,1)$, which
can be
treated via simple ODE techniques.

\item[(2)] The one-dimensional situation, namely $m=d=1$ with $H\in(1/2,1)$,
where the
equation can be solved thanks to a Doss--Sussman-type methodology,
as mentioned in~\cite{NSi}.

\item[(3)] The case of a Hurst exponent $H\in(1/2,1)$, for which Young
integration methods are available; see, for example, \cite{Gu,NR,Za}.
\end{longlist}
Hence, we always understand the solution to equation
(\ref{eq:sde-intro}) according to the three settings mentioned
above. We shall see, however, that rough path-type arguments
shall be involved in some of our proofs.

The process defined as the solution of (\ref{eq:sde-intro}) is
obviously worth studying, and a natural step in this direction
is to analyze the density of the random variable $X_t$ for a fixed
$t>0$. In this respect, the following results are available in
our cases of interest:
\begin{longlist}[(2)]
\item[(1)]
For $m=d=1$, the existence of density for $\mathcal{L}(X_t)$ is
examined in \cite{NSi}.
\item[(2)]
Whenever $H>1/2$ and in a multidimensional setting, the
existence of density is established in \cite{NS}, while
smoothness under elliptic assumptions is handled in~\cite{HN}.
\end{longlist}
Let us also mention that for multidimensional equation
(\ref{eq:sde-intro}) and $H\in(1/4,1/2)$, rough path techniques
also enable the study of densities of the solution. We refer to
\cite{CF,CFV} for existence and \cite{CHLT} for smoothness
results for $\mathcal{L}(X_t)$. However, the only Gaussian-type estimate
for the density we are aware of, is the one contained in~\cite{BOT}, which relies heavily on a skew-symmetric assumption
for the vector fields $V_1,\ldots,V_d$.

The current article is thus dedicated to give Gaussian-type
lower bounds for the density of $X_t$. More specifically, we
work under the following assumptions on the coefficients of
equation~(\ref{eq:sde-intro}):

%hy1.1 #&#
\begin{hypothesis}\label{hyp:vector-fields}
The coefficients $V_0,\ldots,V_d$ of equation
(\ref{eq:sde-intro}) satisfy the following conditions:
\begin{longlist}[(2)]
\item[(1)] If $m=d=1$, then $V_0, V_1\in\mathcal{C}_b^3$, and we also
assume $\lambda\leq|V_1|\leq\Lambda$.
\item[(2)] In the multidimensional case, the vector fields
$V_0,\ldots,V_d$ belong to the space $\mathcal{C}_b^{\infty}$ of smooth
functions bounded together with all their higher order derivatives.
Furthermore, if $V(x)$ denotes the matrix
$(V_1(x),\ldots,V_d(x))\in\mathbb{R}^{m\times d}$ for all $x\in
\mathbb{R}^{m}$,
then we assume the following uniform elliptic
condition:
%
%e5 #&#
\begin{equation}
\label{eq:elliptic-condition-V}
\lambda \mbox{Id}_{m} \le V(x) V^{*}(x) \le
\Lambda \mbox{Id}_{m} \qquad \mbox{for all } x\in\mathbb{R}^{m},
\end{equation}
where the inequalities are understood in the matrix sense and
where $\lambda$ and $\Lambda$ are two given strictly positive
constants which
are independent of $x$.
\end{longlist}
\end{hypothesis}

With these hypotheses in hand, our main goal is to prove
the following result:

%th1.2 #&#
\begin{theorem}\label{thm:low-bnd-intro}
Consider equation (\ref{eq:sde-intro}), under the following three
specific situations:
\begin{longlist}[(III)]
\item[(I)]
$m=d=1$, $H\in(0,1)$, $V_0\in\mathcal{C}_{b}^{1}$ and the noise is additive
(i.e., $V_1$
is a nonvanishing real constant).
\item[(II)] $m=d=1$, $H\in(1/2,1)$ and
Hypothesis \ref{hyp:vector-fields}(1) is satisfied for $V_0,V_1$.
\item[(III)]
Arbitrary $m$, $d\in\mathbb{N}$, $H\in(1/2,1)$ and $V_0,\ldots, V_d$
satisfy Hypothesis~\ref{hyp:vector-fields}(2).
\end{longlist}
Then the solution $X_t$ of equation (\ref{eq:sde-intro})
possesses a density $p_t(x)$ such that for every $x\in\mathbb{R}^m$ and
$t\in(0,1]$, we have
%
%e6 #&#
\begin{equation}
\label{eq:low-bnd-intro}
p_t(x)\geq \frac{c_1}{t^{mH}} \exp \biggl(-
\frac{c_2\llvert x-a\rrvert ^2}{t^{2H}} \biggr),
\end{equation}
for some constants $c_1,c_2$ only depending on $d,m$ and
$V_0,\ldots,V_d$.
\end{theorem}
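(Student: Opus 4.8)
The plan is to establish a sharp lower bound in the regime $|x-a|\lesssim t^H$ and then propagate it to all of $\R^m$ by a deterministic-shift (Cameron--Martin/skeleton) argument; the one--dimensional Cases I, II are handled by a soft version of this scheme using an explicit density formula, while Case III is where the real work lies.

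\emph{Cases I and II.} I would first reduce to additive noise: in Case II the Doss--Sussman/Lamperti change of variable $Y_t=\phi(X_t)$ with $\phi'=1/(V_1\circ\phi)$ turns \eqref{eq:sde-intro} into $dY_t=\wt V_0(Y_t)\,dt+dB_t$ with $\wt V_0\in\cac_b^1$, and since $\phi'$ and $|\phi(x)-\phi(a)|/|x-a|$ are bounded above and below by constants depending only on $\la,\laa$, a Gaussian lower bound for $p^{Y_t}$ transfers to $p^{X_t}$ (Case I is already additive). For the additive equation I would apply the Nualart--Quastel--Viens density formula: with $\mu=\me X_t$ and $g_t(y)=\me[\langle DX_t,-DL^{-1}X_t\rangle\mid X_t=y]$ it reads $p_t(y)=\frac{\me|X_t-\mu|}{2g_t(y)}\exp(-\int_\mu^y \frac{z-\mu}{g_t(z)}dz)$, so $p_t(x)\ge\frac{\me|X_t-\mu|}{2\gamma_2}\exp(-(x-\mu)^2/2\gamma_1)$ whenever $\gamma_1\le g_t\le\gamma_2$. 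The key point is then the two--sided bound $\gamma_1 t^{2H}\le\langle DX_t,-DL^{-1}X_t\rangle\le\gamma_2 t^{2H}$ a.s.: for additive noise $D_\theta X_t=V_1\1_{[0,t]}(\theta)\exp(\int_\theta^t V_0'(X_s)ds)$, so $DX_t$ is pathwise a positive multiple of $\1_{[0,t]}$ bounded away from $0$ and $\infty$, and Mehler's formula reduces the estimate to $\|\1_{[0,t]}\|_{\msh}^2=t^{2H}$ together with the uniform bounds on $V_0',V_1$. Together with $|\mu-a|\le\|V_0\|_\infty t\le Ct^H$ and $\me|X_t-\mu|\ge c\,t^H$ (from $\var(X_t)=\me[\langle DX_t,-DL^{-1}X_t\rangle]\ge\gamma_1 t^{2H}$ and the standard bounds $\me|X_t-\mu|^p\lesssim t^{pH}$), this yields \eqref{eq:low-bnd-intro} after absorbing $\exp(-Ct^{2H}/\gamma_1)=e^{-O(1)}$ into $c_1$.

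\emph{Case III.} No such formula is available, so I would argue in two stages. \emph{Local stage:} for $|x-a|\le Kt^H$ show $p_t(x)\ge c_1 t^{-mH}$. Here one first needs that the Malliavin covariance matrix $\si_{X_t}$ is uniformly elliptic of order $t^{2H}$, $c\,t^{2H}\id_m\le\si_{X_t}\le C\,t^{2H}\id_m$; the upper bound is routine, while the lower bound uses \eqref{eq:elliptic-condition-V} and the \emph{strong local nondeterminism} of fBm, restricting the quadratic form defining $\si_{X_t}$ to $[t/2,t]$, on which the Jacobian of the flow is close to the identity. The local lower bound then follows by comparison with the frozen Gaussian vector $\bar X_t=a+V_0(a)t+V(a)B_t$ (whose density is $\gtrsim t^{-mH}$ for $|x-a|\le Kt^H$, by \eqref{eq:elliptic-condition-V} and $\cov(B_t)=t^{2H}\id_m$), the error $|p_t^{X_t}-p_t^{\bar X_t}|\lesssim t^\ep t^{-mH}$ being estimated from standard Malliavin density bounds and continuity of the solution map for small $t$, with a short ellipticity/support argument covering $t$ bounded away from $0$. \emph{Propagation stage:} for $|x-a|\ge Kt^H$, choose a Cameron--Martin control $h=h^x$ steering the skeleton $\dot\psi_s=V_0(\psi_s)+V(\psi_s)\dot h_s$, $\psi_0=a$, along $\psi_s=a+\frac st(x-a)$ to $\psi_t=x$; ellipticity gives $\dot h_s=V^*(\psi_s)(VV^*)^{-1}(\psi_s)(\frac{x-a}t-V_0(\psi_s))$, and for $H>1/2$ the explicit form of $\|\cdot\|_{\msh}$ gives $\|h\|_{\msh}^2\lesssim|x-a|^2 t^{2H-2}+t^{2H}\lesssim|x-a|^2 t^{-2H}+1$. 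Girsanov's theorem for fBm then yields, with $\tilde X:=X^{B+h}$ (which solves \eqref{eq:sde-intro} with drift $V_0(\cdot)+V(\cdot)\dot h_s$ and skeleton passing through $x$, the Dirac masses below being interpreted via the usual approximation/Malliavin calculus),
\begin{equation*}
p_t(x)=\me\Big[\,\delta_x(\tilde X_t)\exp\!\big(-\der(\dot h)-\tfrac12\|h\|_{\msh}^2\big)\Big]\;\ge\;e^{-\frac12\|h\|_{\msh}^2}\,\me\big[\,\delta_x(\tilde X_t)\,e^{-\der(\dot h)}\,\big],
\end{equation*}
and one closes by proving $\me[\delta_x(\tilde X_t)e^{-\der(\dot h)}]\gtrsim t^{-mH}$ via the local stage applied to $\tilde X$, restricting to the fixed--probability event $\{|\der(\dot h)|\le\|h\|_{\msh}\}$, on which $e^{-\der(\dot h)}\ge e^{-\|h\|_{\msh}}\ge e^{-c-c'|x-a|^2/t^{2H}}$.

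The hardest part is the propagation stage for Case III: the drift $V(\cdot)\dot h_s$ of $\tilde X$ has size $\sim|x-a|/t$, so the flow Jacobian of $\tilde X$, the Malliavin covariance of $\tilde X_t$, and the conditional density of $\tilde X_t$ on $\{|\der(\dot h)|\le\|h\|_{\msh}\}$ all depend on $x$; one must arrange every resulting loss to be either exponential-linear in $|x-a|/t^H$ (hence absorbed into $\exp(-c_2|x-a|^2/t^{2H})$) or compensated by that restriction. Pushing the local estimates through with constants \emph{uniform in $x$}, while feeding the non-Markovian dependence of the fBm increments into the argument only through strong local nondeterminism, is the technical core; Cases I and II are comparatively soft precisely because the Nualart--Quastel--Viens formula performs this propagation automatically.
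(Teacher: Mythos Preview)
Your treatment of Cases I--II is close to the paper's in spirit but not identical, and has one real gap. For Case~I with $H>1/2$ your one-line reduction via Mehler's formula and pointwise bounds on $\mathbf{D}X_t$ is exactly what the paper does: the inner product in $\ch$ is given by the positive kernel $|u-v|^{2H-2}$, so $c\le \mathbf{D}_rX_t\le C$ transfers to $c^2t^{2H}\le\langle\mathbf{D}X_t,\mathbf{D}X_t^\theta\rangle_\ch\le C^2t^{2H}$. For $H<1/2$ this step fails: the bilinear form on $\ch$ is \emph{not} given by a nonnegative kernel (increments of fBm are negatively correlated), so pointwise lower bounds on two functions do \emph{not} automatically give a lower bound on their $\ch$-inner product. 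The paper supplies a genuine argument here (a quadratic programming lemma showing that if the covariance matrix $Q_{jk}=\be[\Delta_j(B)\Delta_k(B)]$ has nonnegative row sums, then $\inf\{x^*Q\tilde x:x\ge b,\tilde x\ge a\}=ab\sum Q_{jk}$, and then checks the row-sum condition from the explicit covariance of fBm); the upper bound for $H<1/2$ also needs a separate estimate via the fractional-derivative description of $\|\cdot\|_\ch$. Your sentence ``Mehler's formula reduces the estimate to $\|\1_{[0,t]}\|_\ch^2=t^{2H}$'' hides precisely this difficulty. For Case~II you reduce to Case~I by the Lamperti transform; the paper does \emph{not} do this and instead works directly with the Doss--Sussman representation $Y_t=F(B'_t,a)$ together with a Girsanov shift to absorb the drift, then performs a one-step splitting of the Girsanov density on $[0,c_1t]\cup[c_1t,t]$. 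Your reduction is legitimate (under Hypothesis~\ref{hyp:vector-fields}(1) one checks $\wt V_0\in\cac_b^1$) and arguably more economical since $H>1/2$ there.

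For Case~III your approach and the paper's diverge completely, and your outline has gaps you flag but do not close. The paper does \emph{not} use a Cameron--Martin shift or a support-type propagation. Instead it writes \eqref{eq:sde-intro} as an anticipating Stratonovich equation driven by the underlying Wiener process $W$ from the Volterra representation $B_t=\int_0^tK(t,s)\,dW_s$, and then runs the Bally--Kohatsu-Higa conditional-density scheme with respect to $W$: on a partition chosen so that $\int_{t_{i-1}}^{t_i}K^2(t,s)\,ds=t^{2H}/n$, one writes $F_i=F_{i-1}+I_i+R_i$ with $I_i=\sum_kV_k(X_{t_{i-1}})\int_{t_{i-1}}^{t_i}K(t,s)\,dW_s^k$ conditionally Gaussian, bounds the remainder and a localization error by conditional Malliavin integration by parts, and chains Gaussian balls from $a$ to $x$. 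The point of passing to $W$ is exactly to recover independent increments and avoid the problem you would face with a naive fBm-Euler scheme (the diagonal variance $\sum_j(t_j-t_{j-1})^{2H}$ vanishes as the mesh shrinks when $H>1/2$).

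Your propagation scheme, by contrast, asks for a local density lower bound for the shifted process $\tilde X$ whose drift $V(\cdot)\dot h_s$ has size $\sim|x-a|/t$; the ``local stage'' you describe compares $X_t$ with the frozen Gaussian $a+V_0(a)t+V(a)B_t$, but for $\tilde X$ the corresponding frozen vector is centered near $a+V(a)\dot h_0\,t$, not near $x$, so the comparison does not place mass at $x$ without further work. More seriously, restricting to $\{|\delta(\dot h)|\le\|h\|_\ch\}$ gives an event of \emph{fixed} (not small-complement) probability, so $\be[\delta_x(\tilde X_t)\1_A]$ is not controlled from below by $p_t^{\tilde X}(x)$ alone; you would need a genuine conditional density lower bound on $A$, which is essentially the same difficulty the Bally--Kohatsu machinery is built to handle. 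In short, the obstacles you identify as ``the technical core'' are exactly why the paper abandons the Girsanov route in the multidimensional case and adopts the discretization-in-$W$ approach instead.
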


As mentioned above, this is (to the best of our knowledge) the
first Gaussian-type lower bound obtained for the density of the solution
of the SDE driven by fBm in a
general setting. It should also be mentioned that lower
bound (\ref{eq:low-bnd-intro}) can be complemented by a similar
upper bound contained in \cite{BNOT}.

Let us say a few words about the methodology we rely on in order
to obtain our lower bound (\ref{eq:low-bnd-intro}). Generally
speaking it is based on Malliavin calculus tools, but the
three results mentioned in Theorem~\ref{thm:low-bnd-intro} are
proved in different ways:
\begin{longlist}[(3)]
\item[(1)] In the one-dimensional additive case, we invoke a
recent formula for densities introduced in \cite{NV} which
yields an easy way to estimate $p_t$ in the case of additive stochastic
equations. We thus include this study for didactical purposes,
and also because we obtain (slightly nonoptimal) Gaussian upper
and lower bounds with elegant methods. Observe that this technique
proves to be useful (generally speaking) for equations with additive
noise, as assessed in a SPDE context in \cite{DNQ}.

\item[(2)] The one-dimensional case with multiplicative noise
is based on the Doss--Sussmann transform and Girsanov-type
arguments. It is rather easy to implement and yields
results when the criterion of \cite{NV} cannot be applied.

\item[(3)]  As far as the general case is concerned, it will be basically
handled, thanks to the decomposition of random variables, using
increments independent of Gaussian increments strategy
introduced in \cite{Ba,Ko}, which has also been invoked successfully,
for example, in \cite{DN}.
However, let us point out two important
differences between the fBm and the diffusion case:
\end{longlist}
\begin{longlist}[(ii)]
\item[(i)]
In the case of SDE (\ref{eq:sde-intro}) without drift
coefficient $V_0$, the first step of the method implemented (for a fixed
$t\in(0,1]$) in \cite{Ba,Ko} amounts to introducing a partition
$\{t_j;  0\le j \le n\}$
such that $t_0=0$ and $t_n=t$, with $n$ large enough, and then
splitting $X_t$ into small contributions of the form
%
%e7 #&#
\begin{equation}
\label{eq:split-Xt-intro}
\quad X_{t_{j+1}} - X_{t_{j}} = \sum
_{i=1}^d V_i (X_{t_{j}})
\bigl[B^i_{t_{j+1}}-B^i_{t_{j}} \bigr] + \sum
_{i=1}^d \int_{t_{j}}^{t_{j+1}}
\bigl[V_i (X_s) - V_i (X_{t_{j}})
\bigr] \,dB^i_s.\hspace*{-10pt}
\end{equation}
Then a main conditionally Gaussian contribution $V_i (X_{t_{j}})
[ B^i_{t_{j+1}}-B^i_{t_{j}}]$ is identified on the right-hand
side of equation (\ref{eq:split-Xt-intro}), while the other terms
are a small remainder in the Malliavin
calculus sense in comparison with the first. Roughly speaking, Gaussian
lower bound
(\ref{eq:low-bnd-intro}) is then obtained by adding those main
contributions and proving that the remainder does not significantly
modify the estimate. However, let us highlight the fact that
this general scheme does not fit to the fractional Brownian
motion setting.

Indeed, due to the fBm dependence structure, the
main contributions to the variance of $X_t$ in the current
situation come from the cross terms
$\mathbf{E}[(B^i_{t_{j+1}}-B^i_{t_{j}})(B^i_{t_{k+1}}-B^i_{t_{k}})]$
for $j\ne k$. We have thus decided to express equation (\ref{eq:sde-intro}) as
an anticipative Stratonovich-type equation with respect to the
Wiener process induced by $B$. This is known to be an
inefficient way to solve the original equation, but turns out to
be very useful in order to analyze the law of $X_t$. We shall
detail this strategy at Section~\ref{sec:gen-setting-d-dim}.

\item[(ii)]
In the case of an equation driven by usual Brownian motion, the
Malliavin--Sobolev norms involved in the computations give
deterministic contributions after conditioning, due to the independence
of increments of the Wiener process. This is not true, however, in the
fBm case, and we thus need to add a proper localization to the
arguments in \cite{Ba,Ko}.

The adaptation of the Brownian methodology to our fBm context is thus
nontrivial. Note that we could also have tried to resort to the
powerful global bounds given in \cite{MN} in order to get our Gaussian
lower bounds. Unfortunately, the exponential moments conditions imposed
in the latter reference are too restrictive to be applied to Malliavin
derivatives of SDEs driven by fBm.
\end{longlist}

Our article is structured as follows: Section~\ref{sec:stoch-calc-fbm} is devoted to recall some useful facts
on fractional Brownian motion and stochastic differential
equations. We handle the one-dimensional case with additive
noise at Section~\ref{sec:one-dim-additive} and the one-dimensional
case with multiplicative noise in Section~\ref{sec:one-dim-multiplicative} with different methodologies.
Finally, the bulk of our article focuses on the general
multidimensional case contained in Section~\ref{sec:general-low-bound}. Some auxiliary results used in
Section~\ref{sec:general-low-bound} dealing with stochastic derivatives are given
in an \hyperref[sec:app1]{Appendix}.

\begin{notation*}
Throughout this paper, unless otherwise
specified, we use $|\cdot|$ for Euclidean norms and
$\|\cdot\|_{L^p}$ for the $L^p(\Omega)$ norm with respect to the
underlying probability measure $\mathbf{P}$. For a random variable $X$,
$\mathcal{L}(X)$ denotes its law and for a $\sigma$-field $\mathcal
{F}$, $X\in\mathcal{F}$ denotes the fact that $X$ is $\mathcal{F}$-measurable.

Consider a finite-dimensional vector space $V$ and a subset $U\subset
\mathbb{R}^d$. The space of
$V$-valued H\"older continuous functions defined on $U$,
with $k$-deriva\-tives which are $\gamma$-H\"older continuous with
$\gamma
\in(0,1)$, will be denoted
by $\mathcal{C}^{k+\gamma}(U;V)$, or just $\mathcal{C}^{k+\gamma}$
when $U=[0,1]$. For a
function $g\in\mathcal{C}^\gamma(V)$ and $0\le s<t\le
1$, we shall consider the semi-norms
%
%e8 #&#
\begin{equation}
\label{eq:def-holder-norms}
\|g\|_{s,t,\gamma}=\sup_{s\le u<v\le
t}
\frac{|g_v-g_u|_{V}}{|v-u|^{\gamma}}.
\end{equation}
The semi-norm $\|g\|_{0,1,\gamma}$ will simply be denoted by
$\|g\|_{\gamma}$. Similarly, for an open set $U$, $\mathcal
{C}^1_b(U;V)$ denotes
the space of bounded continuously differentiable functions with bounded
first derivative.
For\vspace*{1pt} $x,y\in\mathbb{R}^m$, we set $\mathbf{1}_{\{y\ge x\}}:=\prod_{k=1}^m\mathbf{1}_{\{
y_k\ge x_k\}}$. Vectors $x\in\mathbb{R}^m$ denote column vectors,
their $j$th component is denoted by $x^j$ and the transpose of $x$ is
denoted by $x^*$. The identity matrix of order $m\times m$ is denoted
by $Id_m$.

Finally, let us mention that generic constants will be denoted
by $c,c_{H},c_{V}$, etc., independently of their actual value
which may change from one line to the next. This rule will also apply
for the constants $M$ and $M'$ which will appear as localization
parameters, with the following additional convention: each time a
localization constant appears, it increases its value by the addition
of a fixed universal constant from the previous value. For a detailed
explanation, see (\ref{Hub}).
\end{notation*}

%s2 #&#
\section{Stochastic calculus for fractional Brownian motion}
\label{sec:stoch-calc-fbm}

This section is devoted to giving some of the basic elements of stochastic
calculus with respect to $B$.
For some fixed $H\in(0,1)$, we consider $(\Omega,\mathcal{F},\mathbf
{P})$ the
canonical probability space associated with the fractional
Brownian motion (in short fBm) with Hurst parameter $H$. That
is, $\Omega=\mathcal{C}_0([0,1];\mathbb{R}^{d})$ is the Banach space
of continuous
functions
vanishing at $0$ equipped with the supremum norm, $\mathcal{F}$ is the
Borel sigma-algebra and $\mathbf{P}$ is the unique probability
measure on $\Omega$ such that the canonical process
$B=\{B_t=(B^1_t,\ldots,B^d_t),   t\in[0,1]\}$ is a fBm with Hurst
parameter $H$.
In this context, let us recall that $B$ is a $d$-dimensional
centered Gaussian process, whose covariance structure is induced
by equation~(\ref{eq:var-increm-fbm}).
%This can be equivalently
%stated as
%\begin{equation} \label{eq:exp-cov-fbmA}
%R\left( t,s\right) :=\be\lc B_s^j   B_t^j\rc
%=\frac{1}{2}\left( s^{2H}+t^{2H}-|t-s|^{2H}\right),
% \mbox{for }
%s,t\in[0,1] \mbox{ and } j=1,\ldots,d.
%\end{equation}
%In particular it can be shown, by a standard application of
%Kolmogorov's criterion, that $B$ admits a continuous version
%whose paths are $\ga$-H\"older continuous for any $\ga<H$.

%s2.1 #&#
\subsection{Malliavin calculus tools}\label{sec:malliavin-tools}
Gaussian techniques are obviously essential in the
analysis of
fBm driven differential equations like (\ref{eq:sde-intro}), and
we proceed here to introduce some of them; see Chapter~5 in \cite{Nu06} for
further details.

%s2.1.1 #&#
\subsubsection{Wiener space associated to fBm}\label{sec:wiener-space-fbm}
Let $\mathcal{E}$ be the space of $\mathbb{R}^d$-valued step
functions on $[0,1]$, and $\mathcal{H}$ the closure of
$\mathcal{E}$ under the distance defined by the scalar product
\[
\bigl\langle(\mathbf{1}_{[0,t_1]} , \ldots, \mathbf{1}_{[0,t_d]}),(
\mathbf{1}_{[0,s_1]} , \ldots, \mathbf{1}_{[0,s_d]}) \bigr
\rangle_{\mathcal{H}}=\sum_{i=1}^d
R(t_i,s_i).
\]
The space $\mathcal{H}$ is isometric to the reproducing kernel Hilbert
space associated to $B$.

Furthermore, if $(e_1,\ldots,e_d)$ designates the canonical
basis of $\mathbb{R}^d$, one constructs an isometry $K^*$:
$\mathcal{H}\rightarrow L^2([0,1];\mathbb{R}^d)$ such that
$K^*(\mathbf{1}_{[0,t]}  e_{i}) = \mathbf{1}_{[0,t]}$ $K_H(t,\cdot
) e_{i}$,
where the kernel $K=K_H$ is given by
%
%e9 #&#
\begin{eqnarray}
 K(t,s)&=& c_H s^{{1}/2 -H} \int
_s^t (u-s)^{H-{3}/2} u^{H-{1}/2}
\,du, \qquad H>\frac{1}2,
\nonumber
\\
\label{eq:def-kernel-K}
K(t,s)&=& c_{H,1} \biggl(\frac{s}{t} \biggr)^{1/2-H}
(t-s)^{H-1/2}\\
&&{} + c_{H,2} s^{1/2-H} \int
_s^t (u-s)^{H-{1}/2} u^{H-{3}/2}
\,du, \qquad H<\frac{1}2,
\nonumber
\end{eqnarray}
for $0\le s\le t$ and some explicit universal constants $c_H$,
$c_{H,1}, c_{H,2}$. With a slight abuse of notation we will denote the
associated\vspace*{1pt} integral operator by $Kf(x)=\int_0^x f(s)K(x,s)\,ds$. Note
that we have that $R(s,t)=
\int_0^{s\wedge t} K(t,r)\* K(s,r)  \,dr$. Moreover, let us observe
that $K^*$ can be represented
in the following form: for $H\in(1/2,1)$, we have
\[
\label{eq:def-K-star-regular}
\bigl[K^* \varphi\bigr]_t = \int_t^1
\varphi_r \,\partial_r K(r,t) \,dr
\]
while for $H\in(0,1/2)$ it holds that
\[
\bigl[K^* \varphi\bigr]_t = K(1,t) \varphi_t + \int
_t^1 (\varphi_r-
\varphi_t ) \,\partial_r K(r,t) \,dr.
\]
When $H\in(1/2,1)$ it can be shown that $L^{1/H} ([0,1],
\mathbb{R}^d)
\subset\mathcal{H}$, and when $H\in(0,1/2)$ one has
$\mathcal{C}^\gamma\subset\mathcal{H}\subset L^2([0,1])$
for all $\gamma>\frac{1}{2}-H$. We shall also use the following
representations of the inner product in $\mathcal{H}$:
\begin{longlist}[(ii)]
\item[(i)] For $H\in(1/2,1)$ and $\phi,\psi\in\mathcal{H}$, we have
%
%e10 #&#
\begin{equation}
\label{eq:inner-pdt-H-smooth}
\quad\bigl\langle K^*\phi, K^*\psi\bigr\rangle_{L^2([0,1])}= \langle
\phi, \psi\rangle_{\mathcal{H}}=c_H\int_0^1
\!\!\int_0^1 | s-t |^{2H-2} \langle
\phi_{s} , \psi_{t}\rangle_{\mathbb{R}^d} \,ds \,dt.
\end{equation}

\item[(ii)] For $H\in(0,1/2)$, consider any family of
partitions $\pi=(t_j)$ of $[0,1]$, and set
$Q_{jk}=\sum_{i=1}^d\mathbf{E}[\Delta^i_j(B)\Delta^i_k(B)]$ with
$\Delta^i_j(B)=B^i_{t_j}-B^i_{t_{j-1}}$. Then for
$\phi,\psi\in\mathcal{H}$, we have
%
%e11 #&#
\begin{equation}
\label{eq:inner-pdt-H-rough}
\langle\phi, \psi\rangle_{\mathcal{H}} = \lim
_{|\pi|\to0} \sum_{j,k} \langle
\phi_{t_{j-1}} , \psi_{t_{k-1}}\rangle_{\mathbb{R}^d}
Q_{jk}.
\end{equation}

%In order to deduce that $(\oom,\ch,\PP)$ defines an abstract
%Wiener space, we remark that $\ch$ is continuously and densely
%embedded in $\oom$. In fact,
%one proves that the operator $\crr:=\crr_H :\ch\rightarrow
%\bch$ given by
%\begin{equation}\label{eq:def-R}
%\crr\psi:= \int_0^\cdot K_H(\cdot,s) [K^* \psi](s)  ds
%\end{equation}
%defines a dense and continuous embedding from $\ch$ into
%$\oom$; this is due to the fact that $\crr_H \psi$ is
%$H$-H\"older continuous (for details, see \cite[p. 9]{NS}). It
%is worth noticing at this point that the space $\bch$ yields the
%accurate notion of Cameron-Martin space in the fBm context (for
%Brownian motion one obtains $\ch=L^2(\ou)$ and
%$\bch=W^{1,2}(\ou)$). The following regularity result for $\bch$
%is given in \cite[Remark 15.10]{FV-bk}:
%\[
%\bch\hookrightarrow W^{\delta,2} \hookrightarrow
%\cac^{1/\delta-{\rm var}},
% \mbox{for}
%\delta\in\lp\frac12, \frac12+H \rp.
%\]
%

Let us also recall that there exists a $d$-dimensional Wiener
process $W$ defined on $(\Omega,\mathcal{F},\mathbf{P})$ such that
$B$ can be
expressed as
%
%e12 #&#
\begin{equation}
\label{eq:volterra-representation}
B_t=\int_{0}^{t}K(t,r)
\,dW_r, \qquad t\in[0,1].
\end{equation}
This formula will be referred to as Volterra's representation of
fBm.
% It induces another representation for $\bch$ (also apparent in
%\eqref{eq:def-R}): a function $\phi$ lies into $\bch$ if it can
%be written as
%\[
%\phi_t=\iot K(t,r)   \ell_r   dr,
% \mbox{for}
%\ell\in L^2(\ou).
%\]
Formula (\ref{eq:volterra-representation}) has
various important implications. For example, it is readily
checked that $\mathcal{F}_t\equiv\sigma\{B_s;   0\le s\le t\}
=\sigma\{W_s;   0\le
s\le t\}$. This filtration
will appear in the sequel.
\end{longlist}

%s2.1.2 #&#
\subsubsection{Malliavin calculus for $B$}
Isometry arguments allow us to define the Wiener integral
$B(h)=\int_0^{1} \langle h_s, dB_s \rangle$ for any element
$h\in\mathcal{H}$, such that it satisfies $\mathbf{E}[B(h_1)$
$B(h_2)]=\langle h_1,  h_2\rangle_{\mathcal{H}}$ for any
$h_1,h_2\in\mathcal{H}$.
An $\mathcal{F}$-measurable real
valued random variable $F$ is then said to be cylindrical if it
can be
written, for a given $n\ge1$, as
\[
F=f \bigl(B\bigl(h^1\bigr),\ldots,B\bigl(h^n\bigr)
\bigr)= f \biggl( \int_0^{1} \bigl\langle
h^1_s, dB_s \bigr\rangle ,\ldots,\int
_0^{1} \bigl\langle h^n_s,
dB_s \bigr\rangle \biggr),
\]
where $h^i \in\mathcal{H}$ and $f\dvtx \mathbb{R}^n \rightarrow
\mathbb{R}$ is a $C^{\infty}$ bounded function with bounded
derivatives. The set of
cylindrical random variables is denoted by $\mathcal{S}$.

The Malliavin derivative with respect to $B$ is defined as
follows: for $F \in\mathcal{S}$, the derivative of $F$ is the
$\mathbb{R}^d$ valued
stochastic process $(\mathbf{D}_t F )_{0 \leq t \leq1}$ given
by
\[
\mathbf{D}_t F=\sum_{i=1}^{n}
h^i_{t} \frac{\partial
f}{\partial
x_i} \bigl( B\bigl(h^1
\bigr),\ldots,B\bigl(h^n\bigr) \bigr).
\]
More generally, we can introduce iterated derivatives. We will use the
following notation, depending on the situation. For $F \in\mathcal{S}$,
we set for $\mathbf{i}=(i_1,\ldots,i_k)$ and $\mathbf{t}=(t_1,\ldots,t_k)$
\[
\mathbf{D}^k_{\mathbf{t}}=\mathbf{D}^k_{t_1,\ldots,t_k}
F = \mathbf {D}_{t_1} \cdots\mathbf{D}_{t_k} F \quad\mbox{or}\quad
\mathbf{D}^{\mathbf{i}}_{\mathbf
{t}}F=\mathbf{D}^{i_1,\ldots,i_k}_{t_1,\ldots,t_k}
F=\mathbf{D}^{i_1}_{t_1} \cdots\mathbf{D}^{i_k}_{t_k}
F.
\]
For any $p \geq1$, it can be checked that the operator
$\mathbf{D}^k$ is closable from
$\mathcal{S}$ into $L^p(\Omega;\mathcal{H}^{\otimes k})$.
We denote by
$\mathbf{D}^{k,p}$ the closure of the class of
cylindrical random variables with respect to the norm
\[
\| F\| _{k,p}= \Biggl( \mathbf{E} \bigl[
F^{p} \bigr] +\sum_{j=1}^k
\mathbf{E} \bigl[ \bigl\llVert \mathbf{D}^j F\bigr\rrVert
_{\mathcal{H}^{\otimes j}}^{p} \bigr] \Biggr) ^{{1}/{p}},
\]
for $k\geq0$ and $p\geq1$. In particular, $\| F\|
_{0,p}\equiv
\| F\| _{p}
=(\mathbf{E} [ F^{p} ])^{1/p}$.
As it is usually the case in
Malliavin calculus with respect to $W$, the spaces $\mathbf{D}^{k,p}(\mathcal{H})$
are also defined.
The dual operator of
$\mathbf{D}$ is denoted by $\bolds{\delta}$, which corresponds to
the Skorohod integral with respect to the fBm $B$ on the
interval $[0,1]$.
The space of smooth processes $\mathbf{L}^{k,p}(\mathcal{H})$ is induced
by the following norm:
\[
\|u\|_{\mathbf{L}^{k,p}(\mathcal{H})}^{p} = \mathbf{E} \bigl[\| u
\|_{\mathcal{H}}^{p} \bigr]+ \sum_{l=1}^{k}
\mathbf{E} \bigl[\bigl\| \mathbf{D}^{l}u\bigr\| _{\mathcal{H}^{\otimes(l+1)}}^{p}
\bigr].
\]
Finally, the set of smooth integrands is defined as
$\mathbf{D}^{\infty}(\mathcal{H})=\bigcap_{k, p \geq1}
\mathbf{D}^{k,p}(\mathcal{H})$,
and the Malliavin covariance matrix
of $F$ is denoted by $\bolds{\Gamma}_F$.

As mentioned in the \hyperref[sec1]{Introduction}, our lower bound
(\ref{eq:low-bnd-intro}) will be obtained by considering
equation (\ref{eq:sde-intro}) as an equation driven by the
underlying Wiener process $W$ defined in (\ref{eq:volterra-representation}), meaning that we shall also use
stochastic analysis estimates with respect to $W$. We refer
to Chapter~1 in \cite{Nu06} for this classical setting, and just
mention here a some notation: we denote by $\mathrm{D}$ the
differentiation operator with respect to $W$ and by $\delta$ the
corresponding dual operator (Skorohod integral). The respective
norms in the Sobolev spaces $D^{k,p}(L^2([0,1]))$ are denoted by
$\|\cdot\|_{k,p}$ and the space of smooth integrands by
$L^{k,p}$. The following simple relation between $\mathbf{D}$ and
$\mathrm{D}$ is then shown in \cite{Nu06}, Proposition 5.2.1:

%pr2.1 #&#
\begin{proposition} \label{prop:relation-D-DW}
Let ${D}^{1,2}$ be the Malliavin--Sobolev space corresponding to
the Wiener process $W$. Then $\mathbf{D}^{1,2}=(K^{*})^{-1}{D}^{1,2}$,
and for
any $F\in{D}^{1,2}$ we have $\mathrm{D} F = K^{*} \mathbf{D}F$
whenever both members of the relation are well defined.
\end{proposition}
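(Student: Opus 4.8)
The plan is to prove the identity $\mathrm{D}F = K^{*}\bd F$ first for cylindrical random variables, where it reduces to a one-line computation, and then to propagate it to all of $D^{1,2}$ by density, the key structural input being that $K^{*}$ is a surjective isometry from $\ch$ onto $L^{2}([0,1];\R^{d})$ (isometry is built into its construction, and surjectivity follows from the fact that an isometry between Hilbert spaces has closed range together with $\cf_{t}=\si\{B_{s};\,0\le s\le t\}=\si\{W_{s};\,0\le s\le t\}$, which forces the range to be dense).

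The first step is to lift Volterra's representation \eqref{eq:volterra-representation} to the level of Wiener integrals: for every $h\in\ch$ one has $B(h)=\int_{0}^{1}\langle (K^{*}h)_{s},dW_{s}\rangle$. For step functions $h=\1_{[0,t]}e_{i}$ this is exactly \eqref{eq:volterra-representation} written componentwise, and the general case follows since both sides are centered Gaussian, linear in $h$, and share the covariance $\langle h_{1},h_{2}\rangle_{\ch}=\langle K^{*}h_{1},K^{*}h_{2}\rangle_{L^{2}}$, while $\mathcal{E}$ is dense in $\ch$. Consequently a cylindrical functional $F=f(B(h^{1}),\ldots,B(h^{n}))$ of $B$ is literally the same random variable as the cylindrical functional $f(W(K^{*}h^{1}),\ldots,W(K^{*}h^{n}))$ of $W$, and because $K^{*}$ is onto, the classes of cylindrical random variables for $B$ and for $W$ coincide.

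Next, for such an $F$ the derivative with respect to $B$ is $\bd_{t}F=\sum_{i=1}^{n}h^{i}_{t}\,\partial_{i}f(B(h^{1}),\ldots,B(h^{n}))$, whereas, reading $F$ as a functional of $W$, the derivative with respect to $W$ is $\mathrm{D}_{t}F=\sum_{i=1}^{n}(K^{*}h^{i})_{t}\,\partial_{i}f(B(h^{1}),\ldots,B(h^{n}))$. Since each $\partial_{i}f(B(h^{1}),\ldots,B(h^{n}))$ is a scalar random variable and $K^{*}$ is linear, applying $K^{*}$ to $\bd F$ gives precisely $\mathrm{D}F=K^{*}\bd F$, $\bp$-a.s.; and since $K^{*}$ is an isometry this yields $\|\mathrm{D}F\|_{L^{2}([0,1];\R^{d})}=\|\bd F\|_{\ch}$ a.s., hence $\|F\|_{1,2}=\pmb{\|}F\pmb{\|}_{1,2}$ on cylindrical variables. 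To conclude, let $F\in D^{1,2}$ and pick cylindrical $F_{n}\to F$ in $\|\cdot\|_{1,2}$; by the last equality $(F_{n})$ is Cauchy for $\pmb{\|}\cdot\pmb{\|}_{1,2}$, so $F\in\pmb{D}^{1,2}$ and $\bd F_{n}\to\bd F$ in $L^{2}(\oom;\ch)$; applying the bounded operator $K^{*}$ to $\mathrm{D}F_{n}=K^{*}\bd F_{n}$ and letting $n\to\infty$ gives $\mathrm{D}F=K^{*}\bd F$. The reverse inclusion $\pmb{D}^{1,2}\subset D^{1,2}$ follows symmetrically, which proves $\pmb{D}^{1,2}=(K^{*})^{-1}D^{1,2}$; the clause ``whenever both members are well defined'' merely covers the use of iterated derivatives or of $F$ in a larger domain, the argument above being the case $k=1$, $p=2$, with the general $D^{k,p}$ case obtained by iterating it.

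The computations on cylindrical variables are immediate, so the only genuinely delicate point is the first step: the clean identification of $K^{*}$ as a surjective isometry $\ch\to L^{2}([0,1];\R^{d})$ --- notably in the rough regime $H<1/2$, where $\ch$ contains genuine distributions and $K^{*}h$ must be handled through the isometry rather than through a pointwise kernel formula --- and the ensuing coincidence of the cylindrical classes associated with $B$ and with $W$. This is exactly where the structural facts recalled around \eqref{eq:volterra-representation}, in particular $\cf_{t}=\si\{B_{s};\,0\le s\le t\}=\si\{W_{s};\,0\le s\le t\}$, are used.
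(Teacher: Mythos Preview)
Your proof is correct and follows the standard argument: establish $B(h)=W(K^{*}h)$ on step functions, extend by isometry, compute on cylindrical variables, and close by density. The paper does not actually prove this proposition; it simply cites \cite[Proposition~5.2.1]{Nu06}, where exactly this argument is carried out, so your approach coincides with the one implicitly invoked.

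One minor remark: your justification of the surjectivity of $K^{*}$ via the filtration equality $\cf_{1}^{B}=\cf_{1}^{W}$ is valid but slightly circuitous (you are really using that a nonzero $g\perp\mathrm{Ran}\,K^{*}$ would make $W(g)$ independent of $\cf_{1}^{B}$ yet measurable with respect to it). In the reference this surjectivity is obtained more directly from the explicit fractional-calculus inverse of $K^{*}$, which is also what the paper exploits later (e.g.\ in \eqref{eq:DWDB}); either route is fine here.
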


In fact the above proposition says that the derivatives $\mathbf{D}$ and
$\mathrm{D}$ are somewhat interchangeable. Indeed, using formula (5.14)
in \cite{Nu06}, which gives an explicit formula for $(K^*)^{-1}$, one
obtains such a property. In particular, we will use that for $F\in
\mathcal{F}_t$ with $F\in{D}^{k,p}$ and for $\mathbf{u}=(u_{1},\ldots
,u_{k})\in[0,1]^{k}$ and $\mathbf{r}=(r_1,\ldots,r_n)$, we have
%
%e13 #&#
\begin{equation}
\label{eq:DWDB}
\bigl|\mathrm{D}^k_{\mathbf{u}}F\bigr|\le\mathop{\operatorname{ess}\operatorname{sup}}_{u_i\le r_i;
i=1,\ldots,k}\bigl|\mathbf{D}^k_{\mathbf{r}}F\bigr|K(t,u_1) \cdots K(t,u_k).
\end{equation}
For the proof of (\ref{eq:DWDB}) and other {useful} properties, see
\hyperref[sec:app1]{Appendix}.

Some of our computations in Section~\ref{sec:general-low-bound}
will rely on some conditional Malliavin calculus arguments, for
which some definitions need to be recalled. First, for a given
$t\in[0,1]$ and $F\in L^{2}(\Omega)$, we shorten notation and
write
\[
\mathbf{E}_t[F]:=\mathbf{E}[F| \mathcal{F}_t],
\]
and also set $\mathbf{P}_t$ for the respective conditional probability
and $\operatorname{Cov}_t(G)$ for the conditional covariance matrix of a
Gaussian vector $G$.
We shall only use conditional Malliavin calculus with respect to the
underlying Wiener process $W$, for which we recall the following
definitions: For a random variable $F$ and $t\in[0,1]$, let $\|F\|
_{k,p,t}$ and
$\Gamma_{F,t}$ be the quantities defined (for $k\ge0$, $p>0$) by
%
%e14 #&#
\begin{eqnarray}
\|F\|_{k,p,t}&=& \Biggl( \mathbf{E}_t \bigl[
F^{p} \bigr] +\sum_{j=1}^k
\mathbf{E}_t \bigl[ \bigl\llVert {D}^j F\bigr\rrVert
_{(L^{2}_{t})^{\otimes j}}^{p} \bigr] \Biggr)^{{1}/{p}}\quad \mbox{and}
\nonumber
\\[-8pt]
\label{eq:def-conditional-sobolev}
\\[-8pt]
\nonumber
\Gamma_{F,t}&=& \bigl(\bigl\langle F^i, F^j\bigr
\rangle_{L^{2}_{t}} \bigr)_{1\leq i, j\leq d},
\end{eqnarray}
where we have set $L^{2}_{t}\equiv L^{2}([t,1])$.

With this notation in hand, we give a conditional version of the
integration by parts formula with respect to the Wiener process $W$,
borrowed from \cite{Nu06}, Proposition~2.1.4.

%pr2.2 #&#
\begin{proposition} \label{prop:int-parts-cond}
Fix $n\geq1$. Let $F, Z_s, G\in({D}^{\infty})^d$ be three
random vectors where $Z_s$ is $\mathcal{F}_s$-measurable and
$(\det_{{\Gamma}_{F+Z_s}})^{-1}$ has finite moments of
all orders. Let $g\in\mathcal{C}_p^\infty(\mathbb{R}^d)$. Then, for any
multi-index
$\alpha=(\alpha_1, \ldots,  \alpha_n)\in\{1, \ldots, d\}^n$,
there exists a r.v. ${H}^s_\alpha(F,G)\in\bigcap_{p\geq
1}\bigcap_{m\geq0} {D}^{m,p}$ such that
%
%e15 #&#
\begin{equation}
\label{eq:int-parts-cond}
\mathbf{E} \bigl[(\partial_\alpha g)
(F+Z_s) G|\mathcal{F}_s \bigr] = \mathbf{E}
\bigl[g(F+Z_s) {H}_\alpha^s(F,G)|
\mathcal{F}_s \bigr],
\end{equation}
where ${H}_\alpha^s(F,G)$ is
recursively defined by
\begin{eqnarray*}
{H}_{(i)}^s(F,G)&=& \sum_{j=1}^d
{\delta}_s \bigl( G \bigl({{\Gamma}}_{F,s}^{-1}
\bigr)_{ij} D F^j \bigr),\\
{H}_{\alpha}^s(F,G) &=& {H}^s_{(\alpha_n)}
\bigl(F,{H}^s_{(\alpha_1, \ldots,
\alpha_{n-1})}(F,G)\bigr).
\end{eqnarray*}
Here ${\delta}_s$ denotes the Skorohod integral with respect
to the Wiener process $W$ on the interval $[s,1]$.
Furthermore, the following norm estimates with $\frac{1}p=\frac{1}{q_1}+\frac{1}{q_2}+\frac{1}{q_3}$ hold true:
\[
\bigl\| {H}_{\alpha}^s(F,G)\bigr\|_{p,s}\le c \bigl\|\det(\Gamma_{F,s})^{-1}\bigr\|^{n}_{2^{n-1}q_1,s}\|F\|_{n+2,2^{n}q_2,s}^{2(dn+1)} \|G\|_{n,q_3,s}.
\]
\end{proposition}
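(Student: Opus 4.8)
The plan is to transcribe the classical Malliavin integration by parts formula, as in \cite[Proposition 2.1.4]{Nu06}, to the conditional setting: we replace the operators $\mathrm D,\delta$ on $[0,1]$ by the operators $\mathrm D,\delta_s$ associated with the interval $[s,1]$, and all expectations by $\be_s[\,\cdot\,]=\be[\,\cdot\,|\cf_s]$. What legitimizes this is that $W$ has independent increments, so that, conditionally on $\cf_s=\si\{W_u;\ 0\le u\le s\}$, the increment process $(W_u-W_s)_{u\ge s}$ is again a Wiener process. Consequently one has at one's disposal the conditional chain rule, the conditional duality $\be_s[\langle \mathrm D\Phi,h\rangle_{L^2_s}]=\be_s[\Phi\,\delta_s(h)]$ for $h$ supported on $[s,1]$ (which follows from the unconditional duality together with the fact that $\mathrm D_r\Psi=0$ for $r\ge s$ whenever $\Psi\in\cf_s$), and the conditional continuity estimate $\|\delta_s(h)\|_{p,s}\le c\,\|h\|_{1,p,s}$. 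Note also that $\mathrm D_r Z_s=0$ for $r\in[s,1]$, so the conditional Malliavin covariance matrix of $F+Z_s$ on $[s,1]$ coincides with $\gga_{F,s}$; hence the nondegeneracy hypothesis amounts precisely to saying that $(\det\gga_{F,s})^{-1}$ has finite conditional moments of all orders.

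First I would treat the case $n=1$. Fix $g\in\cac_p^\infty(\mr^d)$ and $k\in\{1,\dots,d\}$. By the conditional chain rule, for $r\in[s,1]$,
\[
\mathrm D_r\big(g(F+Z_s)\big)=\sum_{j=1}^d(\partial_j g)(F+Z_s)\,\mathrm D_r F^j .
\]
Taking the $L^2_s=L^2([s,1])$ inner product against $\mathrm D F^i$ produces $\sum_j(\partial_j g)(F+Z_s)\,(\gga_{F,s})_{ji}$; contracting with $(\gga_{F,s}^{-1})_{ik}$ and summing over $i$ therefore recovers
\[
(\partial_k g)(F+Z_s)=\sum_{i=1}^d\Big\langle \mathrm D\big(g(F+Z_s)\big),\ (\gga_{F,s}^{-1})_{ik}\,\mathrm D F^i\Big\rangle_{L^2_s}.
\]
Multiplying by $G$, taking $\be_s$ and applying the conditional duality to $h=\sum_i G\,(\gga_{F,s}^{-1})_{ik}\,\mathrm D F^i$ yields \eqref{eq:int-parts-cond} for $n=1$, with $H^s_{(k)}(F,G)=\sum_i\delta_s\big(G\,(\gga_{F,s}^{-1})_{ik}\,\mathrm D F^i\big)$. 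The general multi-index case follows by induction on $n$: writing $\partial_\alpha=\partial_{(\alpha_1,\dots,\alpha_{n-1})}\partial_{\alpha_n}$, one applies the inductive hypothesis to the function $\partial_{\alpha_n}g$ with weight $G$, and then applies the first-order formula to $g$ with the new weight $H^s_{(\alpha_1,\dots,\alpha_{n-1})}(F,G)$; this reproduces the recursion $H^s_\alpha(F,G)=H^s_{(\alpha_n)}\big(F,\,H^s_{(\alpha_1,\dots,\alpha_{n-1})}(F,G)\big)$. At each step one checks that the weights produced remain in $\cap_{p\ge 1}\cap_{m\ge 0}D^{m,p}$, which is routine given the smoothness of $F,G$ and the conditional integrability of $(\det\gga_{F,s})^{-1}$.

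For the norm bound I would iterate the estimate for a single integration by parts. Starting from $H^s_{(k)}(F,G)=\sum_i\delta_s\big(G\,(\gga_{F,s}^{-1})_{ik}\,\mathrm D F^i\big)$, the continuity $\|\delta_s(\cdot)\|_{p,s}\le c\|\cdot\|_{1,p,s}$ reduces matters to bounding $\|G\,(\gga_{F,s}^{-1})_{ik}\,\mathrm D F^i\|_{p,s}$ and its $\mathrm D$-norm. Expanding the latter by Leibniz, using Cramer's rule $(\gga_{F,s}^{-1})_{ik}=(\det\gga_{F,s})^{-1}(\mathrm{adj}\,\gga_{F,s})_{ik}$ together with $\mathrm D(\det\gga_{F,s})^{-1}=-(\det\gga_{F,s})^{-2}\,\mathrm D(\det\gga_{F,s})$, and observing that the entries of $\gga_{F,s}$ and of $\mathrm{adj}\,\gga_{F,s}$ are polynomials in the $\langle \mathrm D F^i,\mathrm D F^j\rangle_{L^2_s}$, one bounds everything, via Hölder's inequality with $\tfrac1p=\tfrac1{q_1}+\tfrac1{q_2}+\tfrac1{q_3}$, in terms of conditional norms of $\det(\gga_{F,s})^{-1}$, of $F$ and of $G$. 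Performing this $n$ times, each pass contributing one extra factor $\det(\gga_{F,s})^{-1}$, a bounded power of a higher-order conditional norm of $F$, and a doubling of the integrability exponents through Hölder, yields the stated estimate with the exponents $2^{n-1}q_1$, $2^nq_2$ and the powers $n$ and $2(dn+1)$.

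The step I expect to require genuine care --- as opposed to mechanical bookkeeping --- is the setting up of the conditional calculus itself, namely establishing that $\|\cdot\|_{k,p,t}$, $\delta_s$ and the conditional Malliavin matrix obey the conditional analogues of the chain rule, the duality relation and Meyer's inequalities, uniformly in the conditioning on $\cf_s$. As indicated above, this rests on the independent-increments structure of $W$ (so that the future of $W$ is, given $\cf_s$, a genuine Wiener process on $[s,1]$), together with the identity $\mathrm D Z_s=0$ on $[s,1]$, which is what makes the conditional covariance matrix of $F+Z_s$ equal $\gga_{F,s}$ and lets the hypothesis deliver finite conditional moments of $(\det\gga_{F,s})^{-1}$. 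Once these conditional building blocks are in place, the remainder is a routine transcription of the classical Malliavin integration-by-parts machinery of \cite[Chapters 1--2]{Nu06}.
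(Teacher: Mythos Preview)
Your proposal is correct and is precisely the natural argument. Note, however, that the paper does not actually give a proof of this proposition: it is stated as a conditional version of the integration by parts formula ``borrowed from \cite[Proposition 2.1.4]{Nu06}'', with no further details. Your write-up therefore supplies exactly the adaptation the paper leaves implicit --- transcribing the chain rule, duality, and Meyer-type inequalities to the conditional setting via the independent-increments property of $W$, and then iterating to obtain the norm estimate --- and there is nothing to compare against.
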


We will also resort to a localized version of the above bounds.
Namely, we introduce a family of functions $\Phi_{M,\epsilon}\dvtx \mathbb
{R}_{+}\to
\mathbb{R}_{+}$ indexed by $M,\epsilon>0$, which
are regularizations of $\mathbf{1}_{\{x\leq M\}}$. Specifically, we
define a function $\phi_\epsilon=\epsilon^{-1}\phi \dvtx \mathbb{R}\to
\mathbb{R}$ with
\[
\phi(x):= c_{\phi} \exp \biggl(-\frac{1}{1-x^2} \biggr)
\mathbf{1}_{\{|x|<1\}},
\]
where $c_{\phi}$ is a normalization constant chosen in order to
have $\int_{\mathbb{R}} \phi(x)  \,dx=1$. Then we define
%
%e16 #&#
\begin{equation}
\label{eq:def-Phi}
\Phi_{M,\epsilon}(z):=1-\int_{-\infty}^z
\phi_\epsilon ({x-M} )\,dx.
\end{equation}
It is then
{readily checked that $\Phi_{M,\epsilon}(z)=0$ for $z>M+\epsilon$,
$\Phi_{M,\epsilon}(z)=1$ on $[0,  M-\epsilon]$} and $\Phi_{M,\epsilon
}\in
C^{\infty}_{b}$.
We will use the above localization function in two situations: one for
$M\gg 1$, $\epsilon=1$, and
in this case we simplify the notation using $\Phi_M\equiv\Phi_{M,1}$.
In a second case $M$ will not be a large quantity and therefore we will
have to choose $\epsilon$ accordingly.

Consider now $Z\in{D}^\infty$. Under the same conditions as for
Proposition~\ref{prop:int-parts-cond}, we get a conditional integration
by parts formula of form (\ref{eq:int-parts-cond}) localized by $Z$,
with the following modification on the estimation of
the norms of ${H}_{\alpha}^s$:
%
%e17 #&#
\begin{eqnarray}
&&\quad \bigl\| {H}_{\alpha}^s\bigl(F,G \Phi_M(Z)\bigr)\bigr\|_{p,s}
\nonumber
\\[-8pt]
\label{Hub}\\[-8pt]
\nonumber
&& \quad\qquad \le c \bigl\|\det({\Gamma}_{F,s})^{-1}
\Phi_{M'}(Z)\bigr\|_{p_3,s}^{k_3} \bigl\|F
\Phi_{M'}(Z) \bigr\|_{k_2,p_2,s}^{k_4}\bigl\|G
\Phi_{M'}(Z)\bigr\|_{k_1,p_1,s},
\end{eqnarray}
for some appropriate positive integers $k_1,p_1,k_2,p_2,k_3,p_3,k_4$,
and where we recall our convention on increasing constants $M'>M$. In
fact, to obtain the above inequality is enough to notice that there
exist constants $M'$ and $C$ which may depend on $M$ and $k\in\mathbb
{N}$ such that $\Phi_M(Z)\le C\Phi_{M'}(Z)^k$ and $|\partial^k_z\Phi
_M(Z)|\le C\Phi_{M'}(Z)$. Notice that (\ref{Hub}) is valid for
localizations of the form $\Phi_{M,\epsilon}(Z)$ as well.

%s2.2 #&#
\subsection{Differential equations driven by fBm}

Recall that $X$ is the solution of
(\ref{eq:sde-intro}), and that our working assumptions are
summarized in Hypothesis~\ref{hyp:vector-fields}. We have
distinguished 3 situations:
\begin{longlist}[(3)]
\item[(1)]
The one-dimensional additive case, for which equation
(\ref{eq:sde-intro}) can be reduced to an ordinary differential
equation by considering the process $Z=X-B$.
\item[(2)]
The one-dimensional multiplicative case, handled thanks to the
Doss--Sussman transform; see, for example, \cite{NSi}.
\item[(3)]
The multidimensional case with $H\in(1/2,1)$, solved in a
pathwise way by interpreting stochastic integrals as generalized
Riemann--Stieljes-type integrals.
\end{longlist}
In this section we give a brief account on the known results in
the last situation.

In the case $H\in(1/2,1)$, (\ref{eq:sde-intro}) is solved thanks to a fixed
point argument, after interpreting the stochastic integral in
the (pathwise) Young sense; see, for example, \cite{Gu}. Let us recall
that Young's integral can be defined in the following way:

%pr2.3 #&#
\begin{proposition}\label{prop:young-intg}
Let $f\in\mathcal{C}^\gamma$, $g\in\mathcal{C}^\kappa$ with
$\gamma+\kappa>1$, and
$0\le s\le t\le1$. Then the integral $\int_s^t g_\xi  \,df_\xi$
is well defined as a Riemann--Stieltjes integral. Moreover, the
following estimation is fulfilled:
\[
\biggl\llvert \int_s^t g_\xi
\,df_\xi\biggr\rrvert \leq C \|f\|_\gamma \|g
\|_\kappa|t-s|^\gamma, %\label{eq:ineq-young}
\]
where the constant $C$ only depends on $\gamma$ and $\kappa$.
\end{proposition}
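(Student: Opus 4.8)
The plan is to prove Proposition~\ref{prop:young-intg} by Young's original partition-refinement argument, which simultaneously produces the integral as a limit of Riemann--Stieltjes sums and yields the stated bound as a by-product. First I would fix $0\le s\le t\le 1$ and, for a partition $\pi=\{s=u_0<u_1<\cdots<u_N=t\}$ of $[s,t]$, consider the Riemann sum $S_\pi=\sum_{i=0}^{N-1}g_{u_i}(f_{u_{i+1}}-f_{u_i})$. The core estimate comes from a point-removal procedure: if $N\ge 2$, a pigeonhole argument (the lengths $u_{i+1}-u_{i-1}$, $1\le i\le N-1$, sum to at most $2(t-s)$) yields an index $j$ with $|u_{j+1}-u_{j-1}|\le \tfrac{2}{N-1}(t-s)$, and deleting $u_j$ changes $S_\pi$ by exactly $(g_{u_j}-g_{u_{j-1}})(f_{u_{j+1}}-f_{u_j})$, whose modulus is bounded by $\|g\|_\ka\|f\|_\ga |u_{j+1}-u_{j-1}|^{\ga+\ka}\le \|g\|_\ka\|f\|_\ga\big(\tfrac{2(t-s)}{N-1}\big)^{\ga+\ka}$. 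Iterating the deletion down to the trivial partition $\{s,t\}$ and summing these errors --- and this is precisely the step that uses $\ga+\ka>1$, since it forces $\sum_{n\ge1}n^{-(\ga+\ka)}<\infty$ --- I would obtain the uniform bound $\big|S_\pi-g_s(f_t-f_s)\big|\le C_{\ga,\ka}\,\|f\|_\ga\,\|g\|_\ka\,(t-s)^{\ga+\ka}$ with $C_{\ga,\ka}=2^{\ga+\ka}\zeta(\ga+\ka)$.

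Next I would establish convergence of the Riemann sums. Given two partitions $\pi_1,\pi_2$ of $[s,t]$, I would pass to their common refinement $\pi$ and apply the previous bound on each subinterval $[u_i,u_{i+1}]$ of $\pi_1$ to the points of $\pi$ lying inside it; summing over $i$ gives $|S_\pi-S_{\pi_1}|\le C_{\ga,\ka}\|f\|_\ga\|g\|_\ka\sum_i|u_{i+1}-u_i|^{\ga+\ka}\le C_{\ga,\ka}\|f\|_\ga\|g\|_\ka\,|\pi_1|^{\ga+\ka-1}(t-s)$, where $|\pi_1|$ denotes the mesh, with the analogous estimate for $\pi_2$. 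Hence $(S_\pi)$ is Cauchy as the mesh tends to $0$, so the Riemann--Stieltjes integral $\ist g_\xi\,df_\xi$ exists as the common limit.

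Finally, for the quantitative estimate I would let the mesh go to $0$ in the uniform bound of the first step, obtaining $\big|\ist g_\xi\,df_\xi-g_s(f_t-f_s)\big|\le C_{\ga,\ka}\|f\|_\ga\|g\|_\ka(t-s)^{\ga+\ka}$. Since $t-s\le 1$ one has $(t-s)^{\ga+\ka}\le(t-s)^{\ga}$, and bounding the boundary contribution $|g_s|\,|f_t-f_s|$ by $\|g\|_\infty\|f\|_\ga|t-s|^{\ga}$ (with $\|g\|_\infty$ absorbed into the H\"older norm, as is the convention in this setting) yields the announced inequality $\big|\ist g_\xi\,df_\xi\big|\le C\|f\|_\ga\|g\|_\ka|t-s|^{\ga}$ with $C$ depending only on $\ga$ and $\ka$.

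I expect the main obstacle to be the bookkeeping in the iterated point-removal step: one must apply the pigeonhole estimate to the \emph{current} partition at each stage (the bound on the deleted gap improves as the partition shrinks), and verify that the accumulated error series is genuinely summable, which is exactly where the hypothesis $\ga+\ka>1$ enters. The remaining ingredients --- the Cauchy property via common refinements and the passage to the limit --- are routine. As an alternative one could invoke the sewing (Gubinelli) lemma to package the first two steps, but the hands-on argument above is self-contained and keeps the dependence of $C$ on $\ga,\ka$ transparent.
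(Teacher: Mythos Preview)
The paper does not prove this proposition; it is stated as a recall of a classical fact, with an implicit reference to \cite{Gu} for the surrounding theory. Your argument via Young's point-removal (pigeonhole) method is correct and is in fact the original proof; the alternative you mention at the end --- packaging the first two steps through the sewing lemma --- is precisely the approach of the cited reference \cite{Gu}.

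One small caveat on the final step: the paper's notation section defines $\|g\|_\kappa$ as the H\"older \emph{seminorm} only, so strictly speaking the boundary term $|g_s|\,|f_t-f_s|$ cannot be absorbed into $\|g\|_\kappa$ (the stated inequality is vacuously false for nonzero constant $g$). The intended reading is the full H\"older norm, or equivalently a bound of the shape $C(|g_s|+\|g\|_\kappa)\|f\|_\ga|t-s|^\ga$; your parenthetical remark already flags this, so no change to your argument is needed.
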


With this definition in mind and under
Hypothesis~\ref{hyp:vector-fields}, we can solve (\ref{eq:sde-intro})
uniquely, in the Young sense. Specifically, it is proven in
\cite{NR} that equation (\ref{eq:sde-intro}) driven by $B$
admits a unique $\gamma$-H\"older continuous solution $X$, for
any $\frac{1}2<\gamma<H$. Moreover, the following moments bounds
are shown in \cite{HN}:

%pr2.4 #&#
\begin{proposition}\label{prop:moments-sdes}
Let $H\in
(1/2,1)$, and assume that $V_0,\ldots,V_d$ satisfy
Hypothesis \ref{hyp:vector-fields}. Then for $t\in[0,1]$ and
$\frac{1}2<\gamma<H$, we have
%
%e18 #&#
\begin{equation}
\label{eq:bnd-moments-X-t}
\|X\|_{0,t,\infty} \le|a|+ c_{V} \|B
\|_{0,t,\gamma}^{1/\gamma},
\end{equation}
{where we have set $\|X\|_{0,t,\infty}:=\sup\{|X_{s}| ;   0 \le s
\le
t\}$ and where we recall that $\|B\|_{0,t,\gamma}$ is defined by (\ref
{eq:def-holder-norms}).}
Moreover $X_{t}\in\mathbf{D}^{\infty}$ and for $n\ge1$, $\mathbf
{i}=(i_{1},\ldots,i_{n})\in\{1,\ldots,d\}^{n}$ and $0\le s < t \le1$
the following bound
holds true:
%
%e19 #&#
\begin{equation}
\label{eq:bnd-moments-D-n-X-t}
\sup_{s\leq u, r_1,\ldots, r_n\leq t} \bigl|\mathbf{D}_{\mathbf
{r}}^{\mathbf{i}}
X_u\bigr| \le C_{V,n} \exp \bigl(c_{V,n} \|B
\|_{s,t,\gamma}^{1/\gamma} \bigr).
\end{equation}
\end{proposition}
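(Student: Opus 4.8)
The bound \eqref{eq:bnd-moments-X-t} is the standard a priori estimate for Young differential equations, so I would dispatch it first. Writing the equation in integral form on $[0,t]$ and applying the Young estimate of Proposition~\ref{prop:young-intg} together with the boundedness of $V_0,\ldots,V_d$ (Hypothesis~\ref{hyp:vector-fields}(2)), one gets for $0\le s<t\le 1$ an inequality of the form $\|X\|_{s,t,\gamma}\le c_V(1+\|B\|_{s,t,\gamma})(1+\|X\|_{s,t,\infty})$, and hence, on a small interval of length $\Delta$ with $c_V(1+\|B\|_{s,t,\gamma})\Delta^{\gamma}\le 1/2$, one controls the oscillation of $X$ by a constant. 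Chaining over $\lfloor (\|B\|_{0,t,\gamma})^{1/\gamma}\rfloor+1$ such intervals (the classical greedy-partition / Gr\"onwall-type argument for rough or Young equations, see e.g.\ \cite{HN,NR}) produces $\|X\|_{0,t,\infty}\le |a|+c_V\|B\|_{0,t,\gamma}^{1/\gamma}$. I would simply cite \cite{HN} for this, since the statement is quoted from there.

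For the Malliavin differentiability $X_t\in\pmb{D}^\infty$ and the derivative bound \eqref{eq:bnd-moments-D-n-X-t}, the plan is an induction on the order $n$ of differentiation. For $n=1$: differentiating \eqref{eq:sde-intro} formally in the direction of $\mathcal H$ shows that $Y_u^{i}:=\bd^{i}_r X_u$ (for fixed $r$, $i$) solves the linear Young equation
\begin{equation*}
Y_u^{i}= V_i(X_r)\1_{\{r\le u\}}+\int_r^u \nabla V_0(X_v)\,Y_v^{i}\,dv+\sum_{k=1}^d\int_r^u \nabla V_k(X_v)\,Y_v^{i}\,dB^k_v .
\end{equation*}
Since $\nabla V_k\in\cac_b^\infty$, the a priori estimate for linear Young equations gives a bound on $\sup_{r\le u\le t}|Y_u^{i}|$ of the form $C_V\exp(c_V\|B\|_{r,t,\gamma}^{1/\gamma})$ — exactly the exponential dependence claimed, coming again from chaining the estimate over $\asymp\|B\|_{r,t,\gamma}^{1/\gamma}$ subintervals, with the linear structure turning the per-interval constant factor $\le 2$ into an exponential after chaining. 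Existence of the derivative (not merely the bound) follows from the standard argument: the solution map $B\mapsto X$ for Young equations with smooth bounded coefficients is Fr\'echet (indeed $\cac^\infty$) differentiable in the appropriate H\"older topology, $\mathcal H$ embeds continuously in the relevant path space for $H>1/2$, and the $L^p(\Omega)$ integrability needed to conclude $X_t\in\pmb{D}^{1,p}$ for all $p$ comes from \eqref{eq:bnd-moments-X-t} and the Gaussian integrability of $\|B\|_{\gamma}$ (Fernique's theorem), which makes $\exp(c_V\|B\|_{\gamma}^{1/\gamma})$ integrable for $H>1/2$ since $1/\gamma<2$.

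For the inductive step, differentiating the linear equation for the $(n-1)$-st derivative once more produces a linear Young equation for $\bd^{n}_{r_1\ldots r_n}X_u$ whose inhomogeneous term is a universal polynomial in lower-order derivatives $\bd^{j}X$ ($j<n$) multiplied by bounded derivatives $\nabla^{\ell}V_k(X)$; by the induction hypothesis each such lower-order term is already bounded by $C_{V,n-1}\exp(c_{V,n-1}\|B\|_{s,t,\gamma}^{1/\gamma})$, so applying once more the linear a priori estimate (and absorbing finitely many exponentials into one, at the cost of enlarging $c_{V,n}$) yields \eqref{eq:bnd-moments-D-n-X-t}. Taking $L^p$ norms and using Fernique as above gives $X_t\in\pmb{D}^{n,p}$ for all $n,p$, i.e.\ $X_t\in\pmb{D}^\infty$.

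\emph{Main obstacle.} The routine parts are the Young a priori estimates; the delicate point is keeping the \emph{exponential} (rather than doubly-exponential or worse) dependence on $\|B\|_{s,t,\gamma}^{1/\gamma}$ through the induction, and making sure the per-interval constants in the greedy chaining do not accumulate faster than geometrically. Concretely one must check that differentiating the equation does not increase the "H\"older budget" needed per subinterval, so that the same partition (of cardinality $\asymp\|B\|_{s,t,\gamma}^{1/\gamma}$) works simultaneously for $X$ and all its derivatives up to order $n$; this is what allows the bound to stay of the stated form with constants depending only on $n$ and the vector fields. Since this is precisely the content of the cited estimates in \cite{HN}, I would present the $n=1$ case in detail and indicate the induction, referring to \cite{HN} for the full bookkeeping.
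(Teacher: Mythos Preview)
Your sketch is correct and is indeed the standard argument from \cite{HN}; note however that the paper does not give its own proof of this proposition at all --- it is stated as a quotation of the moment bounds established in \cite{HN}, with only a remark that Proposition~\ref{prop:AN} justifies the measurability of the supremum in~\eqref{eq:bnd-moments-D-n-X-t}. So there is nothing to compare: your plan (Young a~priori estimate via greedy partition for~\eqref{eq:bnd-moments-X-t}, then induction on the derivative order using the linear Young equation satisfied by $\bd^{n}X$ together with Fernique for integrability) is exactly the content of the cited reference, and citing \cite{HN} as you propose is precisely what the paper does.
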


We remark that $\mathbf{D}_{r_1,\ldots,r_n}^{i_1,\ldots, i_n} X_u$ is a continuous function except if $r_i=u$
for some $i$, where it is c\`adl\`ag, and therefore the above supremum
is well defined.

Furthermore, a bound for $\gamma$-H\"older norms with
$\frac{1}2<\gamma<H$ is provided in \cite{FV-bk}, equation (10.15), for
$X$ together with its Malliavin derivatives:

%pr2.5 #&#
\begin{proposition} \label{prop:cotes-hol-sol}
Under the same assumptions as for Proposition~\ref{prop:moments-sdes}, we have
\begin{eqnarray*}
\|X\|_{s,t,\gamma}&\leq &  c_{1,V} \bigl(\|B\|_{s,t,\gamma}\vee \|B
\|_{s,t,\gamma}^{{1}/{\gamma}} \bigr),\\
  \bigl\|\mathbf{D}_{\mathbf{r}}^{\mathbf{i}}
X_u\bigr\|_{s,t,\gamma} &\leq &  c_{2,V,n}\exp
\bigl(c_{3,V,n}\|B\|_{s,t,\gamma}^{{1}/{\gamma}} \bigr).
\end{eqnarray*}
\end{proposition}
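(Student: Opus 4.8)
As indicated in the statement, this is essentially \cite[Eq.~(10.15)]{FV-bk}, so the plan is to exhibit the structure of the argument rather than reproduce every computation. For the bound on $X$ itself I would begin with a \emph{local} estimate. Fix $s\le u<v\le t$; from \eqref{eq:sde-intro},
\[
X_v-X_u=\int_u^v V_0(X_\xi)\,d\xi+\sum_{i=1}^d\int_u^v V_i(X_\xi)\,dB^i_\xi .
\]
The drift is bounded by $\|V_0\|_\infty|v-u|$, and since $V_i\in\cac_b^\infty$ one has $\|V_i(X_\cdot)\|_{u,v,\gamma}\le\|V_i'\|_\infty\|X\|_{u,v,\gamma}$, so Proposition~\ref{prop:young-intg} (used in its standard form, with the boundary term $V_i(X_u)(B^i_v-B^i_u)$ split off) gives
\[
\|X\|_{u,v,\gamma}\le c_V\Big(1+\|B\|_{u,v,\gamma}+\|B\|_{u,v,\gamma}\,\|X\|_{u,v,\gamma}\,|v-u|^\gamma\Big).
\]
Hence on any subinterval with $c_V\|B\|_{s,t,\gamma}|v-u|^\gamma\le\tfrac12$ the last term is absorbed and $\|X\|_{u,v,\gamma}\le c_V(1+\|B\|_{s,t,\gamma})$.

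The heart of the matter is then to globalise this local control. I would introduce a greedy partition $s=\tau_0<\tau_1<\cdots<\tau_N=t$ in which each cell has the maximal length compatible with $c_V\|B\|_{s,t,\gamma}(\tau_{k+1}-\tau_k)^\gamma\le\tfrac12$; since $t-s\le1$, this forces $N\le c_V\big(1\vee\|B\|_{s,t,\gamma}^{1/\gamma}\big)$. For arbitrary $u<v$ in $[s,t]$ one splits $[u,v]$ along the $\tau_k$ it meets into at most $N+1$ pieces, bounds $|X_v-X_u|$ by the sum of the corresponding increments, and applies the power‑mean inequality $\sum_k a_k^\gamma\le(N+1)^{1-\gamma}\big(\sum_k a_k\big)^\gamma$ to obtain $\|X\|_{s,t,\gamma}\le c_V(N+1)^{1-\gamma}(1+\|B\|_{s,t,\gamma})$. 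Inserting the bound on $N$ and separating the two size regimes for $\|B\|_{s,t,\gamma}$ yields $\|X\|_{s,t,\gamma}\le c_{1,V}\big(\|B\|_{s,t,\gamma}\vee\|B\|_{s,t,\gamma}^{1/\gamma}\big)$, which is the first inequality.

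For the Malliavin derivatives I would differentiate the equation. The process $Y^{i,r}_u:=\bd^i_r X_u$ (for $u\ge r$) solves the \emph{linear} Young equation
\[
Y^{i,r}_u=V_i(X_r)+\int_r^u V_0'(X_\xi)\,Y^{i,r}_\xi\,d\xi+\sum_{j=1}^d\int_r^u V_j'(X_\xi)\,Y^{i,r}_\xi\,dB^j_\xi ,
\]
whose coefficients have $\gamma$-Hölder norms already controlled by the first part of the proposition (together with $|V_i(X_r)|\le\|V_i\|_\infty$ and Proposition~\ref{prop:moments-sdes}). Rerunning the localisation argument for this linear equation, one loses a multiplicative constant $c_V$ on each cell instead of an additive one, so patching over the $N\le c_V(1\vee\|B\|_{s,t,\gamma}^{1/\gamma})$ cells produces a factor $c_V^{\,N}\le\exp(c_V N)$, i.e. $\|\bd^i_r X\|_{s,t,\gamma}\le c_{2,V}\exp\big(c_{3,V}\|B\|_{s,t,\gamma}^{1/\gamma}\big)$. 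For $n\ge2$ one argues by induction: $\bd^n_{r_1\ldots r_n}X$ satisfies a linear Young equation with the same homogeneous part and an inhomogeneous term that is a universal polynomial in the quantities $V_k^{(\ell)}(X)$ with $\ell\le n$ and in the lower-order derivatives $\bd^{m}X$ with $m\le n-1$; by the induction hypothesis and Proposition~\ref{prop:moments-sdes} this inhomogeneity already obeys an $\exp\big(c_V\|B\|_{s,t,\gamma}^{1/\gamma}\big)$ bound in $\gamma$-Hölder norm, and solving the linear equation preserves a bound of this type with enlarged constants $c_{2,V,n},c_{3,V,n}$. This closes the induction.

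The only genuinely delicate point is the passage from the local to the global estimates: one must verify that the greedy partition has $O\big(\|B\|_{s,t,\gamma}^{1/\gamma}\big)$ cells and that the exponent $1-\gamma$ lost in the reassembly combines with this count to reproduce exactly the power $1/\gamma$ of the statement — and, for the linear equations governing the Malliavin derivatives, that the same count turns the per‑cell multiplicative blow‑up into the claimed exponential factor. These bookkeeping computations are carried out in detail in \cite{FV-bk}, to which we refer for the remainder of the proof.
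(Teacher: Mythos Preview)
The paper does not actually supply a proof of this proposition: it simply records the two inequalities and attributes them to \cite[Eq.~(10.15)]{FV-bk}. Your proposal therefore goes beyond what the paper does, but it is a faithful and correct sketch of the argument behind that citation --- the Davie/Friz--Victoir scheme of a local Young estimate, a greedy partition whose size is $O(\|B\|_{s,t,\gamma}^{1/\gamma})$, and the corresponding additive (for $X$) versus multiplicative (for the linear equations satisfied by $\bd^n X$) reassembly. Nothing needs to be changed; if anything, you could note that the constant term coming from the drift $V_0$ in your local bound means the first inequality is really of the form $c_{1,V}\big(1+\|B\|_{s,t,\gamma}\vee\|B\|_{s,t,\gamma}^{1/\gamma}\big)$, which is how it is effectively used later in the paper (and is harmless since the applications localise $\|B\|_{s,t,\gamma}$ anyway).
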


%re2.6 #&#
\begin{remark}\label{rmk:young-strato}
Assume $H>1/2$ and the other hypothesis of Proposition~\ref{prop:moments-sdes} again.
As\vspace*{1pt} mentioned, for example, in \cite{Co}, Section~7, the Young-type
integrals $\int_0^t V_i (X_s) \,dB^i_s$ in (\ref{eq:sde-intro}) coincide
with the Russo--Vallois definition of integral and also with the
Stratonovich integral of Malliavin calculus. We shall use these
identifications later on, and they will be detailed in Section~\ref{sec:fractional-strato-eq}. For the time being, let us just stress the
following fact: in order to harmonize notation, we shall often write
$\int_0^t V_i (X_s) \circ dB^i_s$ for the Young integral (instead of
$\int_0^t V_i (X_s) \,dB^i_s$), in order to recall that it can also be
interpreted in the Stratonovich sense.
\end{remark}

%s3 #&#
\section{One-dimensional additive case}
\label{sec:one-dim-additive}

This section is devoted to prove our main Theorem~\ref{thm:low-bnd-intro} in the particular case $m=d=1$ with
additive noise. In this context, one can take advantage of the
results obtained by Nourdin and Viens in \cite{NV} in order to
derive Gaussian-type upper and lower bounds for $p_t$. Let us
then first recall what those results are.

%s3.1 #&#
\subsection{General bounds on densities of one-dimensional
random variables}

Recall that we denote the Malliavin--Sobolev spaces with respect to the
fBm $B$ by $\mathbf{D}^{k,p}$, and consider a real-valued centered random variable
$F\in\mathbf{D}^{1,2}$. We define a function $g$ on $\mathbb{R}$ by
\[
g(z):=\mathbf{E} \bigl[ \bigl\langle\mathbf{D} F,-\mathbf{D}
\mathbf{L}^{-1}F \bigr\rangle_\mathcal{H}|F=z \bigr],
\]
where the operator $\mathbf{L}$ is the Ornstein--Uhlenbeck operator
associated to the fBm~$B$ (see \cite{Nu06} for further details),
which can be defined using the chaos expansion by the formula
$\mathbf{L}=-\sum_{n=0}^\infty n\mathbf{J}_n$.
Based on the function $g$, the following simple criterion for
Gaussian-type bounds has been obtained in \cite{NV}:

%pr3.1 #&#
\begin{proposition}\label{prop:criterion-nourdin-viens}
Let $F\in\mathbf{D}^{1,2}$ with $\mathbf{E}[F]=0$.
If there exist $c_1$, $c_2 >0$ such that
%
%e20 #&#
\begin{equation}
\label{eq:bnd-gF}
c_1\leq g(F)\leq c_2, \qquad \mathbf{P}\mbox{-a.s.},
\end{equation}
then the law of $F$ has a density $\rho$ satisfying, for almost
all $z\in\mathbb{R}$,
\[
\frac{\mathbf{E}[|F|]}{2c_2} \exp \biggl(-\frac{z^2}{2c_1} \biggr)\leq \rho(z)\leq
\frac{\mathbf{E}[|F|]}{2c_1} \exp \biggl(-\frac{z^2}{2c_2} \biggr).
\]
\end{proposition}

Interestingly enough, Nourdin and Viens \cite{NV}, Proposition 3.7, also
give an
alternative formula for $g(F)$ which is suitable for
computational purposes. Indeed, if we write $\mathbf{D}
F=\Phi_F(B)$ in the above Proposition, where $\Phi_F\dvtx \mathbb
{R}^\mathcal{H}\rightarrow
\mathcal{H}$ is a
measurable mapping, then the following relation holds true:
%
%e21 #&#
\begin{equation}
\label{eq:def-g-F}
g(F)=\int_0^\infty
e^{-\theta} \mathbf{E} \bigl[ \bigl\langle \Phi_F(B),
\Phi_F\bigl(e^{-\theta} B+\sqrt{1-e^{-2\theta}}B'
\bigr) \bigr\rangle_\mathcal{H} | F \bigr]\,d\theta,
\end{equation}
where $B'$ stands for an independent copy of $B$, and is such
that $B$ and $B'$ are defined on the product probability space
$(\Omega\times\Omega',\mathcal{F}\otimes\mathcal{F}',\mathbf
{P}\times\mathbf{P}')$. Here we
abuse the notation by letting $\mathbf{E}$ be the mathematical
expectation with respect to $\mathbf{P}\times\mathbf{P}'$, while
$\mathbf{E}'$ is
the mathematical expectation with respect to $\mathbf{P}'$ only.
One can thus recast relation (\ref{eq:def-g-F}) as
%
%e22 #&#
\begin{equation}
\label{eq:g-F}
g(F)=\int_0^\infty \mathbf{E}
\bigl[\mathbf{E}' \bigl[ \bigl\langle\mathbf{D} F,{\mathbf{D}
{F^\theta}} \bigr\rangle_\mathcal{H} \bigr] | F \bigr]\,d\theta,
\end{equation}
where, for any random variable $X$ defined in
$(\Omega,\mathcal{F},\mathbf{P})$, ${X^\theta}$ denotes the
following shifted
random variable in $\Omega\times\Omega'$:
\[
{X^\theta}\bigl(\omega,\omega'\bigr)=X \bigl(
e^{-\theta}\omega+\sqrt{1-e^{-2\theta}}\omega' \bigr),\qquad
\omega\in\Omega, \omega'\in\Omega'.
\]

%s3.2 #&#
\subsection{Main result in the additive one-dimensional case}
Before stating our result let us point out that we assume throughout
this subsection $V_1\equiv\sigma$. That is, $X$ is the solution of
%
%e23 #&#
\begin{equation}
\label{eq:sde-additive}
X_t =x +\int_0^t
V_{0}(X_s)\,ds+ \sigma B_t, \qquad t\in[0,1],
\end{equation}
where $\sigma>0$ is a strictly positive constant, $V_{0}$
satisfies $\llVert V_{0}'\rrVert _\infty\leq M$ for some constant
$M>0$ and $B$ is a fBm with
$H\in(0,1)$. Under this setting, we are able to get the
following bounds:

%th3.2 #&#
\begin{theorem}\label{thm:up-low-bnd-additive-case}
Assume that $V_{0}$ satisfies that
$\llVert V_{0}'\rrVert _\infty\leq M$, for some constant $M>0$,
$\sigma>0$ and $H\in(0,1)$. Then, for all $t\in(0,1]$, $X_t$
possesses a
density $p_t$, and there exist some strictly positive
constants $c_1<c_{3}$ and $c_2<c_{4}$ depending only on $M$ and $H$ such
that for all $z\in\mathbb{R}$,
%
%e24 #&#
\begin{equation}
\label{eq:final-up-low-bnd-additive-case}
\frac{c_{1}}{\sigma t^{H}} {\exp \biggl(-\frac{(z-m)^2}{c_{2}\sigma^2t^{2H}} \biggr)} \leq
p_t(z)\leq \frac{c_{3}}{\sigma t^{H}} \exp \biggl(-\frac{(z-m)^2}{c_{4}\sigma^2t^{2H}}
\biggr).
\end{equation}
\end{theorem}

%re3.3 #&#
\begin{remark}
The advantage of the Nourdin--Viens method of estimating
densities is that upper and lower bounds are obtained with similar
proofs. The drawback is the restriction to one-dimensional additive
situations. Also notice that the exponents
in equation~(\ref{eq:up-low-bnd-additive-case}) are optimal, meaning
that our density bounds mimic the fBm case. See also Theorem~\ref{th:4.2} for the nonconstant diffusion case.
\end{remark}

\begin{pf*}{Strategy of the proof}
We first notice that we can reduce our problem to prove that
%
%e25 #&#
\begin{equation}
\label{eq:up-low-bnd-additive-case}
\quad\frac{\mathbf{E}[|X_t-m|]}{c_{1}\sigma^2t^{2H}} {\exp \biggl(-\frac{(z-m)^2}{c_{2}\sigma^2t^{2H}} \biggr)} \leq
p_t(z)\leq \frac{\mathbf{E}[|X_t-m|]}{c_{2}\sigma^2t^{2H}} \exp \biggl(-\frac{(z-m)^2}{c_{1}\sigma^2t^{2H}}
\biggr).\hspace*{-12pt}
\end{equation}
Indeed, one can check in our context that $\mathbf{E}[|X_t-m|]\asymp
\sigma
t^{H}$. This easy
step is left to the reader for the sake of conciseness, and it
naturally allows us to go from (\ref{eq:up-low-bnd-additive-case}) to~(\ref{eq:final-up-low-bnd-additive-case}). Now in order to prove (\ref
{eq:up-low-bnd-additive-case}), we obviously rely heavily on Proposition~\ref{prop:criterion-nourdin-viens}.
We thus define
$F=X_t-\mathbf{E}[X_t]$, where $X_t$ is the solution of
(\ref{eq:sde-additive}).
We get a centered random variable, and
we shall prove that there exists two constants $0< K_1 <K_2$ such that
%
%e26 #&#
\begin{equation}
\label{eq:bnd-g-F}
K_1 \sigma^{2} t^{2H} \leq g(F)
\leq K_2 \sigma^{2} t^{2H}.
\end{equation}

Notice first that in the present case, it is easily
seen that for any $t>0$, we have $X_t\in\mathbf{D}^{1,2}$; this is a
particular case of \cite{NS}. Furthermore, the Malliavin
derivative of $X_t$ satisfies the following equation for $r\leq
t$:
\[
\mathbf{D}_r X_t=\int_r^t
V_{0}'(X_s)\mathbf{D}_rX_s\,ds+
\sigma.
\]
This equation can be solved explicitly, and we obtain
%
%e27 #&#
\begin{equation}
\label{derivada}
\mathbf{D}_r X_t=\sigma
e^{\int_r^t V_{0}'(X_s)\,ds}.
\end{equation}
In particular, the bound
%
%e28 #&#
\begin{equation}
\label{eq:low-up-bnd-DX}
\sigma e^{-t M} \le\mathbf{D}_r
X_t \le\sigma e^{t M}
\end{equation}
holds true almost surely for $M=\llVert V_{0}'\rrVert _\infty$.

Observe that we shall bound $g(F)$ thanks to relation (\ref{eq:low-up-bnd-DX}).
More specifically, we will show that for each
$\theta\in\mathbb{R}_{+}$ we have (almost surely)
%
%e29 #&#
\begin{equation}
\label{eq:low-up-bnd-DF-DF-theta}
c_{3} t^{2H} \sigma^{2} \le \bigl
\langle\mathbf{D} F,{\mathbf{D} {F^\theta}} \bigr\rangle
_\mathcal{H} \le c_{4} t^{2H} \sigma^{2},
\end{equation}
for two strictly positive constants $c_{3}<c_{4}$.
This deterministic bound easily yields~(\ref{eq:bnd-gF}) and
thus~(\ref{eq:up-low-bnd-additive-case}). We now separate the cases
$H\in(1/2,1)$
and $H\in(0,1/2)$ in order to get relation~(\ref{eq:low-up-bnd-DF-DF-theta}). Notice that the Brownian case, that is,
$H=1/2$, is well known, and it is thus omitted here for the sake of conciseness.
\end{pf*}

%s3.3 #&#
\subsection{Case $H>\frac{1}{2}$}
Recall that we wish to prove (\ref{eq:low-up-bnd-DF-DF-theta}) thanks
to relation (\ref{eq:low-up-bnd-DX}). Furthermore, owing to expression
(\ref{eq:inner-pdt-H-smooth}) for the inner product in $\mathcal{H}$, we
can write $\langle\mathbf{D} F,{\mathbf{D} {F^\theta}}\rangle
_\mathcal{H}$ as
\begin{eqnarray}
\bigl\langle\mathbf{D} F,{\mathbf{D} {F^\theta}} \bigr\rangle
_\mathcal{H} &=&c_H \int_0^t
\!\!\int_0^t \mathbf{D}_uX_t
\mathbf{D}_v{X^\theta_t}|u-v|^{2H-2}
\,du\,dv
\nonumber
\\[-8pt]
\label{g1}
\\[-8pt]
\nonumber
&=&c_H\sigma^2 \int_0^t
\!\!\int_0^t e^{\int_u^t
V_{0}'(X_s)\,ds}e^{\int_v^t
V_{0}'(X^\theta_s)\,ds}|u-v|^{2H-2}
\,du\,dv.
\nonumber
\end{eqnarray}
Therefore the lower and upper bounds in (\ref{eq:low-up-bnd-DX}) follow
from plugging inequality
(\ref{eq:low-up-bnd-DX}) into relation~(\ref{g1}).

%s3.4 #&#
\subsection{Case $0<H<\frac{1}{2}$}
As\vspace*{1pt} in the case $H> \frac{1}2$, our aim is to prove (\ref{eq:low-up-bnd-DX}). We thus go
back to equation~(\ref{eq:g-F}), and we observe that we can
reduce the problem to the existence of two constants $0<c_1<c_2$
such that
%
%e30 #&#
\begin{equation}
\label{eq:bounds-inner-prod-rough}
c_1t^{2H}\leq \bigl\langle\mathbf{D}
X_t, \mathbf{D} X^\theta_t \bigr\rangle_{\mathcal{H}}\leq c_2t^{2H}.
\end{equation}
The proof of these inequalities will rely on the following
quadratic programming lemma, which is a slight variation of
\cite{CHLT}, Lemma 6.2:

%le3.4 #&#
\begin{lemma}\label{lem:optim-quadratic}
Let $Q\in\mathbb{R}^{n}\otimes\mathbb{R}^n$ be a strictly positive
symmetric matrix such
that $\sum_{j=1}^{n}Q_{ij}\ge0$ for all $i=1,\ldots,n$. For two
positive constants $a$ and $b$, consider the sets
$\mathcal{A}=[a,\infty)^n$ and $\mathcal{B}=[b,\infty)^n$. Then
\[
\inf \bigl\{x^* Q \tilde{x}; \tilde{x}\in\mathcal{A}, x\in \mathcal{B} \bigr\} =
ab \sum_{i,j=1}^{n} Q_{ij} .
\]
\end{lemma}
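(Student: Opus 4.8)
The plan is to prove the lemma by a direct optimization argument, using both the constraint $\sum_j Q_{ij}\ge 0$ and positive‐definiteness of $Q$. First I would observe that the objective $f(x,\tilde x)=x^*Q\tilde x$ is bilinear, and the feasible region $\ca\times\cb$ is a product of closed convex (unbounded) polyhedra, so I would argue that the infimum is attained at a ``corner'': fixing $\tilde x\in\ca$, the map $x\mapsto x^*Q\tilde x = (Q\tilde x)^* x$ is linear in $x$ over $\cb=[b,\infty)^n$, hence minimized either at the vertex $x=(b,\ldots,b)^*$ or driven to $-\infty$ if some component of $Q\tilde x$ is negative. The key claim is that $Q\tilde x$ has all nonnegative components whenever $\tilde x\in\ca$: writing $\tilde x = a\,\mathbf{1} + v$ with $v\ge 0$ componentwise, we get $Q\tilde x = a\,Q\mathbf{1} + Qv$; the first term is $\ge 0$ by the hypothesis $\sum_j Q_{ij}\ge 0$, but $Qv$ need not be nonnegative, so this naive step fails and I need to be more careful.

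So instead I would set up the problem symmetrically. Consider minimizing over $x\in\cb$ first; linearity forces the minimizer to a vertex of $\cb$, but since the coefficient vector $Q\tilde x$ is not known to be nonnegative we cannot immediately conclude. The cleaner route is to use a symmetry/convexity reduction: since $Q$ is symmetric positive definite, consider the quadratic form $q(z)=z^*Qz$. Polarization gives $x^*Q\tilde x = \tfrac14\big(q(x+\tilde x)-q(x-\tilde x)\big)$, which is not obviously helpful. Therefore I would instead prove the two one‐sided bounds separately: (a) the value $ab\sum_{i,j}Q_{ij}$ is achieved by taking $x=b\,\mathbf 1$, $\tilde x = a\,\mathbf 1$, so the infimum is $\le ab\sum_{i,j}Q_{ij}$; (b) for the reverse inequality, I would show $x^*Q\tilde x \ge ab\sum_{i,j}Q_{ij}$ for all feasible $x,\tilde x$.

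For step (b), write $x = b\,\mathbf 1 + u$ and $\tilde x = a\,\mathbf 1 + w$ with $u,w\ge 0$ componentwise. Then
\begin{equation*}
x^*Q\tilde x = ab\,\mathbf 1^*Q\mathbf 1 + b\,\mathbf 1^*Q w + a\,u^*Q\mathbf 1 + u^*Q w.
\end{equation*}
The first term is exactly $ab\sum_{i,j}Q_{ij}$. The term $b\,\mathbf 1^*Qw = b\sum_i w_i(\sum_j Q_{ji})\ge 0$ since each $w_i\ge 0$ and $\sum_j Q_{ji}=\sum_j Q_{ij}\ge 0$; similarly $a\,u^*Q\mathbf 1\ge 0$. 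The genuinely delicate term is $u^*Qw$ with $u,w\ge 0$ arbitrary: this can a priori be negative. Here I would exploit that we are taking an infimum over \emph{both} $u$ and $w$ and that the cross terms $b\,\mathbf 1^*Qw$, $a\,u^*Q\mathbf 1$ are available to absorb it; more precisely, I expect the right argument is to reduce, by scaling $u\to tu$, $w\to tw$ and letting $t\downarrow 0$, to showing that the infimum is not decreased — i.e. that the directional derivative at $u=w=0$ of $g(u,w):=a\,u^*Q\mathbf 1 + b\,\mathbf 1^*Qw + u^*Qw$ along any admissible direction $(u,w)\ge 0$ is nonnegative, which follows from $Q\mathbf 1\ge 0$ componentwise — and then an extremality/KKT argument upgrades this to a global statement using strict convexity of the quadratic part in the combined variable. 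This last point is the main obstacle: ruling out that moving into the interior of the cone $\cb\times\ca$ (where the indefinite bilinear cross term $u^*Qw$ is active) can lower the value below $ab\sum_{i,j}Q_{ij}$. I would handle it by writing the first-order optimality (KKT) conditions for the constrained bilinear program, showing any stationary point either sits at the vertex $(b\mathbf 1, a\mathbf 1)$ or has some active face on which the problem reduces to a lower-dimensional instance of the same type, and then concluding by induction on $n$; the base case $n=1$ is immediate since $Q_{11}>0$ and $x\tilde x\ge ab$.
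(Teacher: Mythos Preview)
You have correctly isolated the obstruction: after writing $x=b\mathbf 1+u$, $\tilde x=a\mathbf 1+w$ with $u,w\ge 0$, the row-sum hypothesis disposes of every term except the cross term $u^*Qw$, which can indeed be negative. Your proposed remedy---first-order KKT analysis followed by induction on $n$---cannot close this gap, because the infimum need not be attained at all. Take $n=2$, $a=b=1$, and $Q=\bigl(\begin{smallmatrix}1&-1/2\\-1/2&1\end{smallmatrix}\bigr)$: this matrix is symmetric positive definite with row sums $1/2>0$, so all hypotheses are met, yet for $x=(1,t)^*$, $\tilde x=(t,1)^*$ one computes $x^*Q\tilde x=2t-\tfrac12-\tfrac12 t^2\to-\infty$ as $t\to\infty$. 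There is thus no stationary point for KKT to locate, the inductive reduction never starts, and in fact the identity claimed in the lemma fails under the stated hypotheses. The cross term you flagged is not a technicality to be absorbed; it is exactly where the statement breaks.

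For comparison, the paper argues by Lagrangian duality: it sets $L(x,\tilde x,\lambda_1,\lambda_2)=x^*Q\tilde x-\lambda_1^*(x-b\mathbf 1)-\lambda_2^*(\tilde x-a\mathbf 1)$, equates the $(x,\tilde x)$-gradients to zero, and takes the resulting critical value as the dual function $G(\lambda_1,\lambda_2)$. This faces the same difficulty from the other side: $L$ is bilinear in $(x,\tilde x)$, so its unconstrained infimum over $\R^n\times\R^n$ is $-\infty$ rather than the saddle value, and weak duality then gives no information---consistent with the counterexample above. In the actual application (Proposition~\ref{prop:lower-bound}) the functions $f,\tilde f$ are bounded above as well as below, so the discretised vectors live in a box rather than a half-space; some such two-sided constraint, or a direct argument using the specific fBm covariance structure, is what is really required. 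Neither your sketch nor the paper's duality argument can be completed for the unbounded half-space version as stated.
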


\begin{pf}
Set $\mathbf{a}=a  \mathbf{1}\in\mathbb{R}^n$ and $\mathbf{b}=b
\mathbf{1}\in\mathbb{R}^n$. The
Lagrangian of our quadratic programming problem is a function
$L \dvtx  \mathbb{R}^n\times\mathbb{R}^n\times\mathbb{R}_+^n\times
\mathbb{R}_+^n\to\mathbb{R}$ defined as
\[
L(x,\tilde{x},\lambda_1,\lambda_2)= x^{*} Q
\tilde{x} - \lambda _{1}^{*} (x- \mathbf{b} )-
\lambda_{2}^{*} (\tilde{x}- \mathbf{a}).
\]
It is readily checked that $\nabla_x L(x,\tilde{x},\lambda_1,\lambda_2)=
Q\tilde{x}-\lambda_1$ and $\nabla_{\tilde{x}}
L(x,\tilde{x},\lambda_1,\lambda_2)= Qx-\lambda_2$, which vanishes for
$x=Q^{-1}\lambda_2$ and $\tilde{x}=Q^{-1}\lambda_1$. Therefore,
\begin{eqnarray*}
\inf \bigl\{L(x,\tilde{x},\lambda_1,\lambda_2); x,
\tilde {x}\in \mathbb{R}^n \bigr\} &=&L \bigl({Q^{-1}
\lambda_2}, {Q^{-1}\lambda_1},
\lambda_1, \lambda_2 \bigr)
\\
&=&-{\lambda_{1}^{*} Q^{-1}\lambda_2}
+ \lambda_{1}^{*} \mathbf{b}+ \lambda_{2}^{*}
\mathbf{a}=: G(\lambda_1, \lambda_2).
\end{eqnarray*}
We have thus obtained a dual problem of the form
%
%e31 #&#
\begin{equation}
\label{eq:dual-G}
\max \bigl\{G(\lambda_1, \lambda_2);
\lambda_1, \lambda_2\in \mathbb{R}_+^n \bigr\}.
\end{equation}

Let us now solve Problem (\ref{eq:dual-G}). We first maximize
$G$ without positivity constraints on $\lambda_1$ and $\lambda_2$: we
get $\nabla_{\lambda_1} G(\lambda_1, \lambda_2)=-Q^{-1}\lambda
_2+\mathbf{b}$ and\break
$\nabla_{\lambda_2} G(\lambda_1, \lambda_2)=-\lambda
_{1}^{*}Q^{-1}+\mathbf{a}$, which
vanishes for $\lambda_{1}^{\circ}=Q\mathbf{a}$ and $\lambda
_{2}^{\circ}=Q\mathbf{b}$.
Observe now
that our assumption $\sum_{j=1}^{n}Q_{ij}\ge0$ for all
$i=1,\ldots,n$ implies $\lambda_{1}^{\circ}, \lambda_{2}^{\circ
}\ge0$, so that
$\lambda_{1}^{\circ}$ and~$\lambda_{2}^{\circ}$ are feasible for
the dual
problem. Hence
\[
\max \bigl\{G(\lambda_1, \lambda_2);
\lambda_1, \lambda_2\in \mathbb{R}_+^n \bigr
\} = G\bigl(\lambda_{1}^{\circ}, \lambda_{2}^{\circ}
\bigr) = ab \sum_{i,j=1}^{n}
Q_{ij},
\]
which completes the proof.
\end{pf}

Importantly enough, Lemma~\ref{lem:optim-quadratic} can be
applied in order to get a lower bound on $\mathcal{H}$ norms:

%pr3.5 #&#
\begin{proposition} \label{prop:lower-bound}
Let $B$ be a one-dimensional fBm on $[0,\tau]$, let
$\mathcal{H}\equiv\mathcal{H}_{\tau}$ be the associated reproducing kernel
Hilbert space and $f, \tilde{f}\in\mathcal{H}$ such that $f_u\ge b$ and
$\tilde{f}_u\ge a$ for any $u\in[0,\tau]$. Then $\langle
f,\tilde{f}\rangle_{\mathcal{H}}\ge a  b   \tau^{2H}$.
\end{proposition}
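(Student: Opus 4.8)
The plan is to deduce Proposition~\ref{prop:lower-bound} from Lemma~\ref{lem:optim-quadratic} by a discretization and limiting argument, using the Riemann-sum representation \eqref{eq:inner-pdt-H-rough} of the inner product in $\ch$. The point is that Lemma~\ref{lem:optim-quadratic} is a statement about finite symmetric matrices $Q$ whose row sums are nonnegative, and the increment covariance matrices $Q_{jk}=\be[\Delta_j(B)\Delta_k(B)]$ of fBm on a partition are exactly of this type; passing to the limit along partitions then transfers the bound from the discrete quadratic forms to the continuous inner product $\langle f,\tilde f\rangle_\ch$.

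\smallskip

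\textbf{Key steps.} First I would fix a family of partitions $\pi=(t_j)_{0\le j\le n}$ of $[0,\tau]$ with $|\pi|\to 0$ and set $Q^\pi_{jk}=\be[\Delta_j(B)\,\Delta_k(B)]$ where $\Delta_j(B)=B_{t_j}-B_{t_{j-1}}$. I would check that $Q^\pi$ is symmetric, strictly positive definite (it is the covariance matrix of the linearly independent Gaussian vector $(\Delta_1(B),\ldots,\Delta_n(B))$), and has nonnegative row sums: indeed $\sum_{k=1}^n Q^\pi_{jk}=\be[\Delta_j(B)\,(B_\tau-0)]=\be[\Delta_j(B)\,B_\tau]=\tfrac12\big(t_j^{2H}-t_{j-1}^{2H}+\tau^{2H}-(\tau-t_{j-1})^{2H}-\tau^{2H}+(\tau-t_j)^{2H}\big)$, which I would verify is $\ge 0$ using that $R(t,s)=\tfrac12(t^{2H}+s^{2H}-|t-s|^{2H})$ is the covariance of a genuine process so that the increment $\Delta_j(B)$ and the value $B_\tau$ have covariance equal to the increment of $R(\cdot,\tau)$, and that $s\mapsto R(s,\tau)$ is nondecreasing on $[0,\tau]$ (equivalently $s^{2H}-(\tau-s)^{2H}+\text{const}$ is nondecreasing since $x\mapsto x^{2H}$ is increasing). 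Moreover $\sum_{j,k}Q^\pi_{jk}=\be[(B_\tau-B_0)^2]=\tau^{2H}$. Next, since $f,\tilde f\in\ch$ are $\ge b$ and $\ge a$ respectively pointwise, the vectors $x^\pi=(f_{t_{j-1}})_{1\le j\le n}$ and $\tilde x^\pi=(\tilde f_{t_{j-1}})_{1\le j\le n}$ satisfy $x^\pi\in\cb=[b,\infty)^n$ and $\tilde x^\pi\in\ca=[a,\infty)^n$; Lemma~\ref{lem:optim-quadratic} therefore gives
\[
\sum_{j,k}f_{t_{j-1}}\,\tilde f_{t_{k-1}}\,Q^\pi_{jk}\ \ge\ ab\sum_{j,k}Q^\pi_{jk}\ =\ ab\,\tau^{2H}.
\]
Finally I would let $|\pi|\to 0$: by \eqref{eq:inner-pdt-H-rough} the left-hand side converges to $\langle f,\tilde f\rangle_\ch$, while the right-hand side is constant, yielding $\langle f,\tilde f\rangle_\ch\ge ab\,\tau^{2H}$.

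\smallskip

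\textbf{Main obstacle.} The delicate point is the applicability of \eqref{eq:inner-pdt-H-rough} to the pair $(f,\tilde f)$ at hand: that representation is stated for $\phi,\psi\in\ch$, and I must make sure the functions $f,\tilde f$ appearing here (which in the application will be Malliavin derivatives $\bd X_t$, already known to lie in $\ch$) are covered, and that the limit holds for the particular sequence of partitions I use — e.g. dyadic ones, or a sequence along which the relevant convergence is known from the construction of $\ch$ in Section~\ref{sec:malliavin-tools}. A secondary but genuinely necessary check is the nonnegativity of the row sums of $Q^\pi$, i.e. the monotonicity of $s\mapsto R(s,\tau)$ on $[0,\tau]$; this is elementary for fBm but must be spelled out since it is exactly the hypothesis needed to invoke Lemma~\ref{lem:optim-quadratic}. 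No regularity beyond membership in $\ch$ and the pointwise lower bounds on $f,\tilde f$ is used, so once the limiting identity \eqref{eq:inner-pdt-H-rough} is granted the argument closes immediately.
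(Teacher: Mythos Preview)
Your proposal is correct and follows essentially the same route as the paper: discretize via \eqref{eq:inner-pdt-H-rough}, verify that the increment covariance matrix $Q^\pi$ satisfies the hypotheses of Lemma~\ref{lem:optim-quadratic} (strict positivity and nonnegative row sums, the latter via monotonicity of $s\mapsto R(s,\tau)$), apply the lemma to get $I_\pi(f,\tilde f)\ge ab\,\tau^{2H}$, and pass to the limit. The paper's only differences are cosmetic---it invokes local non-determinism for strict positivity and computes $\partial_u R(\tau,u)$ explicitly---so your argument is essentially the paper's own.
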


\begin{pf}
Recall that, owing to relation (\ref{eq:inner-pdt-H-rough}), we
have $\langle f,\tilde{f}\rangle_{\mathcal{H}}=\break\lim_{|\pi|\to
0}I_{\pi}(f, \tilde{f})$, where $\pi$ stands for a generic
partition $\{0=t_0<\cdots< t_n=\tau\}$ and
\[
I_{\pi}(f,\tilde{f}) = \sum_{i,j = 1}^{n}
f_{t_{i-1}} Q_{ij} \tilde{f}_{t_{j-1}} \qquad  \mbox{with }
Q_{ij}=\mathbf{E}\bigl[\Delta_i (B)\Delta_j(B)
\bigr],
\]
where we recall that $\Delta_{i}(B)=B_{t_{i}}-B_{t_{i-1}}$.
We assume for the moment that $Q$ satisfies the hypothesis of
Lemma~\ref{lem:optim-quadratic}, and we get
\[
I_{\pi}(f,\tilde{f}) \ge ab \sum_{i,j = 1}^{n}
Q_{ij} = ab \sum_{i,j = 1}^{n}
\mathbf{E} \bigl[\Delta_i(B)\Delta_j(B) \bigr] = ab
\mathbf{E} \bigl[B_{\tau}^{2} \bigr] = ab \tau^{2H},
\]
which is our claim.

Let us now prove that $Q$ satisfies the hypothesis of Lemma~\ref{lem:optim-quadratic}.
First, the strict positivity of $Q$ stems from the local
nondeterminism of $B$; see, for example, \cite{Xi}. Indeed, for $u\in
\mathbb{R}^n$
we have
\[
u^{*}Qu=\operatorname{Var}  \Biggl( \sum_{j=0}^{n-1}
u_j \Delta_j(B) \Biggr) \ge c_n \sum
_{j=1}^{n} u_j^{2}
|t_{j}-t_{j-1}|^{2H},
\]
where the lower bound is the definition of local
nondeterminism. Thus $u^{*}Qu>0$ as long as $u\ne0$.

Let us now check that for a fixed $i$ we have $\sum_{j=1}^{n}
Q_{ij} \ge0$. To this end, write
\[
\sum_{j=1}^{n} Q_{ij} =
\mathbf{E} \bigl[\Delta_i(B) B_{\tau} \bigr] =\int
_{t_{i}}^{t_{i+1}} \partial_{u} R(\tau,u) \,du.
\]
Going back to expression (\ref{eq:exp-cov-fbm}), it is now
easily seen that for $u<\tau$ we have
\[
\partial_{u} R(\tau,u) = H \bigl(u^{2H-1} + (
\tau-u)^{2H-1} \bigr)>0,
\]
which completes the proof.
\end{pf}

We can now go back to the proof of relation
(\ref{eq:bounds-inner-prod-rough}), which is divided again into
two steps:
\begin{longlist}[Step 2:]
\item[\textit{Step 1:}] \textit{Lower bound}.
Thanks to relation (\ref{eq:low-up-bnd-DX}), we have that
$\sigma e^{-t M}\leq\mathbf{D}_r X_t$.
Thus we just have to apply Proposition~\ref{prop:lower-bound} to
the Malliavin derivative in order to obtain
%
%e32 #&#
\begin{equation}
\label{eq:low-bnd-DXt-tilde-DXt}
\bigl\langle\mathbf{D} X_t, \mathbf{D}
{X^\theta_t} \bigr\rangle_\mathcal{H}\geq
\sigma^2 t^{2H}e^{-2 M},
\end{equation}
which is our desired lower bound.

\item[\textit{Step 2:}]  \textit{Upper bound}.
In order to obtain an upper bound for $g(F)$, we will use the
representation of $\mathcal{H}$ through fractional derivatives.
Indeed, apply
first the Cauchy--Schwarz inequality in order to get
%
%e33 #&#
\begin{equation}
\label{eq:cauchy-schwarz-DX}
\bigl\langle \mathbf{D}X_t,{\mathbf{D}X^\theta_t}
\bigr\rangle_\mathcal{H}\leq \llVert \mathbf{D}X_t\rrVert
_{\mathcal{H}} \bigl\|{\mathbf{D}X^\theta _t}
\bigr\|_\mathcal{H}.
\end{equation}
We then invoke Lemma~\ref{lem:bnd-H-C-gamma} to bound $\|\mathbf{D}X^\theta_t\|_\mathcal{H}$. This boils down to estimating
\[
a = \sup_{r\in[0,t]} \bigl|\mathbf{D}_{r}X^\theta_t\bigr|\quad
\mbox{and} \quad b = \sup_{r,v\in[0,t]} \frac{\mathbf{D}_{r}X_{t}^{\theta}-\mathbf
{D}_{v}X_{t}^{\theta
}}{(v-r)^{\gamma}},
\]
with $1/2-H<\gamma<1/2$ and any $\theta\ge0$.

Now starting from expression (\ref{derivada}) and owing to the fact
that $V'_{0}$ is uniformly bounded by $M$, we trivially get $a\le
\sigma
  e^{M}$. As far as $b$ is concerned, we write
\[
\bigl\llvert \mathbf{D}_{r}X_{t}^{\theta}-
\mathbf{D}_{v}X_{t}^{\theta} \bigr\rrvert \le\sigma
e^{\int_v^t V_{0}'(X^\theta_s)\,ds}\bigl\llvert 1- e^{\int_r^v
V_{0}'(X^\theta_s)\,ds}\bigr\rrvert \le\sigma M
e^{2M} (v-r).
\]
We thus end up with the inequalities
\[
a\le\sigma e^{M}\quad \mbox{and}\quad b \le\sigma M e^{2M}
t^{1-\gamma}.
\]
We now apply Lemma~\ref{lem:bnd-H-C-gamma} with constants $a$ and $b$,
and we obtain
\[
 \|\mathbf{D}X_t\|_{\mathcal{H}} \le c_{H} \bigl(
\sigma e^{M} t^{H} + \sigma M e^{2M}
t^{1+H} \bigr) \le2c_{H} \sigma M e^{2M}
t^{H},
\]
and hence
\[
\bigl\langle \mathbf{D}X_t,{\mathbf{D}X^\theta_t}
\bigr\rangle_\mathcal{H}\leq 4c_{H} \sigma^{2}
M^{2} e^{4M} t^{2H}.
\]
Finally, putting together the last bound and
(\ref{eq:low-bnd-DXt-tilde-DXt}), we get (\ref{eq:bnd-g-F}) in
the case $H\in(0,1/2)$, which completes the proof of Theorem~\ref{thm:up-low-bnd-additive-case}.
\end{longlist}

%s4 #&#
\section{One-dimensional nonvanishing diffusion coefficient case}
\label{sec:one-dim-multiplicative}

We turn now to the case $m=d=1$, $H\in(\frac{1}2, 1)$ for a nonconstant elliptic
coefficient $\sigma$. Observe that this special case is treated in a
separate section because (i) the Gaussian bound is obtained with weaker
conditions on the coefficients than in the multidimensional case, and
(ii) the proof is shorter due to specific one-dimensional techniques
based on the Doss--Sussman transform and Girsanov's theorem. This is
detailed below.

%re4.1 #&#
\begin{remark}
The Doss--Sussman transform can be justified for any $H\in(0, 1)$ in
our context. However, the computations related to Girsanov's transform
become much more involved when $H<1/2$, and this is why we restrict our
analysis to $H>1/2$ in the sequel.
\end{remark}

%s4.1 #&#
\subsection{Doss--Sussmann transformation}
The idea of the method is to first consider a one-dimensional equation
of Stratonovich-type without drift and then apply Girsanov's theorem
for fBm in order to obtain a characterization of the density.

In order to carry out this strategy, we start by using an independent
copy of $(\Omega,\mathcal{F},\mathbf{P})$ called
$(\Omega',\mathcal{F}',\mathbf{P}')$ supporting a fBm denoted by
$B'$. On
$(\Omega',\mathcal{F}',\mathbf{P}')$, let $Y$ be the unique solution to
%
%e34 #&#
\begin{equation}
\label{eq:non-vanishing}
Y_t= a+\int_0^t
V_1(Y_s)\circ dB'_s,
\end{equation}
where the integral is interpreted either in the Young or Stratonovich
sense (as recalled in Remark~\ref{rmk:young-strato}),
and where $V_1 \in\mathcal{C}^1(\mathbb{R};\mathbb{R})$, $V_1\neq
0$ and
$H\in(\frac{1}2,1)$. We also call $W'$ the underlying Wiener process
appearing in the Volterra-type representation (\ref{eq:volterra-representation}) for $B'$.
We now recall here some details from Doss and Sussmann's
classical computations adapted to our fBm context.

Indeed, as in \cite{NSi}, let us recall that the solution of
equation (\ref{eq:non-vanishing}) can be expressed as
$Y_t=F(B'_t,  a)$, $t>0$, where $F:\mathbb{R}^2\rightarrow\mathbb{R}$
is the flow associated to $V_1$,
%
%e35 #&#
\begin{equation}
\label{eq:dif-eq-F}
\frac{\partial F}{\partial x}(x,y)= V_1\bigl(F(x,y)\bigr),
\qquad F(0,y)=y.
\end{equation}
We remark that if $V_1$ is bounded, then $F$ satisfies $|F(x,y)|\le
c(1+|x|+|y|)$.

Next we relate the solution $X$ of equation (\ref{eq:sde-intro}) to the
process $Y$ defined by (\ref{eq:non-vanishing}). This step is partially
borrowed from \cite{NO}, and we refer to that paper for further
details. Indeed, thanks to a Girsanov-type transform, the following
characterization of the law of the solution to (\ref{eq:sde-intro}) is
shown for $m=d=1$: For any bounded measurable function $U:\mathbb
{R}\to\mathbb{R}$, one has
%
%e36 #&#
\begin{equation}
\label{eq:tail-probab-with-girsanov}
\mathbf{E}_{\mathbf{P}} \bigl[U(X_{t}) \bigr] =
\mathbf{E}_\mathbf{P'} \bigl[U \bigl(F\bigl({B'_t},
a\bigr) \bigr) \xi \bigr],
\end{equation}
where $\xi\equiv\xi_t=\frac{d\mathbf{P}}{d\mathbf{P}'}$ is the
random variable
defined by
%
%e37 #&#
\begin{equation}
\label{eq:defxi}
\xi =\exp \biggl(\int_{0}^{t}
\biggl[\mathcal{M}_{s} \,dW'_s -
\frac{1}2 \mathcal{M}_{s}^{2} \,ds \biggr] \biggr),
\end{equation}
where we have set $\mathcal{M}= K^{-1}( \int_0^\cdot
V_0V_1^{-1}(Y_u)\,du )$.

Notice that in definition (\ref{eq:defxi}), the operator $K$ has been
alluded to in Section~\ref{sec:wiener-space-fbm}. It should be observed
that $K, K^{-1}$ can also be defined, respectively, for $H\geq\frac{1}2$
and an appropriate function $h$, by (see details in \cite{Nu06}, Chapter~5)
\begin{eqnarray*}
K (h) (s)&=&  I_{0^+}^1 \bigl(s^{H-{1}/2}
\bigl(I_{0^+}^{H-{1}/2} \bigl(s^{{1}/2-H}h\bigr)\bigr)\bigr)
(s) \quad\mbox{and}\\
 K^{-1} (h) (s) &=& s^{H-{1}/2}\bigl(D_{0^+}^{H-{1}/2}
\bigl(s^{{1}/2-H}h'\bigr)\bigr) (s).
\end{eqnarray*}
We also recall that in the last equation, $I_{0^+}^\alpha$ and
$D_{0^+}^\alpha$ denote the
fractional integral and fractional derivative, whose expressions are
\[
I^\alpha_{0^+} f(x)= \frac{1}{\Gamma(\alpha)}\int
_a^x (x-y)^{\alpha-1} f(y)\,dy
\]
and
\[
D_{0^+}^\alpha f(x)=\frac{1}{\Gamma(1-\alpha)} \biggl(
\frac{f(x)}{x^\alpha}+\alpha\int_a^x
\frac{f(x)-f(y)}{(x-y)^{\alpha+1}}\,dy \biggr).
\]
It is easily seen from the expressions of $K_{H}^{-1}$ and
$D_{0^+}^{H-{1}/2}$ that $K_{H}^{-1}h$ is an adapted transformation;
see also expression (\ref{eq:exp-Ms}) below. Hence the term $\xi$ in
(\ref{eq:tail-probab-with-girsanov}) corresponds to the usual Girsanov
correction term.
Furthermore, notice that in order for~(\ref{eq:tail-probab-with-girsanov}) to be satisfied, it is required that
$\int_0^\cdot V_0V_1^{-1}(Y_u)\,du\in I_{0+}^{H+{1}/2}(L^2[0,1])$.
This condition is satisfied due to the $\gamma$-H\"olderianity of $Y$
for any $\gamma<H$.

Actually one should prove that Novikov-type conditions are satisfied
for $\xi$ in order to apply Girsanov's transform and get relation
(\ref{eq:tail-probab-with-girsanov}). This is achieved in the
following lemma:

%le4.2 #&#
\begin{lemma}
Let $\xi$ be the random variable defined by (\ref{eq:defxi}), and
assume that Hypothesis \ref{hyp:vector-fields}(1) is satisfied. Then
%
%e38 #&#
\begin{equation}
\label{eq:bnd-KH-1}
\mathcal{M}_{s} \le c_{V}
\beta_{s}\qquad  \mbox{with } \beta_{s} := s^{{1}/2-H}+
\bigl\|B'\bigr\|_{H-{1}/2+\varepsilon},
\end{equation}
for any arbitrarily small $\varepsilon>0$.
Furthermore $\mathbf{E}_{\mathbf{P}'}[\xi]=1$, which justifies the Girsanov
identity~(\ref{eq:tail-probab-with-girsanov}). That is, under $\mathbf{P}$, $B=B'+\int_0^\cdot V_0V_1^{-1}(Y_u)\,du$ is a $H$-fBm.
\end{lemma}

\begin{pf}
According to the expression of $K_{H}^{-1}$, we have
%
%e39 #&#
\begin{equation}
\label{eq:exp-Ms}
\mathcal{M}_{s} =\frac{1}{\Gamma(H-{1}/2)} \bigl(
\mathcal{M}_{s}^{1} + \bigl(H-\tfrac{1}{2}\bigr)
\mathcal{M}_{s}^{2} \bigr),
\end{equation}
where we have set
\begin{eqnarray*}
\mathcal{M}_{s}^{1}& \equiv & \frac{V_0V_1^{-1}(Y_s)}{s^{H-{1}/2}},
\\
\mathcal{M}_{s}^{2} &\equiv &  s^{H-{1}/2} \int
_0^s\frac{s^{{1}/2-H}V_0V_1^{-1}(Y_s)-u^{{1}/2-H}{V_0V_1^{-1}(Y_u)}}{(s-u)^{H+{1}/2}}\,du.
\end{eqnarray*}
The term $\mathcal{M}_{s}^{1}$ is easily bounded: we invoke the uniform
ellipticity of $V_{1}$ and the regularity of $V_{0}$ and $V_1$, which
yields $\mathcal{M}_{s}^{1} \le c   s^{-(H-{1}/2)}$. We now bound
$\mathcal{M}
_{s}^{2}$: let us decompose this term as $\mathcal{M}_{s}^{2}=\mathcal
{M}_{s}^{21}+\mathcal{M}
_{s}^{22}$, with
\begin{eqnarray*}
\mathcal{M}_{s}^{21} &=&  \int_0^s
\frac{1-(s/u)^{H-{1}/2}}{(s-u)^{H+{1}/2}} V_0V_1^{-1}(Y_u)
\,du \quad \mbox{and}\\
 \mathcal{M}_{s}^{22} & =&  \int
_0^s\frac{V_0V_1^{-1}(Y_s)-{V_0V_1^{-1}(Y_u)}}{(s-u)^{H+{1}/2}}\,du.
\end{eqnarray*}
Then, resorting again to the fact that $V_0V_1^{-1}$ is bounded and
with the obvious change of variable $r=u/s$, we get
\[
\bigl|\mathcal{M}_{s}^{21}\bigr| \le \frac{c_{V}}{s^{H-1/2}} \int
_{0}^{1} \frac{r^{H-1/2}-1}{r^{H-1/2}
(1-r)^{H+1/2}}\,dr \le
\frac{c_{V,H}}{s^{H-1/2}}.
\]
In order to handle the term $\mathcal{M}_{s}^{22}$, we start by writing
\[
\mathcal{M}_{s}^{22} \leq c_{V} \int
_0^s \frac{|F(B'_s,a)-F(B'_u,a)|}{(s-u)^{H+{1}/2}}\,du,
\]
and thanks to the Lipschitz properties of $F$ plus elementary integral
computations, we obtain
\[
\mathcal{M}_{s}^{22} \le c_{V,H}
\bigl\|B'\bigr\|_{H-{1}/2+\varepsilon}.
\]
Therefore, summarizing our estimates on $\mathcal{M}^{1}, \mathcal
{M}^{21}$ and $\mathcal{M}^{22}$, the proof of our claim (\ref{eq:bnd-KH-1}) is now completed.

Now let us have a closer look at the process $\beta$: it is
readily checked that $\|B'\|_{\gamma}$ admits quadratic
exponential moments for any $\gamma<H$; see Theorem~3 in \cite{NO}. In
particular, one can choose $\gamma=H-1/2+\varepsilon$ for
$\varepsilon$ small enough, and
hence there exists $\lambda>0$ such that the expected value
$\mathbf{E}[\exp(\lambda\int_{0}^{t} \beta^2(s)\,ds)]$ is a finite quantity.
Owing to a version of Novikov's condition stated in~\cite{Fr75}, Theorem~1.1,
we deduce that $\mathbf{E}[\xi]=1$. This completes the proof.
\end{pf}

%s4.2 #&#
\subsection{Main result in the Doss--Sussman framework}
As in the additive case of Section~\ref{sec:one-dim-additive},
we are able to get both upper and lower Gaussian bounds in a
one-dimensional context:

%th4.3 #&#
\begin{theorem}
\label{th:4.2}
Assume that $H\in(1/2,1)$ and $V_0, V_1$ satisfy the assumptions of
Hypothesis~\ref{hyp:vector-fields}(1). Then there exist constants
$C_1$ and $C_2$ such that for all $t\in(0,1]$, the
solution $X_t$ to equation (\ref{eq:sde-intro}) possesses a
density $p_t$ satisfying for all $x\in\mathbb{R}$,
%
%
%e40 #&#
\begin{eqnarray}
\quad \frac{1}{C_1\sqrt{2\pi t^{2H}}} {\exp \biggl(-C_1
\frac{(x-a)^2}{2t^{2H}} \biggr)}&\leq& p_t(x)
\nonumber
\\[-8pt]
\label{eq:lowb}\\[-8pt]
\nonumber
&\leq& \frac{1}{C_2\sqrt{2\pi t^{2H}}} \exp
\biggl(-C_2\frac{(x-a)^2}{2t^{2H}} \biggr).
\end{eqnarray}
\end{theorem}

\begin{pf}
In this proof one should separate 4 cases: (a) $\lambda\leq V_1(z)\leq
\Lambda$ with subcases $x>a$ and $x\le a$ and (b) $-\Lambda\leq
V_1(z)\leq-\lambda$ with subcases $x<-a$ and $x\ge-a$. These
situations are treated thanks to the same kind of arguments, and we
will thus assume in the proof that $x\ge a$ and $\lambda\leq
V_1(z)\leq
\Lambda$ for all $z\in\mathbb{R}$. We now divide our proof in two steps.

\begin{longlist}[Step 1:]
\item[\textit{Step 1:}]  \textit{Upper bound}.
We start from an equivalent of
(\ref{eq:tail-probab-with-girsanov}) for densities, which is justified
by~\cite{HN}, Theorem~7,  and a duality argument
%
%e41 #&#
\begin{equation}
\label{eq:density-with-girsanov}
p_{t}(x)= \mathbf{E}_\mathbf{
\mathbf{P}'} \bigl[\delta_{x} \bigl(F
\bigl({B'_t}, a\bigr) \bigr) \xi \bigr],
\end{equation}
where $\xi$ is the random variable defined in (\ref{eq:defxi}).
We now integrate by parts in order to get
\[
p_{t}(x)= \mathbf{E}_{\mathbf{P}'} \bigl[\mathbf{1}_{\{F(B'_t, a)\geq x\}}
H \bigl( F\bigl(B'_t, a\bigr), \xi \bigr) \bigr],
\]
with
%
%e42 #&#
\begin{equation}
\label{eq:def-H-F-xi}
H \bigl(F\bigl(B'_t, a\bigr), \xi \bigr)
= \delta \biggl(\frac{\xi  \mathrm{D} F(B'_t, a)}{\llVert  \mathrm{D}
F(B'_t, a) \rrVert _{L^{2}([0,t])}^{2}} \biggr),
\end{equation}
where $\mathrm{D},\delta$, respectively, stand (with a slight abuse of
notation) for the Malliavin
derivative and divergence operator for the Brownian motion
$ W'$ under ${\mathbf{P}'}$.
Let us further simplify the expression for the random variable
$H( F(B'_t, a), \xi)$: setting $K_{t}(u)\equiv
K(t,u)\mathbf{1}_{[0,t]}(u)$, it is readily checked that we have
\[
\mathrm{D}_{u} F\bigl(B'_t, a\bigr) =
\partial_{x} F\bigl(B'_t, a\bigr)
K_{t}(u)\quad  \mbox{and}\quad \bigl\llVert \mathrm{D} F\bigl(B'_t,
a\bigr) \bigr\rrVert _{L^{2}([0,t])}^{2} = \bigl|\partial_{x}
F\bigl(B'_t, a\bigr)\bigr|^{2} t^{2H}.
\]
Plugging this information into (\ref{eq:def-H-F-xi}), and
defining $Z:= \xi  (\partial_{x} F(B'_t, a))^{-1}$,
we end up with
\[
H \bigl(F\bigl(B'_t, a\bigr), \xi \bigr) =
\frac{\delta (Z   K_{t}  )}{t^{2H}} = K_{1}-K_{2},
\]
where
\[
K_{1}=\frac{Z   B'_{t} }{t^{2H}}\quad \mbox{and}\quad K_{2}=
\frac{  \langle\mathrm{D}Z ,   K_{t}  \rangle
_{L^{2}([0,t])}}{t^{2H}}.
\]
We have thus obtained
%
%e43 #&#
\begin{equation}
\label{eq:dcp-pt1-pt2}
\hspace*{6pt}p_{t}(x)= \mathbf{E}_{\mathbf{P}'} [
\mathbf{1}_{\{F(B'_t, a)\geq x\}} K_{1} ] - \mathbf{E}_{\mathbf{P}'} [
\mathbf{1}_{\{F(B'_t, a)\geq x\}} K_{2} ] =: p_{t}^{1}(x)-p_{t}^{2}(x),
\end{equation}
and we shall upper\vspace*{1pt} bound these two terms separately.

The term $p_{t}^{1}(x)$ can be bounded as follows: for
$q_1,q_2,q_3>1$ large enough and a parameter $1<q_4= 1+\varepsilon$ with
an arbitrarily small $\varepsilon>0$, we have
%
%e44 #&#
\begin{eqnarray}
p_{t}^{1}(x) &\le& \frac{\mathbf{E}_{{\mathbf{P}'}}^{1/q_{1}} [| B'_t |^{q_{1}}
 ]}{t^{2H}} {
\mathbf{P}'}^{1/q_{2}} \bigl(F\bigl(B'_t,
a\bigr)\geq x \bigr)
\nonumber
\\[-8pt]
\label{eq:dcp-pt1-xi1-xi4}\\[-8pt]
\nonumber
&&{}\times\mathbf{E}_{{\mathbf{P}'}}^{1/q_{3}} \bigl[\bigl|
\partial_{x}F\bigl(B'_t, a\bigr)
\bigr|^{-q_{3}} \bigr] \mathbf{E}_{{\mathbf{P}'}}^{1/q_4} \bigl[
\xi^{q_4} \bigr].
\end{eqnarray}
We now bound the right-hand side of this inequality:

\begin{longlist}[(iii)]
\item[(i)]  We obviously have $\frac{\mathbf{E}_{{\mathbf
{P}'}}^{1/q_{1}} [|
B'_t |^{q_{1}}  ]}{t^{2H}}\leq c t^{-H}$, since
${B'}$ is a $\mathbf{P}'$-fBm.

\item[(ii)]
Let us prove that there exist two positive constants $c_1$ and $c_2$
such that, for all $x\ge0$,
%
%e45 #&#
\begin{equation}
\label{eq:bounds-inverse-F}
\mathbf{P}'^{1/q_{2}} \bigl(F
\bigl(B'_t, a\bigr)\geq x \bigr)\le c_{1}
\exp \biggl(-\frac{c_{2}(x-a)^2}{t^{2H}} \biggr).
\end{equation}
Indeed, for a fixed $a\in\mathbb{R}$, set $Q\equiv{\mathbf{P}'}(
F(B'_t, a)\geq x )$,
and decompose this term as $Q=Q_{1}+Q_{2}$ with
\[
Q_{1} = \mathbf{P}' \bigl(F\bigl(B'_t,
a\bigr)\geq x, B'_{t} \ge0 \bigr)\quad \mbox{and}\quad
Q_{2} = \mathbf{P}' \bigl(F\bigl(B'_t,
a\bigr)\geq x, B'_{t} < 0 \bigr).
\]
Since we have assumed $x>a$ and $V_{1} > \lambda>0$, it is readily checked
that $Q_{2}=0$.
In the sequel we thus bound the term $Q_{1}$. Toward this aim,
appealing to relation~(\ref{eq:dif-eq-F}), we write
\[
Q_{1} = {\mathbf{P}'} \biggl(\int_{0}^{B'_{t}}
V_{1} \bigl(F(z,a) \bigr)\,dz \geq x -a , B'_{t}
\ge0 \biggr).
\]
Next recall that we have assumed $\lambda\leq V_1(z)\leq\Lambda$ for
all $z\in\mathbb{R}$. Hence we have $\int_{0}^{\zeta} V_{1}
(F(z,a)  )\,dz$
$\le$ $\Lambda\zeta$ for all $\zeta\ge0$, and thus
\[
Q_{1} \le{\mathbf{P}'} \bigl(\Lambda
B'_{t} \geq x -a , B'_{t} \ge0
\bigr) = {\mathbf{P}'} \bigl(\Lambda B'_{t}
\geq x -a \bigr) \le\exp \biggl(-\frac{(x-a)^2}{\Lambda^{2}t^{2H}} \biggr),
\]
which is consistent with relation (\ref{eq:bounds-inverse-F}). The
proof is now completed by a similar analysis of the term $Q_{2}$.

\item[(iii)]
Equation (\ref{eq:dif-eq-F}) and the nondegeneracy assumptions on
$V_{1}$ show that $\partial_{x}F$ is bounded from below by a constant,
so that we get the trivial bound
\[
\mathbf{E}_{{\mathbf{P}'}}^{1/q_{3}} \bigl[\bigl| \partial_{x}F
\bigl(B'_t, a\bigr) \bigr|^{-q_{3}} \bigr]\le c.
\]

\item[(iv)] Set $S=\int_{0}^{t} \mathcal{M}_{s}   \,dW'_s$ and
$D=\int_{0}^{t}
\mathcal{M}_{s}^{2}   \,ds$, where $\mathcal{M}\equiv K^{-1}_H( \int_0^\cdot
V_0\times V_1^{-1}(Y_u)\,du )$ as above, and where we recall that
$q_{4}=1+\varepsilon$
with an
arbitrarily small $\varepsilon>0$. It is readily checked that
\[
\xi^{q_{4}} = \exp \biggl(q_{4} S-\frac{q_{4}}{2} D
\biggr) = \exp \biggl(q_{4} S-\frac{q_{4}^{2}}{2} D \biggr) \exp
\biggl(\frac{q_{\varepsilon}}{2} D \biggr),
\]
where $q_{\varepsilon}=q_{4}^{2}-q_{4}=\varepsilon(1+\varepsilon)$.
Now observe that the
term $\exp( q_{4} S-\frac{q_{4}^{2}}{2} D)$ is a Girsanov
change of measure which corresponds to a shift on ${B'}$ of the
form
\[
\hat{B}={B}'-q_{4}\int_0^\cdot
V_0V_1^{-1}(Y_u)\,du
=B-(q_{4}-1)\int_0^\cdot
V_0V_1^{-1}(Y_u)\,du.
\]
Calling $\hat{{\mathbf{P}'}}$ the probability under which $\hat{B}$
is a fBm, we get
%
%e46 #&#
\begin{equation}
\label{eq:expect-hat-Q-KH-1}
\mathbf{E}_{{\mathbf{P}'}} \bigl[\xi^{q_4} \bigr] =
\mathbf{E}_{\hat{{\mathbf{P}'}}} \biggl[\exp \biggl(\frac
{q_{\varepsilon}}{2} D \biggr)
\biggr].
\end{equation}
Now plug estimate (\ref{eq:bnd-KH-1})
into (\ref{eq:expect-hat-Q-KH-1}). This yields
\begin{eqnarray*}
D &\le& c_{V} \bigl(1+ \bigl\|B'\bigr\|^2_{H-{1}/2}
\bigr)
\\
&\le& c_{V} \biggl(1+ \biggl\|\hat{B}+q_{4} \int
_0^\cdot V_0V_1^{-1}(Y_u)
\,du \biggr\|_{H-{1}/2 }^{2} \biggr)
\\
&\le& c_{V} \bigl(1+ \|\hat{B}\|_{H-{1}/2 }^{2}
\bigr).
\end{eqnarray*}
Going back to relation (\ref{eq:expect-hat-Q-KH-1}) and taking
into account the fact that $q_{\varepsilon}$ can be chosen arbitrarily
small, we get $\mathbf{E}_{{\mathbf{P}'}}[ \xi^{q_4} ]<\infty$.
\end{longlist}

Gathering all the above estimates into
(\ref{eq:dcp-pt1-xi1-xi4}), we have thus obtained that
\[
p_{t}^{1}(x) \le \frac{c_{1}}{t^{H}} \exp \biggl(-
\frac{c_{2}(x-a)^2}{t^{2H}} \biggr).
\]
The upper bound for $p_{t}^{2}(x)$ [defined in (\ref{eq:dcp-pt1-pt2})]
is obtained along the same lines, and we spare the details to the
reader. Let us just mention that more Malliavin derivatives of $\xi$
and $F(B',a)$ are involved in the computations, and this is where we
use both the nondegeneracy and smoothness assumptions on $V$. Then
taking into account the estimates on $p_{t}^{1}(x)$ and $p_{t}^{2}(x)$
in~(\ref{eq:dcp-pt1-pt2}), we end up with our global upper bound in
(\ref{eq:lowb}).

\item[\textit{Step 2:}] \textit{Lower bound}.
Our strategy to obtain the lower bound in (\ref{eq:lowb}) is based on
the following decomposition:
%
%e47 #&#
\begin{equation}
\label{eq:decA}
\qquad p_t(x)=\mathbf{E}_\mathbf{P'}
\bigl[\delta_x\bigl(F\bigl({B'_t}, a\bigr)
\bigr) (\xi _t-\xi_{c_1t}) \bigr] +\mathbf{E}_\mathbf{P'}
\bigl[\delta_x\bigl(F\bigl({B'_t}, a\bigr)
\bigr) \xi_{c_1t} \bigr] =:\rho_{t}^{1} +
\rho_{t}^{2},
\end{equation}
where $c_1$ is a constant to be determined later. Observe that the main
term will be $\rho_{t}^{2}$, which means that we consider a two-point
partition of the interval $[0,t]$, and we perform a one-step
decomposition of $X_{t}$ (or $Y_{t}$) on $[0,c_{1}t]$ and $[c_{1}t,t]$,
as opposed to the general time interval partition in Section~\ref{sec:general-low-bound}.

First, we start studying the main term $\rho_t^2$: Note that due to
(\ref{eq:volterra-representation}), we can apply Girsanov's theorem in
order to get
\begin{eqnarray*}
\rho_{t}^{2}&=& \mathbf{E}_\mathbf{P'}
\bigl[\mathbf{E}_{\mathbf{P'}} \bigl[\delta _x\bigl(F
\bigl({B'_t}, a\bigr)\bigr) |\mathcal{F}_{c_1t}
\bigr] \xi_{c_1t} \bigr]
\\
&=&\mathbf{E}_\mathbf{P'} \biggl[\exp \biggl(-
\frac{(F^{-1}(x,a)-\int_0^{c_1t}K(t,s)\,dW'_s)^2}{2\int_{c_1t}^tK^{2}(t,s)\,ds} \biggr)\frac
{\partial
_x F^{-1}(x,a)}{\sqrt{2\pi\int_{c_1t}^tK^{2}(t,s)\,ds}} \xi _{c_1t} \biggr]\\
&=&
\mathbf{E} _\mathbf{P}[L_{c_1,t}],
\end{eqnarray*}
where we have set
\begin{eqnarray*}
L_{c_1,t}&:=&  \exp \biggl(-\frac{(F^{-1}(x,a)-\int_0^{c_1t}K(t,s)\,dW_s+\int_0^{c_1t}V_0V_1^{-1}(X_s)\,ds)^2}{2\int_{c_1t}^tK^{2}(t,s)\,ds} \biggr)\\
&&{}\times\frac
{\partial_x F^{-1}(x,a)}{\sqrt{2\pi\int_{c_1t}^tK^{2}(t,s)\,ds}} .
\end{eqnarray*}
In order to determine a lower bound for the above expression, we use
the following information:
\begin{longlist}[(iii)]
\item[(i)]  We have $\partial_xF^{-1}(x,a){ \ge[V_{1}(F(x,a))]^{-1} } \ge
\Lambda^{-1}$.
\item[(ii)]  We apply the inequality $(m+a)^2\ge\frac{1}2 m^2- 2a^2$ to
$m\equiv
F^{-1}(x,a)-\int_0^{c_1t}K(t,s)\,dW_s$ and $a$ defined by $a^2\equiv
(\int_0^{c_1t}V_0V_1^{-1}(X_s)\,ds )^2\le c_V t^2$.
\item[(iii)]  Gaussian convolution identities can be invoked in order to
compose the quadratic exponential term defining $L_{c_1,t}$ with the
expected value with respect to the Gaussian random variable $\int_0^{c_1t}K(t,s)\,dW_s$.
\item[(iv)]  The following trivial bound holds true: $\int_{c_{1}t}^{t}
K^{2}(t,s)   \,ds \le\int_{0}^{t} K^{2}(t,s)   \,ds = t^{2H}$.
These ingredients easily entail that
\[
\rho_t^2\ge\frac{c}{\sqrt{2\pi\hat{\sigma}^2}} \exp \biggl(-
\frac{F^{-1}(x,a)^2}{2\hat{\sigma}^2} \biggr),
\]
for $\hat{\sigma}^2=2\int_{c_1t}^t K^{2}(t,s)\,ds+\int_0^{c_1t}K^{2}(t,s)\,ds$, and we observe that $\sigma^{2}\le\hat
{\sigma}^2\le2\sigma^{2}$.
\end{longlist}
\end{longlist}

Now we estimate the first term $\rho_{t}^{1}$ in (\ref{eq:decA}) and
prove that it is upper bounded by a quantity which is smaller than half
of the lower bound we have just obtained. For this term we need to use
again the integration by parts estimates carried out in~(\ref{eq:density-with-girsanov}). In order not to repeat arguments we just
mention the main steps: we start by writing
\[
\rho_{t}^{1} = \mathbf{E}_\mathbf{P'}
\bigl[\delta_x\bigl(F\bigl({B'_t}, a\bigr)
\bigr) (\xi_t-\xi _{c_1t}) \bigr] = \mathbf{E}_{\mathbf{P}'}
\bigl[\mathbf{1}_{\{F(B'_t, a)\leq x\}} H \bigl( F\bigl(B'_t,
a\bigr), \xi_t-\xi_{c_1t} \bigr) \bigr],
\]
and we decompose this expression into $p^1-p^2$ like in (\ref{eq:dcp-pt1-pt2}), except for the fact that this time $Z$ is replaced by
$Z_t:= ((\xi_t-\xi_{c_1t} ) \,\partial_{x} F(B'_t, a))^{-1}$.

We wish to take advantage of the fact that $\xi_t-\xi_{c_1t}$ is a
small quantity whenever $c_{1}$ is close to 1. For this, define the
process $\mathcal{M}_{c_{1}t, \cdot}$ as $\mathcal{M}_{c_{1}t,
s}=K^{-1}_H(\int_{c_1t}^\cdot V_0V_1^{-1}(Y_u)\,du)$, consider $\theta\in[0,1]$ and define
\[
\xi_t(\theta):=\xi_{c_1t} \exp \biggl(\theta\int
_{c_1t}^t \mathcal{M}_{c_{1}t, s}
\,dW'_s -\frac{\theta^2}2 \int_{c_1t}^t
\mathcal{M}_{c_{1}t, s}^2 \,ds \biggr).
\]
Then by the mean value theorem, we have
\[
\xi_t-\xi_{c_1t} =\int_0^1
\,d\theta \xi_t(\theta) \biggl(\int_{c_1t}^t
\mathcal{M}_{s} \,dW'_s-\theta\int
_{c_1t}^t \mathcal{M}_{s}^2
\,ds \biggr).
\]
Applying Fubini's theorem, one sees that the same estimates as in (\ref
{eq:dcp-pt1-xi1-xi4}) appear again with the following exceptions: (i)
The last term in the decomposition becomes $\mathbf{E}_{\mathbf
{P'}}^{1/q_4} [
(\xi_t(\theta))^{q_4}  ]$, which is handled in the same fashion as
before. (ii) There is another term appearing in the decomposition, namely
\[
\mathbf{E}_{\mathbf{P'}}^{1/q_5} \biggl[ \biggl(\int
_{c_1t}^t \mathcal{M}_{s}
\,dW'_s -\theta\int_{c_1t}^t
\mathcal{M}_{s}^2 \,ds \biggr)^{q_5} \biggr].
\]
Using\vspace*{1.5pt} (\ref{eq:bnd-KH-1}) and {the same estimates for stochastic
integrals as in step 1}, one obtains that the latter term is upper
bounded by $c(1-c_1^{2-2H})t^{2-2H}$. Therefore taking $c_1$
sufficiently close to 1 one obtains that this upper bound is smaller
than $1/2$ of the lower bound previously obtained.
The proof is now complete.
\end{pf}

%s5 #&#
\section{General lower bound}
\label{sec:general-low-bound}
We now wish to obtain Gaussian-type lower bounds for the
multi-dimensional case of equation (\ref{eq:sde-intro}).
However, the computations in this section will be performed on
the following simplified version for notational sake (adaptation
of our calculations to the drift case are straightforward):
%
%e48 #&#
\begin{equation}
\label{eq:sde-Vd}
X_t =a +\sum_{i=1}^d
\int_0^t V_i(X_s)
\circ dB_s^ i,
\end{equation}
where $a\in\mathbb{R}^m$ is a generic initial condition,
$V_i\dvtx \mathbb{R}^m\rightarrow\mathbb{R}^m$ $i=1,\ldots,  d$ is a
collection of
smooth and bounded vectors fields and $B^1, \ldots,  B^d$ are
$d$ independent fBm's with $H\in(1/2,1)$.
Recall that our goal is then to prove relation
(\ref{eq:low-bnd-intro}) in this context. To this end, we shall
assume that Hypothesis \ref{hyp:vector-fields} [especially relation
(\ref{eq:elliptic-condition-V})] is satisfied for
the remainder of the article.
Observe that, as in Section~\ref{sec:one-dim-multiplicative}, equation
(\ref{eq:sde-Vd}) is written in the Stratonovich sense. Relations
between Stratonovich and Young integrals will be investigated in
Section~\ref{sec:fractional-strato-eq}.

%s5.1 #&#
\subsection{Preliminary considerations}
\label{sec:gen-setting-d-dim}
Let us recall briefly the strategy used in \cite{Ba,Ko} in
order to obtain Gaussian lower bounds for solutions of stochastic
differential equations. The argument starts with some additional
notation: Recall that the natural filtration of $B$, which is also the
natural filtration of the underlying Wiener process $W$ defined
by (\ref{eq:volterra-representation}), is denoted by $\mathcal{F}_t$. As
we have introduced in Section~\ref{sec:malliavin-tools}, we
write $\mathbf{E}_{t}$ for the conditional expectation with respect to
$\mathcal{F}_t$. Under our working Hypothesis \ref{hyp:vector-fields},
let us also mention that the following result is available (see
\cite{BNOT,HN} for further details):

%pr5.1 #&#
\begin{proposition}
Under Hypothesis \ref{hyp:vector-fields}, there exists a
unique solution to~(\ref{eq:sde-Vd}). Then for any
$t\in(0,1]$, the random variable $X_t$ is nondegenerate in the
sense of Definition~2.1.1 in \cite{Nu06}, namely: \textup{(i)} $X_{t}\in\mathbf
{D}^{\infty}$; \textup{(ii)} the Malliavin matrix $\Gamma_{X_{t}}$ is almost
surely invertible and satisfies $\Gamma_{X_{t}}^{-1}\in\bigcap_{p\ge1}
L^{p}(\Omega)$. In particular, the density of $X_t$ admits
the representation $p_t(x)=\mathbf{E}[\delta_x(X_t)]$, where $\delta_x$
stands for the Dirac measure at point $x$.
\end{proposition}

With this preliminary result in hand, the quantity
$\mathbf{E}[\delta_x(X_t)]$ will be analyzed by means of the succesive
evaluation of conditional densities of an
approximation sequence $\{F_j;  0\le j \le n\}$ such that
$X_t=F_n$. We thus consider $p_t(x)=\mathbf{E}[\delta_x(F_n)]$. The
discretization procedure is based on a corresponding partition
of the time interval as $\pi: 0=t_0<\cdots<t_n=t$, and the sequence of
random variables
$F_j$ which satisfy the relation $F_j\in\mathcal{F}_{t_j}$.

Let us give some hints about the general strategy for the
discretization: it is designed to take advantage of conditional
Malliavin calculus, which allows one to capture the
convolution property of Gaussian distributions. We shall thus
assume for the moment a structure of the form
%
%e49 #&#
\begin{equation}
\label{eq:dcp-Fj-I-Rd}
F_j= F_{j-1} + I_j +
R_j,
\end{equation}
where we recall that $F_{j-1}\in\mathcal{F}_{t_{j-1}}$. In formula
(\ref{eq:dcp-Fj-I-Rd}), the term $I_j$ will stand for a
Gaussian random variable (conditionally to $\mathcal{F}_{t_{j-1}}$), and
$R_j$ refers to a small remainder term, whose contribution to
the density of $F_j$ can be neglected with respect to the one
induced by $I_j$ just like in the argument in (\ref{eq:decA}). The
local Gaussian bound
(\ref{eq:low-bnd-intro}) will be obtained from the density of the
sum $\sum_{j=1}^{n}I_j$. The argument will finish by an application of
the Chapman--Kolmogorov formula.

As suggested by equation (\ref{eq:split-Xt-intro}) {and setting
$\Delta
^i_{j+1}(B):=B_{t_{j+1}}^{i}-B_{t_{j}}^{i}$}, a natural
candidate consists of taking $F_j=X_{t_{j}}$, which yields
%
%e50 #&#
\begin{equation}
\label{eq:first-Ij-Rj}
\quad I_{j}=\sum_{i=1}^d
V_i (X_{t_{j}}) \Delta^i_{j+1}(B) \quad
\mbox{and} \quad R_{j}=\sum_{i=1}^d
\int_{t_{j-1}}^{t_{j}} \bigl[V_i
(X_s) - V_i (X_{t_{j}}) \bigr]
\,dB^i_s.\hspace*{-6pt}
\end{equation}
However, this simple and natural guess is not suitable for the
fBm case. Indeed, the analysis of the variances of $I_j$ induced
from decomposition (\ref{eq:first-Ij-Rj}) reveals that a significant
amount is generated by the covariances between the increments $\Delta
^i_{j} (B)$. Now, if we write
%
%e51 #&#
\begin{equation}
\label{eq:dcp-variance-fbm}
t^{2H} = \mathbf{E} \bigl[ \bigl(B_{t}^{i}
\bigr)^2 \bigr]= \mathbf{E} \Biggl[ \Biggl( \sum
_{j=1}^{n} \Delta^i_j(B)
\Biggr)^2 \Biggr] = \sum_{j,k=1}^{n}
\mathbf{E} \bigl[\Delta^i_j(B)\Delta^i_k(B)
\bigr],
\end{equation}
we realize that the diagonal terms on the right-hand side
expression only account for a term of the form $\sum_{j}
|t_{j}-t_{j-1}|^{2H}$, which vanishes as the mesh of the
partition goes to 0 when $H\in(1/2,1)$. This means that our
decomposition (\ref{eq:first-Ij-Rj}) will not be able to capture
the correct amount of variance contained in $X_t$, and has to be
modified.

There are at least two natural generalizations of the Euler-type method
described above:
\begin{longlist}[(2)]
\item[(1)]
Take into account the off-diagonal terms in
(\ref{eq:dcp-variance-fbm}), and perform a block type analysis.
\item[(2)]
Express the equation as an equation driven by the Wiener process
$W$ defined by relation (\ref{eq:volterra-representation}), and
take advantage of the independence of the increments of $W$.
\end{longlist}
We have not been able to implement the strategy (1) above without
cumbersome calculations, and we have thus chosen to follow the second
approach. Toward this aim, we first recall how to define
equation~(\ref{eq:sde-Vd}) as a Stratonovich equation with
respect to $W$.

%s5.2 #&#
\subsection{Fractional equations as Stratonovich-type equations}\label
{sec:fractional-strato-eq}
In order to handle equation~(\ref{eq:sde-Vd}) as an equation
with respect to $W$, let us first introduce the following
functional space:

%de5.2 #&#
\begin{definition}
Let $|\mathcal{H}|$ be the space of measurable functions
$\phi:[0,1]\rightarrow\mathbb{R}^{d}$ such that
\[
\|\phi\|_{|\mathcal{H}|}^2 := \alpha_H \int
_0^1 \biggl(\int_0^1
|\phi_r||\phi_u||r-u|^{2H-2}\,dr \biggr)
\,du<+\infty.
\]
Note that $|\mathcal{H}|$ endowed with the norm $\|\cdot\|_{|\mathcal
{H}|}$ is a Banach
space of functions, which is also a subspace of $\mathcal{H}$.
\end{definition}

In the sequel we also consider random elements with values in
$|\mathcal{H}|$. In particular, the norm of $\phi$ in $\mathbf
{D}^{1,2}(|\mathcal{H}|)$
is given by
\[
\|\phi\|_{\mathbf{D}^{1,2}(|\mathcal{H}|)}=\mathbf{E} \bigl[\|\phi\| _{|\mathcal{H}|}^2
\bigr]+\mathbf{E} \bigl[\|\mathbf{D} \phi\|_{|\mathcal{H}|\otimes
|\mathcal{H}|}^2 \bigr].
\]

As mentioned before, the Young-type integrals we have handled so far
can be identified with Stratonovich-type integrals with respect to $B$,
and finally as anticipative Stratonovich-type integrals with respect to
$W$. In order to state these results more formally, let us recall what
we mean by Stratonovich integrals with respect to $B$:

%de5.3 #&#
\begin{definition}\label{def:strato-intg}
Let $u=\{u_t,  t\in[0,1]\}$ be a $\mathbb{R}^{d}$-valued process
defined on
$(\Omega,\mathcal{F},\mathbf{P})$, whose paths are supposed to be
integrable. The
Stratonovich (or symmetric, or Russo--Vallois) integral of $u$ with
respect to $B$ is denoted by $\sum_{k=1}^{d}\int_{0}^{1} u_{s}^{k}
\circ dB_{s}^{k}$ and is defined as
\[
\sum_{k=1}^{d}\int_{0}^{1}
u_{s}^{k} \circ dB_{s}^{k} = \lim
_{\varepsilon\to0} \frac{1}{2\varepsilon} \sum_{k=1}^{d}
\int_{0}^{1} u_{s}^{k}
\bigl(B_{s+\varepsilon}^{k} - B_{s-\varepsilon}^{k} \bigr)
\,ds,
\]
whenever the limit exists. In the same way, the indefinite Stratonovich
integral is defined as
%
%e52 #&#
\begin{equation}
\label{eq:def-indefinite-strato-intg}
\sum_{k=1}^{d}\int
_{0}^{t} u_{s}^{k} \circ
dB_{s}^{k} = \sum_{k=1}^{d}
\int_{0}^{1} \bigl(u_{s}^{k}
\mathbf {1}_{[0,t]}(s) \bigr)\circ dB_{s}^{k}\qquad
\mbox{for } t\in[0,1].
\end{equation}
\end{definition}

The following result is borrowed from \cite{AN}, Proposition~3 and
\cite{DU}, Proposition~4.2 and page 193 (we also refer to \cite{AN}, Section~5, for considerations on the indefinite Stratonovich integral). It
gives the link between Stratonovich and Young integrals with respect to $B$.

%\txx{red}{I do not understand this. I think there are notation
%problems and not always D is $\mathrm{D}$ and viceversa. The same with
%$\mathbf{\delta}$.} \txx{blue}{Check the changes below.}

%pr5.4 #&#
\begin{proposition} \label{prop:AN}
Let $u=\{u_t,  t\in[0,1]\}\in\mathbf{D}^{1,2}(|\mathcal{H}|)$, such that
%
%e53 #&#
\begin{equation}
\label{eq:intg-cdt-strato} \int_0^1 \!\!\int
_0^1 |\mathbf{D}_s
u_t| |t-s|^{2H-2}\,ds\,dt<\infty.
\end{equation}
Then:
\begin{longlist}[(ii)]
\item[(i)]  The Stratonovich integral $\sum_{k=1}^{d}\int_{0}^{1}
u_{s}^{k} \circ dB_{s}^{k}$
in the sense of Definition~\ref{def:strato-intg} exists, and we also have
%
%e54 #&#
\begin{equation}
\label{r:AN}
\sum_{k=1}^{d}\int
_{0}^{1} u_{s}^{k} \circ
dB_{s}^{k} =\bolds{\delta}(u)+\alpha_H \sum
_{k=1}^{d} \int_0^1\!\!
\int_0^1 \mathbf {D}_{s}^{k}
u_t |t-s|^{2H-2} \,ds\,dt.
\end{equation}

\item[(ii)]  Whenever $u\in\mathcal{C}^{\gamma}$ a.s. with $\gamma
>1/2$ and $H\in
(1/2,1)$, the Stratonovich integral $\sum_{k=1}^{d}\int_{0}^{1}
u_{s}^{k} \circ dB_{s}^{k}$ coincides with the Young integral $\sum_{k=1}^{d}\int_{0}^{1} u_{s}^{k}   \,dB_{s}^{k}$.
\end{longlist}
\end{proposition}

%re5.5 #&#
\begin{remark}
In the Brownian case (which corresponds to the limiting case $H\searrow1/2$),
one may wonder about the relation between our pathwise-type
Stratonovich integral and the Stratonovich integral of a square
integrable adapted process $u\in L_{a}^{2}$. The easiest way to carry
out this comparison might be to start with relation (\ref{r:AN}).
Indeed, on the right-hand side of this identity, the Skorohod integral
$\bolds{\delta}(u)$ coincides with It\^o's integral as long as $u\in
L_{a}^{2}$. As far as the terms $\alpha_H\int_0^1\!\int_0^1 \mathbf{D}_{s}^{k}
u_t |t-s|^{2H-2} \,ds\,dt$ is concerned, let us first mention that the
measure $2\alpha_H |t-s|^{2H-2} \,ds \,dt$ converges to the Lebesgue
measure on the diagonal $\{(s,t)\in[0,1]^{2};   s= t\}$ as $H\searrow
1/2$. We thus end up morally with a sum of terms of the form $\frac{1}2\int_0^1 \mathbf{D}_{t}^{k} u_t   \,dt$. The identification of this
term with
the bracket $\frac{1}2\langle u, W\rangle_{1}$ is then standard and is
detailed in \cite{Nu06}, Remark~2, page 175.
\end{remark}

The next Proposition allows us to interpret the stochastic
integral appearing in~(\ref{eq:sde-Vd}) as a Stratonovich-type
integral.

%pr5.6 #&#
\begin{proposition}\label{prop:Xt-strato-B}
Let $X=\{X_t,  t\in[0,1]\}$ be the solution to
(\ref{eq:sde-Vd}), and assume Hypothesis~\ref{hyp:vector-fields} holds
true. Then $X\in\mathbf{D}^{1,2}(|\mathcal{H}|)$ and satisfies the equation
\[
\label{eq:Strato}
X_t=a + \sum_{k=1}^d
\int_0^t V_k(X_u)
\circ dB_u^{k},
\]
where the indefinite Stratonovich integral is defined by (\ref{eq:def-indefinite-strato-intg}), and can be decomposed as a Skorohod
integral plus a trace term as in (\ref{r:AN}).
\end{proposition}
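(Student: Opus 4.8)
The plan is to verify the integrability conditions required by Proposition \ref{prop:AN}, applied to the process $u_t = V_k(X_t)$ for each $k=1,\ldots,d$, and then to deduce the Stratonovich representation from the Young integral formulation of equation \eqref{eq:sde-Vd}. First I would observe that since $V_k\in\cac_b^\infty$ and $X$ is $\ga$-H\"older continuous a.s. for any $\ga\in(1/2,H)$ by Proposition \ref{prop:cotes-hol-sol}, the composition $V_k(X)$ is also $\ga$-H\"older continuous a.s. with a norm controlled by $\|V_k\|_{\cac_b^1}\|X\|_{\ga}$. Since $\ga>1/2$, this already places $V_k(X)$ in $\ch$, and in fact in $|\ch|$: indeed any $\ga$-H\"older function with $\ga>1/2$ belongs to $|\ch|$ because the kernel $|r-u|^{2H-2}$ is integrable and $|\phi_r|\le |\phi_0| + \|\phi\|_\ga$ is bounded on $[0,1]$, so $\|\phi\|_{|\ch|}^2\le \al_H\|\phi\|_{\infty}^2\int_0^1\int_0^1|r-u|^{2H-2}dr\,du<\infty$.

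Next I would address the Malliavin differentiability in $|\ch|$. By Proposition \ref{prop:moments-sdes}, $X_t\in\pmb{D}^\infty$ for every $t$, with the derivative bounds \eqref{eq:bnd-moments-D-n-X-t}, and by the chain rule $\bd_s V_k(X_t) = \nabla V_k(X_t)\,\bd_s X_t$. Combining the uniform bound on $\nabla V_k$ with \eqref{eq:bnd-moments-D-n-X-t} and \eqref{eq:bnd-moments-X-t}, together with the fact that $\|B\|_{\ga}$ has Gaussian-type (in particular, all) moments, one gets that $\be[\|V_k(X)\|_{|\ch|}^2]<\infty$ and $\be[\|\bd V_k(X)\|_{|\ch|\otimes|\ch|}^2]<\infty$; this is the statement $X\in\pmb{D}^{1,2}(|\ch|)$ (more precisely $V_k(X)\in\pmb{D}^{1,2}(|\ch|)$, and one transfers this back to $X$ via the ellipticity/boundedness of the $V_k$, or simply records it for the composed process which is what the integral needs). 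The pointwise integrability condition \eqref{eq:intg-cdt-strato}, namely $\int_0^1\int_0^1|\bd_s V_k(X_t)||t-s|^{2H-2}\,ds\,dt<\infty$, follows from the same estimates: $|\bd_s V_k(X_t)|\le c_V\sup_{s,r\le 1}|\bd_r X_t|$ is bounded by an a.s.\ finite random variable (using Proposition \ref{prop:AN}(iii) to make sense of the supremum), and the double integral of $|t-s|^{2H-2}$ is finite for $H>1/2$.

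With the hypotheses of Proposition \ref{prop:AN} verified, part (ii) of that proposition gives that the Young integral $\int_0^t V_k(X_u)\,dB_u^k$ appearing in \eqref{eq:sde-Vd} coincides with the Stratonovich integral $\int_0^t V_k(X_u)\circ dB_u^k$, and part (i) gives the decomposition \eqref{r:AN} into a Skorohod integral plus the trace term $\al_H\int_0^1\int_0^1 \bd_s(V_k(X_t))|t-s|^{2H-2}\,ds\,dt$ (restricted to $[0,t]$). Substituting this identification into \eqref{eq:sde-Vd} yields the claimed equation, and summing over $k$ completes the proof.

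\textbf{Main obstacle.} The main technical point is the passage from the pathwise (a.s.) H\"older and derivative bounds of Propositions \ref{prop:moments-sdes}--\ref{prop:cotes-hol-sol} to the $L^2(\Omega)$-type statement $V_k(X)\in\pmb{D}^{1,2}(|\ch|)$; one must carefully integrate the exponential-in-$\|B\|_\ga$ bounds against the Gaussian tail of $\|B\|_\ga$ (a Fernique-type argument) and check that the $|\ch|$-norm of the Malliavin derivative — which involves a double convolution with $|t-s|^{2H-2}$ — remains in $L^2$. The justification that the essential supremum defining the Malliavin derivative bounds is measurable and a.s.\ finite, which is precisely the content of Proposition \ref{prop:AN}(iii) referenced after \eqref{eq:bnd-moments-D-n-X-t}, is the other point requiring care; once these are in place, the rest is a direct application of Proposition \ref{prop:AN}.
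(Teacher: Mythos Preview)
Your proposal is correct and follows essentially the same approach as the paper: verify the hypotheses of Proposition~\ref{prop:AN} using the a~priori bounds \eqref{eq:bnd-moments-X-t}--\eqref{eq:bnd-moments-D-n-X-t} and the H\"older regularity from Proposition~\ref{prop:cotes-hol-sol}, then invoke part~(ii) to identify the Young and Stratonovich integrals. The only minor difference is that the paper works directly with $X$ rather than $V_k(X)$, bounding $\be[\|X\|_{|\ch|}^2]$ via $\be[\|X\|_\infty^2]\int_0^1\int_0^1|r-s|^{2H-2}dr\,ds$ and treating the derivative and trace conditions as ``readily checked'' from \eqref{eq:bnd-moments-D-n-X-t}; the Fernique-type integrability you flag as the main obstacle is indeed needed for the exponential bounds on $\bd X$, but the paper simply takes it for granted.
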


\begin{pf}
According to Propositions \ref{prop:moments-sdes} and \ref{prop:AN}, we
just have to prove
that $X\in\mathbf{D}^{1,2}(|\mathcal{H}|)$ and satisfies
relation (\ref{eq:intg-cdt-strato}). We first focus on proving
the relation
\[
\mathbf{E} \bigl[\|X\|_{|\mathcal{H}|}^2 \bigr]+\mathbf{E} \bigl[\|
\mathbf{D}X\|_{|\mathcal{H}|\otimes
|\mathcal{H}|}^2 \bigr]<\infty.
\]
In order to see the first part of this inequality, invoke
relation (\ref{eq:bnd-moments-X-t}), and write
\begin{eqnarray*}
\mathbf{E} \bigl[\|X\|_{|\mathcal{H}|}^2 \bigr]&=&
\alpha_H\int_0^1\!\!\int
_0^1 \mathbf{E} \bigl[| X_r||X_s|
\bigr] |r-s|^{2H-2} \,dr\,ds
\nonumber
\\
&\leq& c \mathbf{E} \bigl[\|X\|_\infty^2 \bigr] \int
_0^1\!\!\int_0^1
|r-s|^{2H-2} \,dr\,ds<c_1. \label{Strato:cota1}
\end{eqnarray*}
Along the same lines and owing to
(\ref{eq:bnd-moments-D-n-X-t}), it is also readily checked that
$\mathbf{E}[\|\mathbf{D}X\|_{|\mathcal{H}|\otimes|\mathcal{H}|}^2]
<\infty$ and that
relation (\ref{eq:intg-cdt-strato}) holds true, which completes the
proof.
Note that due to Proposition~\ref{prop:AN}(ii) and Proposition~\ref{prop:cotes-hol-sol}, we obtain the other assertions.
\end{pf}

Finally, the following corollary is the key to the effective
decomposition we shall use in order to get our Gaussian lower
bound on $p_t$:

%co5.7 #&#
\begin{corollary}\label{cor:eq-X-strato-wrt-W}
Let the same assumptions as for Proposition~\ref{prop:Xt-strato-B} hold
true. For $0\le s \le t \le1$ and $\varphi\in|\mathcal{H}|$, we define
\[
K_t^{*}(\varphi)_{s} := \int
_{s}^{t} \varphi_r
\,\partial_r K(r,s) \,dr.
\]
Then the process $K^{*}_t(V_k(X))_\cdot\in\operatorname{Dom}(\delta)$ and satisfies
the equation
%
%e55 #&#
\begin{eqnarray}
X_t &=& a + \sum_{k=1}^d
\int_0^t \bigl[K_t^{*}
\bigl(V_k(X)\bigr) \bigr]_{s} \circ dW_s^{k}
\nonumber
\\[-8pt]
\label{eq:Strato-W}
\\[-8pt]
\nonumber
&=& a +\sum_{k=1}^d \int
_{0}^{t} \biggl(\int_{s}^{t}
\partial_{u} K(u,s) V_k(X_u) \,du \biggr)
\circ dW_{s}^k,
\end{eqnarray}
where the anticipative Stratonovich integrals with respect to $W$
can be decomposed as a Skorohod integral plus a trace term as follows:
%
%e56 #&#
\begin{eqnarray}
&& \sum_{k=1}^d \int
_0^t \bigl[K_t^{*}
\bigl(V_k(X)\bigr) \bigr]_{s} \circ dW_s^{k}
\nonumber
\\[-8pt]
\label{eq:strato-sko-with-W}\\[-8pt]
\nonumber
&& \qquad= \delta \bigl(K_t^{*}\bigl(V(X)\bigr) \bigr) + \sum
_{k=1}^d \int_0^t
D_{s}^{k} \bigl[K_t^{*}
\bigl(V_k(X)\bigr) \bigr]_{s} \,ds.
\end{eqnarray}
\end{corollary}

\begin{pf}
For notational sake, we give some details of the proof for $n=d=1$, the
easy adaptation to the multidimensional case being omitted. We also set
$V\equiv V_{1}$.
According to Proposition~\ref{prop:Xt-strato-B} and relation (\ref{r:AN}), we have $X_{t}=a+S_{t}+c_{H}   T_{t}$, with
\[
S_{t}= \bolds{\delta} \bigl(V(X) \mathbf{1}_{[0,t]} \bigr)
\quad\mbox{and} \quad T_{t} = \int_0^1\!\!\int
_0^1 \mathbf{D}_r \bigl(V(X)
\mathbf {1}_{[0,t]} \bigr)_{s} |r-s|^{2H-2} \,dr \,ds.
\]
Then owing to \cite{Nu06}, Proposition~5.2.2, we have $S_{t}=\delta
(K^{*}(V(X)\mathbf{1}_{[0,t]}))$. In\break addition, a direct and easy computation
shows that $K^{*}(V(X)\mathbf{1}_{[0,t]})=\break K_t^{*}(V_k(X))\mathbf
{1}_{[0,t]}$, so that
we have obtained
\[
S_{t} = \delta \bigl(K_t^{*}
\bigl(V_k(X)\bigr) \bigr),
\]
that is, the first term in (\ref{eq:strato-sko-with-W}).

Next, for a function $\varphi\dvtx [0,1]^{2}\to\mathbb{R}$ set
\[
\bigl[K^{*,\otimes2}\varphi \bigr]_{r_{1},r_{2}}= \int_{r_{1}}^{1}
\!\!\int_{r_{2}}^{1} \partial_{s_{1}}K(s_{1},r_{1})
\,\partial_{s_{2}}K(s_{2},r_{2})
\varphi_{s_{1}s_{2}} \,ds_{1} \,ds_{2}.
\]
Thanks to a slight extension of (\ref{eq:inner-pdt-H-smooth}), we get
\begin{eqnarray*}
T_{t} &=& \int_{0}^{1}
\bigl[K^{*,\otimes2} \bigl(\mathbf{D}V(X)\mathbf {1}_{[0,t]} \bigr)
\bigr]_{s,s} \,ds = \int_{0}^{1}
D_{s} \bigl[K^{*}\bigl(V(X)\mathbf{1}_{[0,t]}
\bigr) \bigr]_{s} \,ds \\
&=&  \int_{0}^{t}
D_{s} \bigl[K_{t}^{*}\bigl(V(X)\bigr)
\bigr]_{s} \,ds,
\end{eqnarray*}
where the second relation is due to Proposition~\ref{prop:relation-D-DW}, and the third one stems from the fact that
$K^{*}(V(X)\mathbf{1}_{[0,t]})=K_t^{*}(V_k(X))\mathbf{1}_{[0,t]}$.
Gathering the
expressions we have obtained for the two terms $S_{t}$ and $T_{t}$, the
proof of our claim (\ref{eq:strato-sko-with-W}) is now complete.
\end{pf}

%s5.3 #&#
\subsection{Discretization procedure}
We now proceed to the decomposition of $F_n:=X_t$ as
announced in (\ref{eq:dcp-Fj-I-Rd}), starting from the
expression of $F_j$ for $j=0,\ldots,n$. Indeed, according to expression
(\ref{eq:Strato-W}), a natural approximation sequence for $X_t$ based
on a partition
$0=t_0<\cdots<t_n=t$ of $[0,t]$ is the following:
%
%e57 #&#
\begin{equation}
\label{eq:recurs-Fi-Fi-1}
F_{i}= F_{i-1} + I_{i} +
R_{i},
\end{equation}
where, introducing the additional notation
%
%e58 #&#
\begin{equation}
\label{eq:notation-eta-i-gk-i}
\eta_{i}(u):=\inf(u,t_{i})\quad \mbox{and}\quad
g_{i,s}^k :=\int_{s}^{t}
\partial_{u} K(u,s) V_k(X_{\eta_{i}(u)}) \,du,
\end{equation}
we set (note that $g_{i-1,s}^k\in\mathcal{F}_{t_{i-1}}$)
%
%e59 #&#
\begin{eqnarray}
F_{i-1}&:=&\sum_{k=1}^d \int
_{0}^{t_{i-1}} g_{i-1,s}^k \circ
dW_{s}^k,
\nonumber
\\[-8pt]
\label{eq:exp-I-i-cdt-gauss}
\\[-8pt]
\nonumber
I_{i} &:=& \sum_{k=1}^d\int
_{t_{i-1}}^{t_{i}} g_{i-1,s}^k \circ
dW_{s}^k= \sum_{k=1}^d
V_k(X_{t_{i-1}}) \int_{t_{i-1}}^{t_{i}}
K(t,s) \,dW_{s}^k,
\end{eqnarray}
where the last integral above is simply a Wiener integral with respect
to $W$. We also introduce a family of random variables $R_{i}$ defined by
%
%e60 #&#
\begin{equation}
\label{eq:def-Ri}
R_{i}:= \sum_{k=1}^d
\int_{t_{i-1}}^{t_{i}} Q^k_s
\circ dW_{s}^k,
\end{equation}
where $Q$ is the process defined by
%
%e61 #&#
\begin{equation}
\label{eq:def-Qk} Q_s^k: = \int_s^t
\partial_u K(u,s) \bigl[V_k(X_{\eta_i(u)})-V_k(X_{t_{i-1}})
\bigr]\,du.
\end{equation}
Observe that if $V$ is elliptic and bounded, it is clear from
expression (\ref{eq:exp-I-i-cdt-gauss}) that
$\sum_{i} \operatorname{Cov}_{t_{i-1}}(I_{i})\asymp t^{2H} Id_m$ up
to a
constant, independently of the particular values of the $t_{i}$'s. We
shall see, however, how to choose those values in Condition \ref{cdt:points-partition}.

Finally we introduce some random variables
$\Phi_{M}(\mathrm{N}_{\gamma,p}^i(B))$ for $i=1, \ldots, n$ which
allow us to control the supremum norm of the solution of
equation (\ref{eq:sde-Vd}) and of their stochastic derivatives.
This argument needs to be added in the methodology of \cite{Ba,Ko}, and
therefore we have to tailor the arguments therein to our situation.
The localization random variables are based on the family of functionals
$\mathrm{N}_{\gamma,p}^i(B)$ defined by
\[
\mathrm{N}_{\gamma,p}^i(B)=\int_{t_{i-1}}^{t_i}
\!\int_{t_{i-1}}^{t_i} \frac{|B_v-B_u|^{2p}}{|v-u|^{2\gamma p+2}} \,du\,dv,
\]
which can be compared to H\"older-type norms and have the
advantage that they can be differentiated with respect to $B$.
In fact, we can see the aim of introducing this functional in
the following proposition, which is direct consequence of the
Garsia--Rodemich--Rumsey's lemma; see, for example, \cite{Ga}.

%pr5.8 #&#
\begin{proposition} \label{prop:rel-normB-N}
Let $H>\frac{1}2$ and $p$ such that
$0<\gamma<H-\frac{1}{2p}$. Then we have
$\|B\|_{t_{i-1},t_i,\gamma}\leq c_{\gamma,p}
[\mathrm{N}_{\gamma,p}^i(B)]^{1/2p}$.
\end{proposition}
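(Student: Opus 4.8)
\textbf{Proof proposal for Proposition \ref{prop:rel-normB-N}.} The plan is to apply the Garsia--Rodemich--Rumsey (GRR) inequality on the interval $[t_{i-1},t_i]$ with the gauge function $\Psi(x)=x^{2p}$ and the weight $p(u)=u^{2\gamma p+2}/c$ for a suitable normalizing constant, so that the double integral in the GRR estimate is precisely $\bn_{\gamma,p}^i(B)$ (up to a constant depending only on $\gamma$ and $p$).

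First I would recall the precise statement of GRR: if $f$ is continuous on an interval $J$ and
\[
\int_J \int_J \Psi\lp\frac{|f(v)-f(u)|}{p(|v-u|)}\rp \, du\, dv =: \Gamma < \infty,
\]
then for all $u,v\in J$,
\[
|f(v)-f(u)| \le 8 \int_0^{|v-u|} \Psi^{-1}\lp\frac{4\Gamma}{\varrho^2}\rp \, dp(\varrho).
\]
Next I would substitute $\Psi(x)=x^{2p}$, so $\Psi^{-1}(y)=y^{1/2p}$, and $p(\varrho)=\varrho^{\gamma+\frac1p}$ (so that $dp(\varrho) = (\gamma+\frac1p)\varrho^{\gamma+\frac1p-1}\,d\varrho$ and $p(|v-u|)^{2p}=|v-u|^{2\gamma p + 2}$). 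With these choices, for $f=B$ restricted to $J=[t_{i-1},t_i]$, the quantity $\Gamma$ equals $\bn_{\gamma,p}^i(B)$. The right-hand side of the GRR bound then becomes a constant multiple of
\[
\Gamma^{1/2p} \int_0^{|v-u|} \varrho^{-1/p}\, \varrho^{\gamma+\frac1p-1}\,d\varrho
= c\,\Gamma^{1/2p}\,\int_0^{|v-u|} \varrho^{\gamma-1}\,d\varrho
= \frac{c}{\gamma}\,\Gamma^{1/2p}\,|v-u|^{\gamma},
\]
where the integral converges precisely because $\gamma>0$. Dividing by $|v-u|^\gamma$ and taking the supremum over $t_{i-1}\le u<v\le t_i$ yields $\|B\|_{t_{i-1},t_i,\gamma}\le c_{\gamma,p}[\bn_{\gamma,p}^i(B)]^{1/2p}$.

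The only point requiring mild care — and the closest thing to an obstacle — is checking the integrability of the exponents so that GRR applies in the first place: one needs $\Psi^{-1}$ composed with $\Gamma/\varrho^2$ to be integrable against $dp$ near $\varrho=0$, which, after the substitution above, reduces to the condition $\gamma>0$ for the $\varrho^{\gamma-1}$ integrand, together with ensuring the exponent $\gamma+\frac1p>0$ makes $p(\cdot)$ a valid (strictly increasing, vanishing at $0$) gauge; the hypothesis $0<\gamma<H-\frac1{2p}$ comfortably guarantees this, and moreover $\gamma<H-\frac1{2p}$ is exactly what makes $\bn_{\gamma,p}^i(B)$ finite almost surely (by Fubini and the bound $\be[|B_v-B_u|^{2p}]=c_p|v-u|^{2Hp}$, the integrand is integrable since $2Hp-2\gamma p - 2 > -1$). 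Hence the proof is a direct specialization of GRR with no genuine difficulty beyond bookkeeping the exponents.
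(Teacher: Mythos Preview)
Your proof is correct and is exactly the approach the paper has in mind: the paper does not spell out the argument but simply states that the proposition is a direct consequence of the Garsia--Rodemich--Rumsey lemma, which is precisely what you carry out with the choices $\Psi(x)=x^{2p}$ and $p(\varrho)=\varrho^{\gamma+1/p}$. Your exponent bookkeeping (in particular the role of $\gamma>0$ for the convergence of $\int_0 \varrho^{\gamma-1}d\varrho$ and of $\gamma<H-\tfrac{1}{2p}$ for the a.s.\ finiteness of $\bn_{\gamma,p}^i(B)$) is accurate.
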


The next step is to study the conditional densities of the
approximation sequence~$F_i$.
To this end, one has to control various terms for which the
localization technique of Malliavin Calculus turns out to be useful.
Specifically, recall that we have introduced families of functions
$\Phi
_{M},\Phi_{M,\epsilon}$ given by expression (\ref{eq:def-Phi}).
In the
sequel we localize our expectations using
functionals of the type $\Phi_{M}(\mathrm{N}_{\gamma,p}^i(B))$ and
$\Phi
_{c_i,\epsilon}(\sum_{j=1}^d\int_{t_{i-1}}^{t_i}|\mathrm
{D}^j_rR_i|^2\,dr)$ for some constants $c_{i},\epsilon$ of the form
%
%e62 #&#
\begin{equation}
\label{eq:def-ci-epsiloni}
c_i:=\frac{\lambda}{4} \int_{t_{i-1}}^{t_i}K^{2}(t,s)
\,ds>0 \quad \mbox{and}\quad\epsilon_i:=\frac{c_i}2>0.
\end{equation}
Furthermore, in order to ease notation, notice that we will simply write
%
%e63 #&#
\begin{equation}
\label{eq:star}
\Phi_{M}\equiv\Phi_{M}\bigl(
\mathrm{N}_{\gamma,p}^i(B)\bigr)\quad \mbox{and}\quad \Phi_{c_i,\epsilon_i}
\equiv\Phi_{c_i,\epsilon_i} \Biggl(\sum_{j=1}^d
\int_{t_{i-1}}^{t_i}\bigl|\mathrm{D}^j_rR_i\bigr|^2
\,dr \Biggr).
\end{equation}

With this additional notation in hand, we can proceed to the first step
of our approximation scheme: since $F_i$ is $\mathcal{F}_{t_{i-1}}$
conditionally
nondegenerate and the localizations $\Phi_{M}$ and $
\Phi_{c_i,\epsilon_i}\in D^\infty$, we can write
\[
\mathbf{E}_{t_{i-1}} \bigl[\delta_x(F_i)\bigr] =
\mathbf{E}_{t_{i-1}} \bigl[\delta_x(F_i)
\Phi_{M} \Phi_{c_i,\epsilon_i}\bigr]+\mathbf{E}_{t_{i-1}} \bigl[
\delta_x(F_i) (1-\Phi_{M}
\Phi_{c_i,\epsilon_i}) \bigr],
\]
and due to the nonnegativity of the second term, we have
\[
\mathbf{E}_{t_{i-1}} \bigl[\delta_x(F_i)\bigr]
\geq\mathbf{E}_{t_{i-1}} \bigl[\delta_x(F_i)
\Phi_{M}\Phi_{c_i,\epsilon_i}\bigr].
\]
Recalling that $F_{i}=F_{i-1}+I_{i}+R_{i}$, we then obtain the
following decomposition:
%
%e64 #&#
\begin{equation}
\label{eq:dcp-delta-Fi}
\mathbf{E}_{t_{i-1}} \bigl[\delta_x(F_i)
\Phi_M\Phi_{c_i,\epsilon
_i} \bigr]= J_{1,i} +
J_{2,i} + J_{3,i},
\end{equation}
where
%
%e65 #&#
\begin{equation}
\label{eq:def-J1-J2} \quad J_{1,i}= \mathbf{E}_{t_{i-1}} \bigl[
\delta_x(F_{i-1}+I_{i})\bigr],\qquad
J_{2,i}= \mathbf{E}_{t_{i-1}} \bigl[\delta_x(F_{i-1}+I_{i})
(\Phi_{M}\Phi_{c_i,\epsilon_i}-1)\bigr]\hspace*{-16pt}
\end{equation}
and
%
%e66 #&#
\begin{equation}
\label{eq:def-J3}
J_{3,i}= \sum_{j=1}^m
\mathbf{E}_{t_{i-1}} \biggl[ \Phi_{M} \Phi_{c_i,\epsilon_i} \int
_{0}^{1} \partial_{x_j}
\delta_x(F_{i-1}+I_{i}+\rho R_i)
R_i^j \,d\rho \biggr].
\end{equation}
Our aim is now to prove that in this decomposition, $J_{1,i}$
should yield the main contribution, while $J_{2,i}$ is small
because of the quantity $(\Phi_{M}\Phi_{c_i,\epsilon_i}-1)$
whenever $M$ and $n$ are large enough, and $J_{3,i}$ is small
due to the presence of the difference between $X_{t_i}-X_{t_{i-1}}$ in
$R_{i}$. We shall implement this strategy in the next subsections.

%s5.4 #&#
\subsection{Upper and lower bounds on $J_{1,i}$}
The main information which will be used about $J_{1,i}$ is the
following:

%pr5.9 #&#
\begin{proposition}\label{prop:exp-J1-i-gauss}
Let $J_{1,i}$ be defined by (\ref{eq:def-J1-J2}). Then under
Hypothesis \ref{hyp:vector-fields} we have
%
%e67 #&#
\begin{equation}
\label{eq:exp-J1d}
\quad J_{1,i}=\mathbf{E}_{t_{i-1}} \bigl[
\delta_x(F_{i-1}+I_{i})\bigr] =
\frac{\exp (-({1}/2)
(x-F_{i-1})^{*} \Sigma_{i-1}^{-1}(x-F_{i-1}) )}{ (
2\pi )^{m/2} |\Sigma_{i-1}|^{1/2}},\hspace*{-8pt}
\end{equation}
where $\Sigma_{i-1}$ is a deterministic (conditionally to
$\mathcal{F}_{t_{i-1}}$) matrix such that
\[
\lambda \biggl(\int_{t_{i-1}}^{t_{i}}
K^{2}(t,u) \,du \biggr)\mbox{Id}_{m} \le
\Sigma_{i-1} \le\Lambda \biggl(\int_{t_{i-1}}^{t_{i}}
K^{2}(t,u) \,du \biggr) \mbox{Id}_{m},
\]
and where the two strictly positive constants $\lambda,\Lambda$
satisfy (\ref{eq:elliptic-condition-V}).
\end{proposition}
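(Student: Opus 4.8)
The plan is to exploit the conditionally Gaussian structure of $F_{i-1}+I_i$ that is already made explicit in \eqref{eq:exp-I-i-cdt-gauss}. Write $I_i = \sum_{k=1}^d V_k(X_{t_{i-1}})\, G_i^k$ with $G_i^k := \int_{t_{i-1}}^{t_i} K(t,s)\, dW_s^k$. The first point is a measurability observation: $F_{i-1}$ and the matrix $V(X_{t_{i-1}})$, where $V(x)=(V_1(x),\ldots,V_d(x))\in\R^{m\times d}$ as in Hypothesis \ref{hyp:vector-fields}, are $\cf_{t_{i-1}}$-measurable, whereas the vector $G_i=(G_i^1,\ldots,G_i^d)^*$ is measurable with respect to the increments of $W$ on $[t_{i-1},t_i]$ and hence independent of $\cf_{t_{i-1}}$. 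Since the coordinates $W^k$ are independent standard Wiener processes, $G_i$ is a centered Gaussian vector with covariance $v_i\,\id_d$, where $v_i:=\int_{t_{i-1}}^{t_i}K^2(t,s)\,ds$, and by independence this law is unchanged after conditioning on $\cf_{t_{i-1}}$.

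Putting these facts together, $F_{i-1}+I_i = F_{i-1}+V(X_{t_{i-1}})\,G_i$, so that conditionally on $\cf_{t_{i-1}}$ it is a Gaussian vector in $\R^m$ with mean $F_{i-1}$ and covariance matrix
\[
\Sigma_{i-1} := v_i\, V(X_{t_{i-1}})\,V^*(X_{t_{i-1}}),
\]
which is itself $\cf_{t_{i-1}}$-measurable, hence deterministic given $\cf_{t_{i-1}}$. Specializing the elliptic condition \eqref{eq:elliptic-condition-V} at $x=X_{t_{i-1}}$ and multiplying by the strictly positive scalar $v_i$ gives at once
\[
\la v_i \, \id_m \le \Sigma_{i-1} \le \laa v_i \, \id_m ,
\]
which is the announced two-sided bound (recall $v_i=\int_{t_{i-1}}^{t_i}K^2(t,u)\,du$) and in particular shows that $\Sigma_{i-1}$ is invertible. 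It then remains to identify $J_{1,i}=\be_{t_{i-1}}[\delta_x(F_{i-1}+I_i)]$ with the value at the point $x$ of the associated conditional Gaussian density, which is exactly \eqref{eq:exp-J1d}. I would make this last step precise either by replacing $\delta_x$ by a mollifying sequence and passing to the limit, using continuity of the Gaussian density together with dominated convergence --- the required conditional non-degeneracy of $F_{i-1}+I_i$ being a consequence of $\Sigma_{i-1}\ge\la v_i\,\id_m>0$ --- or by invoking directly the definition of $\be_{t_{i-1}}[\delta_x(\cdot)]$ as a conditional density.

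The calculations involved are elementary; the two points that deserve a little care are, first, the measurability bookkeeping making $I_i$ conditionally Gaussian, which relies on the fact that on $[t_{i-1},t_i]$ the integrand $g_{i-1,\cdot}^k$ of \eqref{eq:notation-eta-i-gk-i} is $\cf_{t_{i-1}}$-measurable, hence adapted, so that the anticipating Stratonovich integral defining $I_i$ in \eqref{eq:exp-I-i-cdt-gauss} collapses to the Wiener integral with no trace correction; and second, the justification that $\be_{t_{i-1}}[\delta_x(F_{i-1}+I_i)]$ is a genuine conditional density, which is exactly where the elliptic lower bound (through the non-degeneracy of $\Sigma_{i-1}$) enters. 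Neither is a serious obstacle, so this proposition should be viewed as the routine ``main Gaussian contribution'' step of the scheme, the real work being deferred to the estimates on $J_{2,i}$ and $J_{3,i}$.
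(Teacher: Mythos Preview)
Your proposal is correct and follows essentially the same approach as the paper: identify $I_i$ as $V(X_{t_{i-1}})$ times a Wiener integral independent of $\cf_{t_{i-1}}$, compute the conditional covariance as $\Sigma_{i-1}=v_i\,V(X_{t_{i-1}})V^*(X_{t_{i-1}})$, and apply the ellipticity condition. If anything, you are more explicit than the paper about the measurability/independence bookkeeping and the interpretation of $\be_{t_{i-1}}[\delta_x(\cdot)]$; the paper dispatches both points in a single sentence.
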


\begin{pf}
The fact that $I_{i-1}$ is conditionally Gaussian is clear from
expression~(\ref{eq:exp-I-i-cdt-gauss}), and this immediately
yields our claim (\ref{eq:exp-J1d}). Furthermore,
\begin{eqnarray*}
\Sigma_{i-1}&:=& \operatorname{Cov}_{t_{i-1}}(I_i)=
\mathbf {E}_{t_{i-1}}\bigl[I_i I_i^*\bigr]
\\[-2pt]
&=&\mathbf{E}_{t_{i-1}} \Biggl[ \Biggl(\sum_{k=1}^d
V_k(X_{t_{i-1}})\int_{t_{i-1}}^{t_i}
K(t,u) \,dW_u^k \Biggr)\\[-2pt]
&&\hspace*{6pt}\qquad{}\times \Biggl(\sum
_{l=1}^d V_l^*(X_{t_{i-1}})\int
_{t_{i-1}}^{t_i} K(t,u) \,dW_u^l
\Biggr) \Biggr]
\\[-2pt]
&=& \sum_{k=1}^d V_k(X_{t_{i-1}})V_k^*(X_{t_{i-1}})
\int_{t_{i-1}}^{t_{i}} K^{2}(t,u) \,du,
\end{eqnarray*}
which completes the proof of our second claim, thanks to
Hypothesis \ref{hyp:vector-fields}.
\end{pf}

The previous proposition induces a natural choice for the
partition $(t_i)$ in terms of the kernel $K$:

%co5.10 #&#
\begin{condition}\label{cdt:points-partition}
We choose the partition $0=t_0<\cdots<t_n=t$ of $[0,t]$ such
that we have $\int_{t_{i-1}}^{t_{i}} K^{2}(t,u)   \,du=\frac
{t^{2H}}{n}=:\sigma_n^2$
for all $i=1,\ldots,n$.
\end{condition}

With this choice in hand, let us note the following properties
for further use:

%le5.11 #&#
\begin{lemma}\label{lem:prop-partition}
Let $t_0,\ldots,t_n$ be the partition of $[0,t]$ defined by
Condition~\ref{cdt:points-partition}. Then:
\begin{longlist}[(iii)]
\item[(i)]  The partition is constructed in a unique way.
\item[(ii)] We have $0\le t_i-t_{i-1} \le c_{H}   n^{-1/(2H)}$
for all $i=1,\ldots,n$.
\item[(iii)]  The parameters $c_{i}$ defined at (\ref{eq:def-ci-epsiloni}) are all equal to $\frac{\lambda t^{2H}}{4n}$.
\end{longlist}
\end{lemma}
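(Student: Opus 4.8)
The plan is to derive all three statements from the study of the continuous function $\psi(s):=\int_0^s K^2(t,u)\,du$ on $[0,t]$, where $K=K_H$ is the kernel of \eqref{eq:def-kernel-K}. First I would record two facts about $K(t,\cdot)$ for $H>1/2$: it is strictly positive on $(0,t)$, directly from the explicit integral formula \eqref{eq:def-kernel-K}; and $K^2(t,\cdot)\in L^1(0,t)$, since near $u=0$ one has $K(t,u)\sim c\,u^{1/2-H}$, hence $K^2(t,u)\sim c\,u^{1-2H}$, which is integrable precisely because $H<1$. Consequently $\psi$ is a continuous, strictly increasing bijection from $[0,t]$ onto $[0,\psi(t)]$, and $\psi(t)=\int_0^t K^2(t,u)\,du=R(t,t)=t^{2H}$. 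Therefore the requirement of Condition~\ref{cdt:points-partition}, together with $t_0=0$, forces $\psi(t_i)=i\,t^{2H}/n$, i.e. $t_i=\psi^{-1}(i\,t^{2H}/n)$ for $i=0,\ldots,n$; this is uniquely determined and automatically satisfies $t_n=\psi^{-1}(t^{2H})=t$. This proves (i).

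For (ii) the inequality $t_i-t_{i-1}\ge 0$ is immediate since $\psi$ is increasing, so only the upper bound requires work. The key ingredient is a pointwise lower bound on the kernel: for $H>1/2$ and $0<u<t$, bounding $v^{H-1/2}\ge u^{H-1/2}$ for $v\in[u,t]$ inside \eqref{eq:def-kernel-K} gives
$$K(t,u)=c_H\,u^{1/2-H}\int_u^t (v-u)^{H-3/2}v^{H-1/2}\,dv\ \ge\ \frac{c_H}{H-1/2}\,(t-u)^{H-1/2},$$
hence $K^2(t,u)\ge\kappa_H\,(t-u)^{2H-1}$ for a suitable constant $\kappa_H>0$. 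Inserting this into the defining identity $\int_{t_{i-1}}^{t_i}K^2(t,u)\,du=t^{2H}/n$ of Condition~\ref{cdt:points-partition} yields
$$(t-t_{i-1})^{2H}-(t-t_i)^{2H}\ \le\ \frac{2H}{\kappa_H}\,\frac{t^{2H}}{n}.$$
I would then invoke the elementary inequality $b^{2H}-a^{2H}\ge(b-a)^{2H}$, valid for $0\le a\le b$ because $2H>1$ makes $x\mapsto x^{2H-1}$ nondecreasing, and apply it with $b=t-t_{i-1}$ and $a=t-t_i$ (so $b-a=t_i-t_{i-1}$) to conclude $(t_i-t_{i-1})^{2H}\le (2H/\kappa_H)\,t^{2H}/n$, and hence $t_i-t_{i-1}\le (2H/\kappa_H)^{1/(2H)}\,t\,n^{-1/(2H)}\le c_H\,n^{-1/(2H)}$, using $t\le 1$.

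Part (iii) needs no real argument: by the definition \eqref{eq:def-ci-epsiloni} of $c_i$ and Condition~\ref{cdt:points-partition}, $c_i=\frac{\lambda}{4}\int_{t_{i-1}}^{t_i}K(t,s)^2\,ds=\frac{\lambda}{4}\cdot\frac{t^{2H}}{n}=\frac{\lambda\,t^{2H}}{4n}$ for every $i=1,\ldots,n$.

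I do not expect a genuine obstacle; the only point that needs care is checking that the lower bound on $K$ and the convexity-type inequality for $x\mapsto x^{2H}$ combine to produce exactly the exponent $-1/(2H)$ in (ii) rather than something weaker. As an alternative route, one may use the scaling identity $K(\lambda t,\lambda s)=\lambda^{H-1/2}K(t,s)$, which gives $\psi(s)=t^{2H}\psi_1(s/t)$ with $\psi_1(s)=\int_0^s K^2(1,u)\,du$, hence $t_i=t\,\psi_1^{-1}(i/n)$; this reduces the whole statement to the normalized case $t=1$ but is not strictly necessary.
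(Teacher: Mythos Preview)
Your proof is correct and follows essentially the same approach as the paper: both arguments hinge on the lower bound $K(t,u)\ge c_H(t-u)^{H-1/2}$ for $H>1/2$ and the superadditivity inequality $b^{2H}-a^{2H}\ge(b-a)^{2H}$ valid for $2H>1$. Your presentation of (ii) is in fact slightly more streamlined than the paper's, which introduces auxiliary points $v_\tau,w_\tau,x_\tau$ to reach the same conclusion.
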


\begin{pf}
Our\vspace*{1pt} first claim stems from the fact that $\int_{0}^{t}
K^{2}(t,u)   \,du=t^{2H}$ and $v\mapsto\int_{v}^{\tau}
K^{2}(t,u)   \,du$ is a strictly decreasing function for all
$0\le v\le\tau\le t$.

In order to prove our item (ii), recall expression
(\ref{eq:def-kernel-K}), from which we easily deduce the bound
%
%e68 #&#
\begin{equation}
\label{eq:low-bnd-K}
K(t,s)\ge c_H (t-s)^{H-1/2}.
\end{equation}
Consider now a fixed point $\tau\in(0,t]$ and $0\le v\equiv
v_{\tau} < \tau\le t$ such that $\int_{v}^{\tau} K^{2}(t,u)
\,du=\frac{t^{2H}}{n}$. Thanks to bound (\ref{eq:low-bnd-K})
we have $v_{\tau}\ge w_{\tau}$ where $w_{\tau}\equiv w$ is
defined by
\[
c_H \int_{w}^{\tau}
(t-u)^{2H-1} \,du=\frac{t^{2H}}{n}\quad \Longleftrightarrow \quad c_H
\bigl[(t-w)^{2H} - (t-\tau)^{2H} \bigr]=\frac{t^{2H}}{n}.
\]
In addition, since $2H>1$, we have $(t-w)^{2H} - (t-\tau)^{2H}\ge
(\tau-w)^{2H}$ for $w<\tau<t$, which means that $w_\tau\ge
x_\tau$ where $x_\tau$ is defined by the equation
$(\tau-x)^{2H}= \frac{c_H t^{2H}}{n}$. The latter equation can
be solved explicitly as $x_{\tau}=\tau- \frac{c_H
t}{n^{1/(2H)}}$, and summarizing our last considerations we end
up with the relation
\[
\tau- v_{\tau} \le\frac{c_H t}{n^{1/(2H)}},
\]
which easily yields our assertion (ii).
The proof of (iii) is straightforward.
\end{pf}

Now we state the following corollary to Proposition~\ref{prop:exp-J1-i-gauss}, whose immediate proof is left to the
reader:

%co5.12 #&#
\begin{corollary}\label{cor:low-bnd-J1}
Let $J_{1,i}$ be defined by (\ref{eq:def-J1-J2}). Then under
Hypothesis \ref{hyp:vector-fields} and
Condition~\ref{cdt:points-partition} we have for $\sigma_n^{2}=\frac
{t^{2H}}{n}$
%
%e69 #&#
\begin{equation}
\label{eq:low-bnd-J1i}
J_{1,i} \ge \frac{1}{(2\pi)^{m/2}(\Lambda\sigma_n^2)^{m/2}} \exp \biggl( -
\frac{|x-F_{i-1}|^{2}}{2\lambda  \sigma_n^{2}} \biggr).
\end{equation}
\end{corollary}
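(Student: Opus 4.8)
The plan is to read the bound off directly from Proposition \ref{prop:exp-J1-i-gauss}, once the partition has been specialized according to Condition \ref{cdt:points-partition}. First I would invoke relation \eqref{eq:exp-J1d}, which provides the exact conditionally Gaussian expression
\[
J_{1,i}=\frac{\exp\lp-\frac12 (x-F_{i-1})^{*}\,\Sigma_{i-1}^{-1}(x-F_{i-1})\rp}{(2\pi)^{m/2}\,|\Sigma_{i-1}|^{1/2}},
\]
together with the two-sided estimate $\la\,\kappa_{i}\,\id_{m}\le\Sigma_{i-1}\le\laa\,\kappa_{i}\,\id_{m}$ in the matrix sense, where I abbreviate $\kappa_{i}:=\int_{t_{i-1}}^{t_{i}}K^{2}(t,u)\,du$. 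Since Condition \ref{cdt:points-partition} forces $\kappa_{i}=\sigma_{n}^{2}=t^{2H}/n$ for every $i=1,\ldots,n$, this reads $\la\,\sigma_{n}^{2}\,\id_{m}\le\Sigma_{i-1}\le\laa\,\sigma_{n}^{2}\,\id_{m}$.

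From here the argument is elementary linear algebra carried out in two steps. For the denominator, monotonicity of the determinant on the cone of positive semidefinite matrices, combined with $\Sigma_{i-1}\le\laa\,\sigma_{n}^{2}\,\id_{m}$, gives $|\Sigma_{i-1}|=\det\Sigma_{i-1}\le(\laa\,\sigma_{n}^{2})^{m}$, hence $|\Sigma_{i-1}|^{1/2}\le(\laa\,\sigma_{n}^{2})^{m/2}$. For the exponent, the lower bound $\Sigma_{i-1}\ge\la\,\sigma_{n}^{2}\,\id_{m}$ inverts to $\Sigma_{i-1}^{-1}\le(\la\,\sigma_{n}^{2})^{-1}\,\id_{m}$, since inversion reverses the Loewner order of positive definite matrices; therefore
\[
(x-F_{i-1})^{*}\,\Sigma_{i-1}^{-1}(x-F_{i-1})\le\frac{|x-F_{i-1}|^{2}}{\la\,\sigma_{n}^{2}}.
\]
Substituting these two estimates into the expression for $J_{1,i}$ and using that $u\mapsto e^{-u}$ is decreasing yields exactly \eqref{eq:low-bnd-J1i}.

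There is essentially no obstacle here: the only two facts requiring a word of justification are that the determinant and the matrix inverse respect the Loewner order in the directions used above, both of which are classical. All the analytic substance — the conditionally Gaussian density formula and the ellipticity-driven control of $\Sigma_{i-1}$ — is already contained in Proposition \ref{prop:exp-J1-i-gauss}, while Condition \ref{cdt:points-partition} is precisely what renders $\kappa_{i}$ independent of $i$, allowing $\sigma_{n}^{2}$ to be factored out uniformly over the partition. Hence the statement is immediate, as claimed.
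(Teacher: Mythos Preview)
Your proof is correct and matches the paper's approach exactly: the paper states that the corollary's ``immediate proof is left to the reader,'' and what you wrote is precisely that immediate argument---read off the Gaussian density from Proposition~\ref{prop:exp-J1-i-gauss}, specialize $\int_{t_{i-1}}^{t_i}K^2(t,u)\,du$ to $\sigma_n^2$ via Condition~\ref{cdt:points-partition}, and use Loewner-order monotonicity of the determinant and inverse.
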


Summarizing the considerations of this section, we have obtained
that the main contribution to $\mathbf{E}_{t_{i-1}} [\delta_x(F_i)]$,
$J_{1,i}$, is of the order given by
(\ref{eq:low-bnd-J1i}). Most of our work is now devoted to prove
that the contributions of $J_{2,i}$ and $J_{3,i}$ are
smaller than a fraction of (\ref{eq:low-bnd-J1i}) if $M,n$ are
conveniently chosen.

%s5.5 #&#
\subsection{Upper bounds for $J_{2,i}$}

We start the control of $J_{2,i}$ by stating a bound in terms of the
localization we have chosen:

%pr5.13 #&#
\begin{proposition} \label{prop:bound-J2}
Let $J_{2,i}$ be the quantity defined by (\ref{eq:def-J1-J2}).
Then there exists positive constants $c_{\lambda,\Lambda}$, $k_1, k_2$
and $p_1$ independent of $n$ such that
%
%e70 #&#
\begin{eqnarray}
| J_{2,i}| &\le & c_{\lambda,\Lambda} \bigl({\sigma_n^2}
\bigr)^{-k_2} L_{n,i}^{\gamma,p}(k_1,p_1)\nonumber\\
\eqntext{\mbox{where } L_{n,i}^{\gamma,p}(k_1,p_1)
\equiv\|1-\Phi_M\Phi_{c_i,\epsilon_i}\| _{k_1,p_1,t_{i-1}},}
\end{eqnarray}
with $\sigma_n^2=\frac{t^{2H}}{n}$, and\vspace*{1pt} where we recall that
the norms $\|\cdot\|_{k,p,t}$ have been introduced at equation~(\ref{eq:def-conditional-sobolev}) and the random variables $\Phi_M,\Phi
_{c_i,\epsilon_i}$ at equation (\ref{eq:star}).
\end{proposition}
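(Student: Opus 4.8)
The plan is to eliminate the Dirac mass $\delta_x$ hidden in $J_{2,i}$ by a conditional integration by parts with respect to the Wiener process $W$ on the interval $[t_{i-1},1]$, which reduces the estimate to a bound on a single Malliavin weight. What makes this step elementary here is that the conditional Malliavin covariance matrix of $I_i$ is \emph{deterministic} given $\cf_{t_{i-1}}$ and uniformly elliptic of order $\sigma_n^2$, so no localization of the covariance matrix is needed --- the factor $\Phi_M\Phi_{c_i,\epsilon_i}$ is simply carried along inside the weight. Let me first record the relevant facts about $I_i$. Since $X_{t_{i-1}}\in\cf_{t_{i-1}}$ has vanishing Malliavin derivative on $[t_{i-1},1]$, expression \eqref{eq:exp-I-i-cdt-gauss} gives $\mathrm{D}_u I_i=V(X_{t_{i-1}})\,K(t,u)\,\1_{[t_{i-1},t_i]}(u)$ (as an $\R^{m\times d}$-valued object) and $\mathrm{D}^j I_i\equiv 0$ for $j\ge2$ on $[t_{i-1},1]$. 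Hence $\be_{t_{i-1}}[I_i]=0$, the conditional covariance is $\Gamma_{I_i,t_{i-1}}=V(X_{t_{i-1}})V^{*}(X_{t_{i-1}})\,\sigma_n^2$, and by Hypothesis \ref{hyp:vector-fields} one gets $\la\,\sigma_n^2\,\id_m\le\Gamma_{I_i,t_{i-1}}\le\laa\,\sigma_n^2\,\id_m$. In particular $\det(\Gamma_{I_i,t_{i-1}})^{-1}\le(\la\sigma_n^2)^{-m}$ is $\cf_{t_{i-1}}$-measurable and bounded, and $\|I_i\|_{k,p,t_{i-1}}\le c_{\laa,k,p}\,\sigma_n$ for every $k,p$, since only the levels $j=0$ and $j=1$ contribute and each is of order $\sigma_n$. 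Together with the fact that $F_{i-1}$ is $\cf_{t_{i-1}}$-measurable and lies in $(D^\infty)^m$ by Proposition \ref{prop:moments-sdes}, this ensures that all the hypotheses of Proposition \ref{prop:int-parts-cond} are met with $F=I_i$ and $Z_s=F_{i-1}$.

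Next, writing $g_x(y):=\1_{\{y\ge x\}}$, so that $\partial_{(1,\ldots,m)}g_x=\delta_x$ in the distributional sense, I would apply the conditional integration by parts formula \eqref{eq:int-parts-cond} with multi-index $\alpha=(1,\ldots,m)$, weight $G=\Phi_M\Phi_{c_i,\epsilon_i}-1$ and $s=t_{i-1}$; this is made rigorous in the usual way, by mollifying $\delta_x$ and passing to the limit using the conditional non-degeneracy of $F_{i-1}+I_i$. This yields
\[
J_{2,i}=\be_{t_{i-1}}\!\lc\1_{\{F_{i-1}+I_i\ge x\}}\,H^{t_{i-1}}_{(1,\ldots,m)}\!\lp I_i,\,\Phi_M\Phi_{c_i,\epsilon_i}-1\rp\rc .
\]
Since $0\le\1_{\{F_{i-1}+I_i\ge x\}}\le1$, conditional Jensen's inequality gives $|J_{2,i}|\le\bigl\|H^{t_{i-1}}_{(1,\ldots,m)}(I_i,\Phi_M\Phi_{c_i,\epsilon_i}-1)\bigr\|_{0,p_1,t_{i-1}}$ for any $p_1\ge1$, and I would bound the right-hand side by the norm estimate in Proposition \ref{prop:int-parts-cond}, used with a multi-index of length $m$ and conjugate exponents $q_1,q_2,q_3$ chosen large enough that $\tfrac1{q_1}+\tfrac1{q_2}+\tfrac1{q_3}\le\tfrac1{p_1}$. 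This produces, for fixed integers $a_1,a_2$ and $k_1,p_1$ that do not depend on $n$,
\[
|J_{2,i}|\le c\,\bigl\|\det(\Gamma_{I_i,t_{i-1}})^{-1}\bigr\|_{b,t_{i-1}}^{a_1}\,\|I_i\|_{k_1+2,\,b',\,t_{i-1}}^{a_2}\,\bigl\|1-\Phi_M\Phi_{c_i,\epsilon_i}\bigr\|_{k_1,p_1,t_{i-1}},
\]
where $b,b'$ are the (irrelevant) Sobolev exponents dictated by Proposition \ref{prop:int-parts-cond}, and $k_1=m$.

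Finally I would insert the two bounds established in the first paragraph: the first factor is at most $(\la\sigma_n^2)^{-a_1m}$, which is deterministic, the second at most $c_\laa\,\sigma_n^{a_2}=c_\laa(\sigma_n^2)^{a_2/2}$, while the third is precisely $\tz_n^i(k_1,p_1)$. Collecting the resulting (explicit, $n$-independent) powers of $\sigma_n^2$ gives
\[
|J_{2,i}|\le c_{\la,\laa}\,(\sigma_n^2)^{\,a_2/2-a_1m}\,\tz_n^i(k_1,p_1),
\]
and since $\sigma_n^2=t^{2H}/n\le1$ one may replace the exponent by $-k_2$ for any integer $k_2\ge\max(a_1m-\tfrac{a_2}{2},\,1)$, which is exactly the assertion; the constant $c_{\la,\laa}$ and the integers $k_1,k_2,p_1$ depend only on $m$, $d$, $\la$, $\laa$ and $\|V\|_\infty$, not on $n$. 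The only genuinely delicate point is the first step, namely justifying the conditional integration by parts against the Dirac mass: one has to verify the conditional non-degeneracy of $F_{i-1}+I_i$ given $\cf_{t_{i-1}}$ (which follows from $\Gamma_{I_i,t_{i-1}}\ge\la\sigma_n^2\,\id_m$) and carry out the mollification limit. The remainder is the bookkeeping of powers of $\sigma_n^2$ described above.
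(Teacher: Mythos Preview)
Your proof is correct and follows essentially the same approach as the paper: apply the conditional integration by parts formula of Proposition \ref{prop:int-parts-cond} with $F=I_i$, $Z_s=F_{i-1}$, $G=\Phi_M\Phi_{c_i,\epsilon_i}-1$ and multi-index $(1,\ldots,m)$, then exploit that $\Gamma_{I_i,t_{i-1}}=\sigma_n^2\,V(X_{t_{i-1}})V^*(X_{t_{i-1}})$ is $\cf_{t_{i-1}}$-measurable and uniformly elliptic of order $\sigma_n^2$ to bound the weight. Your write-up is in fact slightly more careful than the paper's about the mollification step and the final bookkeeping of powers of $\sigma_n^2$.
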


\begin{pf}
Our strategy hinges on the conditional integration by parts
formula we have introduced in Proposition~\ref{prop:int-parts-cond}, which gives for some constants $k_i,p_i$,
$i=1,\ldots,4$,
%
%e71 #&#
\begin{eqnarray}
\quad\hspace*{8pt} |J_{2,i}| &=&  \bigl\llvert \mathbf{E}_{t_{i-1}}
\bigl[\mathbf{1}_{\{
F_{i-1}+I_{i} > x\}} H^{t_{i-1}}_{(1,\ldots,m)}(I_i,
1-\Phi_{M}\Phi_{c_i,\epsilon_i}) \bigr]\bigr\rrvert
\nonumber
\\[-8pt]
\label{eq:bnd-H1d-I-Theta}
\\[-8pt]
\nonumber
&\le& c_{1,q} \bigl\|\det(\Gamma_{I_i,t_{i-1}})^{-1}
\bigr\|_{p_3,t_{i-1}}^{k_3} \|I_i\|_{k_2,p_2,t_{i-1}}^{k_4}
\| 1-\Phi_{M}\Phi_{c_i,\epsilon_i}\|_{k_1,p_1,t_{i-1}}.
\end{eqnarray}
Here, we have used that $\mathbf{1}_{\{F_{i-1}+I_{i} > x\}}\le1$.

In order to bound the right-hand side of
(\ref{eq:bnd-H1d-I-Theta}) we start by computing the Malliavin
derivatives of $I_i$.
Recall that due to (\ref{eq:exp-I-i-cdt-gauss}), we have for
$j=1, \ldots,  d$, $\alpha>1$ and
$r,r_{1},\ldots,r_{\alpha}>t_{i-1}$ that
\[
\label{eq:expression-deriv-I} \mathrm{D}_r^{j} I_i=V_j(X_{t_{i-1}})K(t,r)
\mathbf{1}_{[t_{i-1},
t_{i}]}(r)\quad \mbox{and}\quad \mathrm{D}^{\alpha}_{r_1\ldots r_\alpha}
I_i=0.
\]
As far as $\Gamma_{I_i,t_{i-1}}$ is concerned, it is a
conditionally deterministic quantity such that for
$i, j=1, \ldots, d$, we can write
\begin{eqnarray*}
\Gamma_{I_i,t_{i-1}}&=&\sum_{j=1}^{d}
\bigl\langle\mathrm{D}^{j} I_i, \mathrm{D}^{j}
I_i^{*} \bigr\rangle_{L^2([t_{i-1},t_i])}
\\
&=&\sum_{j=1}^{d}V_{j}(X_{t_{i-1}})V_{j}^{*}(X_{t_{i-1}})
\int_{t_{i-1}}^{t_i} K^2(t,s)\,ds =
\sigma_n^2 V(X_{t_{i-1}})V^{*}(X_{t_{i-1}}).
\end{eqnarray*}
Using the ellipticity condition of Hypothesis
\ref{hyp:vector-fields}(2) for $V$, we thus obtain that
\[
0\leq \Gamma_{I_i,t_{i-1}}^{-1}\leq\frac{1}{\lambda\sigma_n^2}Id_m.
\]
Therefore $\|I_i\|_{k_2,p_2,t_{i-1}}^{k_4}\le C ({\sigma
_n^2\Lambda} )^{{k_4}/2}$ and
\[
\bigl\|\det(\Gamma_{I_i,t_{i-1}})^{-1}\bigr\|_{p_3,t_{i-1}}^{k_3}
\le \biggl(\frac
{1}{\lambda\sigma_n^2} \biggr)^{mk_3}.
\]
Substituting these inequalities in (\ref{eq:bnd-H1d-I-Theta}), our
proof is now finished.
\end{pf}

From the above Proposition~\ref{prop:bound-J2}, we see that in order to
get a
convenient bound for $J_{2,i}$ we need to study the random variable
$\|1-\Phi_{M}\Phi_{c_i,\epsilon_i}\|_{k_1,p_1,t_{i-1}}$. A suitable
information for us will be the following bound:

%pr5.14 #&#
\begin{proposition} \label{prop:bound-norm-phi}
Assume Condition~\ref{cdt:points-partition} and consider any $\gamma
\in(\frac{1}2,H)$ and $k_{1},p_{1}\ge1$. Let
$L_{n,i}^{\gamma,p}(k_1,p_1)=\|1-\Phi_{M}\Phi_{c_i,\epsilon_i}\|
_{k_1,p_1,t_{i-1}}$
be the random variable defined at Proposition~\ref{prop:bound-J2}.
Then for any $p\ge\frac{k_1}2, \gamma>0$ [recall
that $\Phi_M\equiv\Phi_{M}(\mathrm{N}_{\gamma,p}^i(B))$] such that
$2p(H-\gamma)-2>k_1H$
%\txx{red}{large enough so that
%$(H-\ga)p>3/2$ with $\frac12<\gamma<H$,
the following holds true: For any $\eta>0$ there exists
$c_{p,k_1,p_1,\gamma,H,M,\eta}>0$ such that
%
%e72 #&#
\begin{equation}
\label{eq:bnd-Lni}
\mathbf{E}\bigl[ L_{n,i}^{\gamma,p}(k_1,p_1)
\bigr] \leq c_{p,k_1,p_1,\gamma,H,M,\eta} n^{-\eta}.
\end{equation}
\end{proposition}

\begin{pf}
Let us first highlight what the parameters involved in the proof are:
recall that $c_i$ and $\epsilon_i$ were defined in
(\ref{eq:def-ci-epsiloni}). And although not explicitly written,
$\Phi_{M}$ depends on $\gamma$ and $p$. From now on, and through the
proof we fix the values of $\gamma$, $H$, $k_1$, $p_1$, $n$ and $p$
satisfying the inequalities in the statement of the proposition.

As a preliminary step, we also observe that, due to the H\"older
inequality, it is enough to find a proper bound for
$\|1-\Phi_{M}\|_{k_1,p_1,t_{i-1}}$ and $\|\Phi_{M}(1-\Phi
_{c_i,\epsilon
_i})\|_{k_1,p_1,t_{i-1}}$ separately. We\vspace*{1pt} first handle the term $\|
1-\Phi
_{M}\|_{k_1,p_1,t_{i-1}}$.

Now we will obtain a general estimate to be used in the proof. By
Chebyshev's inequality, for any $k_2\ge1$ and $\frac{1}2<\gamma<H$,
%
%e73 #&#
\begin{equation}
\label{eq:Markov-NiB} \mathbf{E} \bigl[|1-\Phi_{M}|^2 \bigr]
\leq \mathbf{P}\bigl(\mathrm{N}_{\gamma,p}^i(B)>M-1\bigr) \leq
\frac{\mathbf{E} [|\mathrm{N}_{\gamma,p}^i(B)|^{k_2}
]}{(M-1)^{k_2}}.
\end{equation}
We now find an upper bound for $\mathbf{E}[|\mathrm{N}_{\gamma
,p}^i(B)|^{k_2}]$. A simple
application of Jensen's inequality yields
%
%e74 #&#
\begin{eqnarray}
\mathbf{E} \bigl[\bigl|\mathrm{N}_{\gamma,p}^i(B)\bigr|^{k_2}
\bigr] &=& \mathbf{E} \biggl[ \biggl(\int_{t_{i-1}}^{t_i}
\!\int_{t_{i-1}}^{t_i}\frac
{|B_v-B_u|^{2p}}{|v-u|^{2p\gamma+2}}\,du\,dv
\biggr)^{k_2} \biggr]
\nonumber
\\
\label{eq:bnd-NiB}
&\leq & c |t_{i}-t_{i-1}|^{2(k_2-1)} \biggl(\int
_{t_{i-1}}^{t_i}\!\int_{t_{i-1}}^{t_i}
\frac{\mathbf{E} [|B_{v}-B_{u}|^{2pk_2}
 ]}{|v-u|^{(2p\gamma+2)k_2}}\,du\,dv \biggr)\\
 & \leq &  c_{k_2,p,\gamma,H} |t_i-t_{i-1}|^{2k_2p(H-\gamma)}.\nonumber
\end{eqnarray}
We remark that all above integrals and expectations are finite due to the
condition $2p(H-\gamma)-2>k_1H$. Furthermore, the quantity $
|t_i-t_{i-1}|^{2k_2p(H-\gamma)}$ can be made as small as we wish by
taking $k_{2},p$ and $n$ large enough. We will play on these parameters
later on.

Let us start the estimation for the high-order
derivatives of $1-\Phi_{M}$. For this, we first notice that, for any
$\mathbf{r}$ of length greater or equal to 1 and any $\mathbf{i}$, we
have $\mathbf{D}^{\mathbf{i}}_{\mathbf{r}}(1-\Phi_{M})=-\mathbf
{D}^{\mathbf{i}}_{\mathbf{r}}\Phi_{M}$, so that we shall bound $\mathbf{D}^{\mathbf{i}}_{\mathbf{r}}\Phi_{M}$ in the sequel.
Next we need to define the
set of multi-indices $\mathcal{A}_n=\{(l_1,\ldots,l_n);l_i\in\{0,\ldots,n\},
l_1+\cdots+l_n=n\}$. In fact, one can easily check that there exist
(explicit) random variables $\mu_{p,l,\gamma,H}^{\mathbf{i}}(\mathbf
{r})$, defined for $l\le n\le k_1$, $\mathbf{r}=(r_1,\ldots,r_n)$ with
$r_1\le\cdots\le r_n$ and $\mathbf{i}=(i_1,\ldots,i_n)\in\{1,\ldots,d\}^n$,
such that the following inequality holds for a positive constant
$C_{p,l,\gamma,H}(\mathbf{i},\mathbf{r})$:
%
%e75 #&#
\begin{equation}
\label{eq:bnd-D-Phi-M-1}
\bigl|\mathbf{D}^{\mathbf{i}}_{\mathbf{r}}\Phi_{M}\bigr|\le
\sum_{l=1}^n\bigl|\partial_z^l
\Phi _{M}\bigl(\mathrm{N}_{\gamma,p}^i(B)\bigr)\bigr|\bigl|
\mu_{p,l,\gamma,H}^{\mathbf
{i}}(\mathbf{r})\bigr|,
\end{equation}
and where the random variables $\mu_{p,l,\gamma,H}^{\mathbf
{i}}(\mathbf{r})$ satisfy
%
%e76 #&#
\begin{eqnarray}
&& \bigl|\mu_{p,l,\gamma,H}^{\mathbf{i}}(\mathbf{r})\bigr|\le C_{p,l,\gamma,H} \prod
_{\mathbf{l}\in\mathcal{A}_l; j=1}^l\mu_{p,l_j,\gamma,H}\nonumber\\
\eqntext{\ds\mbox{with }
\mu_{p,l,\gamma,H}= \int_{t_{i-1}}^{t_i}\! \int
_{t_{i-1}}^{t_i} \frac{|B_\xi-B_\eta|^{2p-l}}{|\xi-\eta|^{2\gamma p+2}}\,d\xi \,d\eta.}
\end{eqnarray}
Note that all the integrals above are well defined due to the
restrictions $2p\ge k_1$ and $2p(H-\gamma)-2>k_1H$.

Next, we estimate the moments of $\mu_{p,l,\gamma, H}^{\mathbf
{i}}(\mathbf{r})$ as follows. For any $\kappa\in\mathbb{N}$, we have
%
%e77 #&#
\begin{eqnarray}
&& \mathbf{E}\bigl[|\mu_{p,l,\gamma, H}|^{\kappa}\bigr]\nonumber\\
&&\label{eq:74}\qquad\leq
C_{p,l,\gamma, H}(t_i-t_{i-1})^{2(\kappa-1)}\int
_{t_{i-1}}^{t_i}\!\int_{t_{i-1}}^{t_i}
\frac{\mathbf{E} [|B_\xi-B_\eta|^{(2p-l)\kappa
} ]}{|\xi-\eta
|^{(2\gamma
p+2)\kappa}}\,d\xi \,d\eta
\\
\nonumber
&&\qquad\leq  c_{p,l,\kappa,\gamma,H} |t_i-t_{i-1}|^{2p\kappa(H-\gamma
)-l\kappa
H}.
\end{eqnarray}
Therefore $\|\mu_{p,l,\gamma, H}\|_{\kappa}\le c_{p,l,\kappa,\gamma
,H}
|t_i-t_{i-1}|^{2p(H-\gamma)-l
H}$. Note again that here, we have used the hypothesis $2p(H-\gamma)-2>k_1H$.

Let us now turn to the estimation of $\mathbf{D}^{n}_{\mathbf{r}}\Phi_{M}$.
Starting from relation (\ref{eq:bnd-D-Phi-M-1}), we get for $n\ge1$,
\begin{eqnarray*}
&& \bigl\|\mathbf{D}^{n}_{\mathbf{r}}\Phi_{M}
\bigr\|^2_{|\mathcal
{H}([t_{i-1},t_i])|^{\otimes n}} \\
&&\qquad\le  \sum_{l,m=1}^{n}
\prod_{\mathbf{l}\in\mathcal{A}_l; j=1}^{l} \prod
_{\mathbf{l}\in
\mathcal
{A}_m; k=1}^{m} |\mu_{p,l_j,\gamma,H}| |
\mu_{p,m_k,\gamma,H}|
\\
&&\hspace*{91pt}\qquad\qquad{}\times \bigl|\partial_z^l\Phi_{M}\bigl(
\mathrm{N}_{\gamma,p}^i(B)\bigr)\bigr| \bigl|\partial_z^m
\Phi_{M}\bigl(\mathrm{N}_{\gamma,p}^i(B)\bigr)\bigr| \\
&& \qquad\quad{}\times\int
_{[t_{i-1},t_i]^{2n}}\prod_{i=1}^n|r_i-s_i|^{2(H-1)}
\,dr_i\,ds_i.
\end{eqnarray*}
Finally, plugging our previous inequalities (\ref{eq:bnd-NiB}) and
(\ref{eq:74}) and resorting to H\"older's inequality with $\mathbf
{q}=(q_1,\ldots,q_{l+m+1})$ where $q_1^{-1}+\cdots+q_{l+m+1}^{-1}=1$, we have
for $k_1\ge1$,
\begin{eqnarray*}
&& \mathbf{E} \bigl[\bigl\|\mathbf{D}^{k_1}\Phi_{M}
\bigr\|^{p_1}_{L^2([t_{i-1},t_i]^n)} \bigr]\\
 &&\qquad\leq   c_{p,k_1,\kappa,\gamma,H} \|
\Phi_M\|^{p_1}_{{k_1},\infty} \mathbf{P}^{1/q_1}
\bigl(\mathrm{N}_{\gamma,p}^i(B)>M-1\bigr)
\\
&&\qquad\quad{}\times \sum_{l,m=1}^{k_1} \prod
_{\mathbf{l}\in\mathcal{A}_l; j=1}^{l} \prod_{\mathbf{l}\in
\mathcal
{A}_m; k=1}^{m}
\|\mu_{p,l_j,\gamma,H}\|^{p_1}_{q_{j+1}} \|\mu_{p,m_k,\gamma,H}
\|^{p_1}_{q_{l+1+k}}\\
&&\qquad\quad{}\times \Biggl(\int_{[t_{i-1},t_i]^{2n}} \prod
_{i=1}^{k_1}|r_i-s_i|^{2(H-1)}
\,dr_i\,ds_i \Biggr)^{p_1}
\\
&&\qquad\leq   c_{p,k_1,p_1,\mathbf{q},\gamma,H,k_2} \|\Phi_M\|^{p_1}_{n,\infty}
|t_i-t_{i-1}|^{(k_2q_1^{-1}+4)pp_1(H-\gamma)},
\end{eqnarray*}
where we have set $\|\Phi_M\|_{n,\infty}:=\sum_{l=0}^n\|\partial
_z^l\Phi
_M\|_\infty$.
Therefore the result follows from (\ref{eq:bnd-NiB}) and the above
inequality by noting that $|t_i-t_{i-1}|\le c_{H}   n^{-1/(2H)}$ and
taking $k_2$ big enough. We remark that this result also
gives that\break $\|\Phi_{M}\|_{k_1,p_1,t_{i-1}}\le c_{p,k_1,p_1,\mathbf
{q},\gamma,H}$.

The calculation for $\|\Phi_{M}(1-\Phi_{c_i,\epsilon_i})\|
_{k_1,p_1,t_{i-1}}$ is similar, recalling that the norm of the
Malliavin derivatives of $\Phi_{M}$ are bounded, and noting that
instead of applying the operator $\mathbf{D}^{k_1}_{\mathbf{r}}$, it
is better
to use directly the derivative operator $\mathrm{D}^{k_1}_{\mathbf{r}}$
with Lemma~\ref{lema:cotaQ}. We skip details for sake of conciseness.
Observe, however, that in this case, the derivatives of $1-\Phi
_{c_i,\epsilon_i}$ blow up as $c_{i},\varepsilon_{i}$ get small.
Still, one
remarks that the final proof is based on the fact that for any
$k_{6}>0$, Chebyshev's inequality and the proof of Lemma~\ref{lem:app1}
(postponed to the \hyperref[sec:app1]{Appendix}) imply that
\begin{eqnarray*}
&&\mathbf{P} \Biggl(\sum_{j=1}^d\int
_{t_{i-1}}^{t_i}\bigl|\mathrm {D}^j_rR_i\bigr|^2
\,dr>\frac{\lambda}8\int_{t_{i-1}}^{t_i}K^{2}(t,s)
\,ds \Biggr)
\\
&&\qquad\le \biggl(\frac{\lambda}8\int_{t_{i-1}}^{t_i}K^{2}(t,s)
\,ds \biggr)^{-k_{6}} \mathbf{E} \Biggl[ \Biggl(\sum
_{j=1}^d\int_{t_{i-1}}^{t_i}\bigl|
\mathrm{D}^j_rR_i\bigr|^2\,dr
\Biggr)^{k_{6}} \Biggr]
\\
&&\qquad\le c \bigl( \lambda\sigma_n^2 \bigr)^{-k_{6}}(t_i-t_{i-1})^{(2\gamma+ 1) k_{6}}
\le cn^{-({\gamma}/{H})k_{6}}.
\end{eqnarray*}
Here we have used the result in Lemma~\ref{lem:prop-partition}(ii) and
Condition \ref{cdt:points-partition}.
\end{pf}
%
%Summarizing the considerations of the current section (namely
%gathering the estimates of Propositions \ref{prop:bound-J2} and
%\ref{prop:bound-norm-phi}), we have thus proven the following
%bound for $J_{2,i}$:
%\begin{proposition} \label{prop:bound-J2-2}
%Let $J_{2,i}$ be the quantity defined by \eqref{eq:def-J1-J2}.
%Then there exists a constant $c>0$, independent of $n$, such that for
%any $p>0$:
%\[\be[|
%J_{2,i}|] \le c
%\lp\frac{1}{tn}\rp^{p}.
%\]
%\end{proposition}

%%%%%%%%%%%%%%%%%%%%%%%%%%%%%%%%%%%%%

%s5.6 #&#
\subsection{Upper bounds for $J_{3,i}$}

We now turn to the main technical issue in our computations,
namely the bound on $J_{3,i}$. Our aim is thus to prove the
following proposition:

%pr5.15 #&#
\begin{proposition}\label{prop:bnd-J3i}
Let $J_{3,i}$ be the quantity defined by (\ref{eq:def-J3}). Then
there exist $c>0$ and $k>0$ such that for any $H-\frac{1}2<\gamma<H$,
%
%e78 #&#
\begin{equation}
\label{eq:bnd-J3i}
|J_{3,i}| \le \frac{c_{M,V,m} (t_i-t_{i-1})^{\gamma}}{(\sigma_n^2)^{m/2} } \le
\frac{c_{M,V,m}}{n^{\gamma/2H}   (\sigma_n^2)^{m/2} }.
\end{equation}
\end{proposition}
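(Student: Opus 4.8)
\noindent
The plan is to first rescale the increment of $W$ over $[t_{i-1},t_i]$ so that the conditionally Gaussian part of $F_i$ becomes of order one, then to remove the Dirac mass by a localized conditional integration by parts, and finally to exploit that $R_i$ has size $(t_i-t_{i-1})^\gamma$ on the localization set. Concretely, since $\int_{t_{i-1}}^{t_i}K(t,s)^2ds=\sigma_n^2$ by Condition \ref{cdt:points-partition}, I set $\tilde I:=\sigma_n^{-1}(I_i+\rho R_i)$; as $F_{i-1}\in\cf_{t_{i-1}}$ and $\delta_x(F_{i-1}+\sigma_n z)=\sigma_n^{-m}\delta_{(x-F_{i-1})/\sigma_n}(z)$ on $\R^m$, one has $\partial_{x_j}\delta_x(F_{i-1}+I_i+\rho R_i)=\sigma_n^{-(m+1)}(\partial_{y_j}\delta_y)(\tilde I)\big|_{y=(x-F_{i-1})/\sigma_n}$. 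Writing $\partial_{y_j}\delta_y(z)=-\partial_{\alpha_j}\1_{\{z\ge y\}}$ for the length-$(m+1)$ multi-index $\alpha_j=(1,\ldots,j,j,\ldots,m)$ (with index $j$ repeated), I apply the localized conditional integration by parts of Proposition \ref{prop:int-parts-cond} and the bound \eqref{Hub} with $F=\tilde I$, localizer $\Phi_M\Phi_{c_i,\epsilon_i}$ and $G=R_i^j$ (after a routine mollification of $\1_{\{\cdot\ge y\}}$ and a Fubini argument in the $\rho$ and $\cf_{t_{i-1}}$ variables). Bounding the resulting indicator by $1$ then gives
\begin{equation*}
|J_{3,i}|\le c\,\sigma_n^{-(m+1)}\sum_{j=1}^m\int_0^1\|\det(\Gamma_{\tilde I,t_{i-1}})^{-1}\Phi_{M'}\Phi_{c_i',\epsilon_i'}\|_{p_3,t_{i-1}}^{k_3}\,\|\tilde I\,\Phi_{M'}\Phi_{c_i',\epsilon_i'}\|_{k_2,p_2,t_{i-1}}^{k_4}\,\|R_i^j\,\Phi_{M'}\Phi_{c_i',\epsilon_i'}\|_{k_1,p_1,t_{i-1}}\,d\rho.
\end{equation*}

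\noindent
The next step is to estimate the three factors on the localization set. On $\{\Phi_{c_i',\epsilon_i'}\neq0\}$ one has $\sum_{\ell=1}^d\int_{t_{i-1}}^{t_i}|\mathrm{D}^\ell_rR_i|^2dr\le c_i+\epsilon_i=\tfrac38\lambda\sigma_n^2$ by \eqref{eq:def-ci-epsiloni}; combined with $\Gamma_{I_i,t_{i-1}}=\sigma_n^2V(X_{t_{i-1}})V^*(X_{t_{i-1}})\ge\lambda\sigma_n^2\,Id_m$ (Hypothesis \ref{hyp:vector-fields}) and $|a+b|^2\ge\tfrac12|a|^2-|b|^2$, this yields $\Gamma_{\tilde I,t_{i-1}}\ge\tfrac18\lambda\,Id_m$, so the first factor is bounded by a constant. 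Since $I_i$ is conditionally Gaussian with $\Gamma_{I_i,t_{i-1}}\le\Lambda\sigma_n^2\,Id_m$ and has vanishing Malliavin derivatives of order $\ge2$, the second factor $\|\tilde I\,\Phi_{M'}\Phi_{c_i',\epsilon_i'}\|_{k_2,p_2,t_{i-1}}^{k_4}$ is also bounded by some $c_M$. The heart of the matter is the third factor: on $\{\Phi_{M'}\neq0\}$ Proposition \ref{prop:rel-normB-N} gives $\|B\|_{t_{i-1},t_i,\gamma}\le c_M$, hence by Propositions \ref{prop:moments-sdes} and \ref{prop:cotes-hol-sol} (and \eqref{eq:DWDB} to pass from $B$- to $W$-derivatives) the map $u\mapsto X_u$ and its Malliavin derivatives have $\gamma$-H\"older seminorms on $[t_{i-1},t_i]$ bounded by $c_M$; therefore $V_k(X_{\eta_i(u)})-V_k(X_{t_{i-1}})$, and all its Malliavin derivatives, are bounded by $c_M(t_i-t_{i-1})^\gamma$ for $u\in[s,t]$. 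Using that $\partial_uK(u,s)>0$ and $\int_s^t\partial_uK(u,s)du=K(t,s)$ for $H>\tfrac12$, the process $Q^k$ of \eqref{eq:def-Qk} and all its Malliavin derivatives are bounded by $c_M(t_i-t_{i-1})^\gamma K(t,s)$, and feeding this into the conditional Meyer inequalities for the Skorohod integral \eqref{eq:def-Ri} (together with its trace term) yields
\begin{equation*}
\|R_i^j\,\Phi_{M'}\Phi_{c_i',\epsilon_i'}\|_{k_1,p_1,t_{i-1}}\le c_M(t_i-t_{i-1})^\gamma\lp\int_{t_{i-1}}^{t_i}K(t,s)^2ds\rp^{1/2}=c_M(t_i-t_{i-1})^\gamma\,\sigma_n.
\end{equation*}

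\noindent
Inserting the three bounds into the inequality for $|J_{3,i}|$, the powers of $\sigma_n$ collapse to $\sigma_n^{-(m+1)}\cdot\sigma_n=\sigma_n^{-m}=(\sigma_n^2)^{-m/2}$; integrating in $\rho\in[0,1]$ and summing over $j$ gives the first estimate in \eqref{eq:bnd-J3i}, and the second follows at once from $t_i-t_{i-1}\le c_H\,n^{-1/(2H)}$ (Lemma \ref{lem:prop-partition}(ii)). The main obstacle is the bound on $\|R_i^j\,\Phi_{M'}\Phi_{c_i',\epsilon_i'}\|_{k_1,p_1,t_{i-1}}$: one has to propagate the $(t_i-t_{i-1})^\gamma$ gain through every conditional Malliavin--Sobolev norm of $R_i$, which means handling $R_i$ as an anticipating Skorohod integral with respect to $W$, controlling the Volterra kernel $K$ near its diagonal, and converting derivatives in $B$ into derivatives in $W$ via \eqref{eq:DWDB} at each order — all while keeping the $\sigma_n$-scaling transparent, which is exactly the point of the preliminary rescaling.
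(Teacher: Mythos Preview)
Your proof is correct and follows essentially the same route as the paper: you rescale $I_i+\rho R_i$ by $\sigma_n$, apply the localized conditional integration by parts \eqref{Hub}, and bound the resulting three factors exactly as the paper does via Lemmas \ref{lem:app1}, \ref{lem:A2} and \ref{lem:app2}, with the final passage to $n^{-\gamma/2H}$ coming from Lemma \ref{lem:prop-partition}(ii). The only slight looseness is in your treatment of the second factor $\|\tilde I\,\Phi_{M'}\Phi_{c_i',\epsilon_i'}\|_{k_2,p_2,t_{i-1}}$: since $\tilde I=\sigma_n^{-1}(I_i+\rho R_i)$ also contains $\sigma_n^{-1}R_i$, you need the $R_i$ bound there as well (as the paper notes in Lemma \ref{lem:app2}), but your own estimate $\|R_i^j\Phi_{M'}\|\le c_M(t_i-t_{i-1})^\gamma\sigma_n$ immediately gives $\|\sigma_n^{-1}R_i\Phi_{M'}\|\le c_M$, so this is harmless.
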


\begin{pf}
We start from expression (\ref{eq:def-J3}) and normalize $I_{i}+\rho
R_i$ in the following way: we just set $I_{i}+\rho R_i=\sigma_{n}
\mathcal{U}
_{i}$, where $\mathcal{U}_{i}:=\sigma_{n}^{-1} (I_{i}+\rho R_i)$. We
thus have
\[
J_{3,i}= \sum_{j=1}^m
\mathbf{E}_{t_{i-1}} \biggl[ \Phi_{M} \Phi_{c_i,\epsilon_i} \int
_{0}^{1} \partial_{x_j}
\delta_x(F_{i-1}+\sigma_{n}
\mathcal{U}_{i}) R_i^j \,d\rho \biggr].
\]
Along the same lines as in (\ref{eq:bnd-H1d-I-Theta}), the integration
by parts formula (\ref{Hub}) now yields
\[
J_{3,i} = \sigma_n^{-(m+1)}\sum
_{j=1}^m\int_0^1
\mathbf {E}_{t_{i-1}} \bigl[\mathbf{1}_{\{
I_i+\rho
R_i>x-F_{i-1}\}}H^{t_{i-1}}_{(j,1,\ldots,m)}
\bigl(\mathcal{U}_{i}, R_i^j
\Phi_{M}\Phi _{c_i,\epsilon_i}\bigr) \bigr]\,d\rho.
\]
Hence the following bound holds true (see \cite{Nu06}, page 102):
\[
|J_{3,i}| \le c_{1,q} \sigma_n^{-(m+1)}
A_1 \int_0^1 A_2(
\rho) A_3(\rho) \,d\rho,
\]
where the quantities $A_1$, $A_2(\rho)$, $A_3(\rho)$ are, respectively,
defined by
\[
A_1=\max_{j=1,\ldots,m}\bigl\|R_i^j
\Phi_{M'}\bigr\|_{k_1,p_1,t_{i-1}},\qquad A_2(\rho)= \bigl\|\det\bigl(
\Gamma_{\mathcal{U}_{i},t_{i-1}}^{-1}\bigr)\Phi_{M'}\Phi
_{c_i,\epsilon_i}\bigr\| _{p_3,t_{i-1}}^{k_3}
\]
and
\[
A_3(\rho)= \|\mathcal{U}_{i} \Phi_{M'}
\|_{k_2,p_2,t_{i-1}}^{k_4},
\]
and where we also recall that $R^j_{i}$ is defined by (\ref{eq:def-Ri}).
Then the first inequality in~(\ref{eq:bnd-J3i}) follows from
Lemmas~\ref{lem:app1}, \ref{lem:A2} and \ref{lem:app2} which have been postponed
to the \hyperref[sec:app1]{Appendix},
and by choosing $\gamma$ such that $H-\frac{1}2<\gamma$. In order to go
from the first inequality in (\ref{eq:bnd-J3i}) to the second one, we
simply apply Lemma~\ref{lem:prop-partition}.
\end{pf}

%s5.7 #&#
\subsection{Lower bound}
Let us first summarize the considerations of the previous
section: starting from decomposition (\ref{eq:dcp-delta-Fi}) and
applying Corollary~\ref{cor:low-bnd-J1}, Propositions
\ref{prop:bound-J2}, \ref{prop:bound-norm-phi} and \ref{prop:bnd-J3i}
and the forthcoming relation
(\ref{rmk:behavior-constants}), we
have obtained the following facts: the inequality $\mathbf
{E}_{t_{i-1}} [\delta_x(F_i)] \ge J_{1,i} + J_{2,i} + J_{3,i}$ holds true, and thus
%
%e79 #&#
\begin{eqnarray}
\mathbf{E}_{t_{i-1}} \bigl[\delta_x(F_i)
\bigr] &\ge&  \frac{1}{(2\pi)^{m/2}(\Lambda\sigma_n^2)^{m/2}} \exp \biggl(-\frac{|x-F_{i-1}|^{2}}{2\lambda  \sigma_n^{2}} \biggr)
\nonumber
\\[-8pt]
\label{prop:summary-bnd-J}
\\[-8pt]
\nonumber
&&{}-c_{\lambda,\Lambda} \bigl({\sigma_n^2}
\bigr)^{-k_2} L_{n,i}^{\gamma,p}(k_1,p_1)
- \frac{c_{M,V,m}}{n^{\gamma/2H}  (\sigma_n^2)^{m/2} },
\end{eqnarray}
with the additional information $\mathbf{E}[ L_{n,i}^{\gamma
,p}(k_1,p_1) ]
\leq C_{M,\eta}
n^{-\eta}$ for an arbitrarily large exponent $\eta$.

We are now ready to prove the main theorem of this article:

\begin{pf*}{Proof of Theorem~\ref{thm:low-bnd-intro}}
With equation (\ref{prop:summary-bnd-J}) in hand, we shall
follow the strategy designed in~\cite{Ba,Ko}: Fix $x-a$ throughout the
proof, and define the
balls $B_i= B(y_i,  c_1\sigma_n)$ for $i=1, \ldots,  n$
where $y_i=a+\frac{i}{n}(x-a)$. We also define below an
additional sequence $\{x_i;  i=1,\ldots,n\}$, such that $x_i\in
B_i$ and $x_n=x$. The constant $c_1$ will be fixed later on (see Figure~\ref{fig1}).
%f1
%f1 #&#
\begin{figure}

\includegraphics{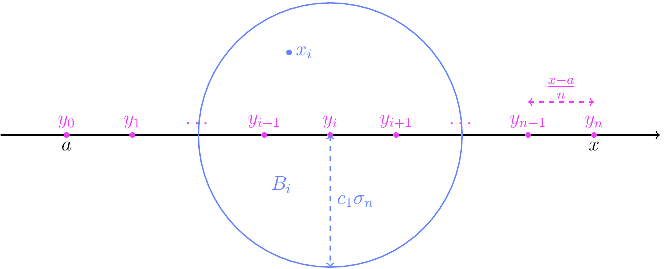}

\caption{Space partition for the lower bound, with sequence
$y_{1},\ldots,y_{n}$ and $x_{i}$.}\label{fig1}
\end{figure}

We shall now proceed in a backward recursive way on the index
$i$. For instance, in order to go from $n$ to $n-1$, we resort to (\ref{prop:summary-bnd-J}) in order to write
\[
\mathbf{E} \bigl[\delta_x(F_n) \bigr] = \mathbf{E}
\bigl[ \mathbf{E}_{t_{n-1}} \bigl[\delta_x(F_n)
\bigr]\bigr] \geq \frac{c_{V,m}}{\sigma_{n}^{m}} \mathbf{E} \biggl[\exp \biggl(-
\frac{|x-F_{n-1}|^2}{2\lambda\sigma
_n^2} \biggr) -c_{M,V,m} n^{-\kappa} \biggr],
\]
for a certain strictly positive constant $\kappa$. Hence
\begin{eqnarray*}
&&\mathbf{E} \bigl[\delta_x(F_n) \bigr]\\
 &&\qquad\ge
\frac{c_{V,m}}{\sigma_{n}^{m}} \int_{\mathbb{R}} \mathbf{E} \biggl[ \biggl(\exp
\biggl(-\frac{|x-F_{n-1}|^2}{2\lambda
\sigma_n^2} \biggr) -c_{M,V,m} n^{-\kappa} \biggr)
\delta_{x_{n-1}}(F_{n-1}) \biggr]\,dx_{n-1}
\\
&&\qquad\ge \frac{c_{V,m}}{\sigma_{n}^{m}} \int_{B_{n-1}} \mathbf{E} \biggl[
\biggl(\exp \biggl(-\frac{|x-F_{n-1}|^2}{2\lambda
\sigma_n^2} \biggr) -c_{M,V,m}
n^{-\kappa} \biggr)\delta_{x_{n-1}}(F_{n-1}) \biggr]
\,dx_{n-1}.
\end{eqnarray*}
We now observe the following: if we wish the term
$\delta_{x_{n-1}}(F_{n-1})$ to give a nonnull contribution,
the relations
\begin{eqnarray*}
x_{n-1}&\in&  B(y_{n-1}, c_{1}\sigma_{n}),\qquad
x-y_{n-1}=\frac{x-a}{n},
\\
\sigma_{n} &=& \frac{t^{H}}{n^{1/2}},\qquad
|F_{n-1}-x_{n-1}| \le c_{1} \sigma_n
\end{eqnarray*}
must be satisfied. Moreover,
from these conditions, it is easily seen that $|x-F_{n-1}|\le
4c_{1}\sigma_{n}$ whenever $n\ge\frac{|x-a|^2}{c_1t^{2H}}$.
We thus define a constant $c_2\ge\frac{1}{4c_1}$ such that
%
%e80 #&#
\begin{equation}
\label{eq:expression-n-optimd}
n=\frac{c_2   \llvert x-a\rrvert ^2}{t^{2H}}.
\end{equation}
Then if we take $c_1$ such that $\exp (
-\frac{8c_1^2}{\lambda} )\geq\frac{1}2$ and $n$ such that
$c_{M,V,m}   n^{-\kappa}\le1/4$, we obtain
\[
\mathbf{E} \bigl[\delta_x(F_n) \bigr] \geq
\frac{c_{V,m}}{4\sigma_{n}^{m}} \int_{B_{n-1}} \mathbf{E} \bigl[
\delta_{x_{n-1}}(F_{n-1}) \bigr]\,dx_{n-1}.
\]

These arguments can now be iterated backward from $i=n-1$ to
$1$, and the reader can easily check that the only additional
required condition is the compatibility relation
$y_{i+1}-y_{i}\leq c_1\sigma_n$ (this will be verified below).
Denoting by $\alpha_m$ the volume of a unit ball in $\mathbb{R}^{m}$
[viz. $\alpha_{m}=\pi^{m/2}/\Gamma(\frac{m}{2}+1)$], we end up
with
\begin{eqnarray}
\mathbf{E} \bigl[\delta_x(F_n) \bigr]
&\geq &  \biggl(\frac{c_{V,m}}{4 \sigma_{n}^{m}} \biggr)^n \bigl|B(0,c_1
\sigma_n)\bigr|^{n-1} \nonumber \\
&=&  \biggl(\frac{c_{V,m}}{4}
\biggr)^n \biggl(\frac{n^{1/2}}{t^{H}} \biggr)^{nm} \biggl(
\frac{c_1 t^H}{n^{1/2}} \biggr)^{m(n-1)} \alpha_{m}^{n-1}
\nonumber
\\[-8pt]
\label{eq:low-bnd-expp-form}
\\[-8pt]
\nonumber
&=& \biggl(\frac{c_{V,m}}{4} \biggr)^n \bigl(c_1^m
\alpha_{m}\bigr)^{n-1} \biggl(\frac{n^{1/2}}{t^{H}}
\biggr)^{m}\nonumber \\
&=& \frac{1}{\alpha_{m} (c_1t^{H} )^m} \exp \biggl(n\ln \biggl(
\frac{c_{V,m} c_1^m\alpha_{m}}{4} \biggr)+\frac{m}{2} \ln(n) \biggr).
\nonumber
\end{eqnarray}

Once here, we are reduced to tune our parameters according to
the following constraints:
\begin{longlist}[(ii)]
\item[(i)]
Recalling (\ref{eq:expression-n-optimd}), we have that if $c_1$ is
taken small enough so that
$\rho\equiv-\ln(c_{V,m} c_1^m\alpha_{m}/4)>0$ and (as alluded to
above) such that $\exp( -8c_1^2/\lambda)\geq\frac{1}2$ and $n\ln
(\rho
)+m\ln(n)\ge0$ for all $n\in\mathbb{N}$, we get
\[
\exp \biggl(n\ln \biggl(\frac{c_{V,m} c_1^m\alpha_{m}}{4} \biggr) \biggr) = \exp \biggl(-
\frac{\rho c_2\llVert x-a\rrVert ^2}{t^{2H}} \biggr).
\]
We remark here that the values of $c_1$, $c_2$ and $c_{M,V,m}$ are
fixed independently of $n$. It is
now easily seen that our bound (\ref{eq:low-bnd-expp-form}) is
of the form (\ref{eq:low-bnd-intro}).

\item[(ii)]
We now choose the constant $c_2$ in
(\ref{eq:expression-n-optimd}) so that the compatibility relation
$y_{i+1}-y_{i}\leq c_1\sigma_n$ is satisfied. Toward this aim,
recall that
\[
|y_{i+1}-y_{i}|=\frac{|x-a|}{n}=\frac{|x-a|}{n^{1/2}}
\frac{1}{n^{1/2}},
\]
and since $n= c_2\frac{|x-a|^2}{t^{2H}}$, we get
\[
|y_{i+1}-y_{i}|=\frac{|x-a|}{n^{1/2}}c_2^{-1/2}
\frac{t^H}{\llvert x-a\rrvert }=c_2^{-1/2} \sigma_n.
\]
\end{longlist}
It is thus sufficient to take $c_2^{-1/2}\leq c_1\wedge(2c_1^{1/2})$,
which also satisfies that $n\ge\frac{|x-a|^2}{4c_1t^{2H}}$. This
completes our proof.
\end{pf*}

%sA #&#
\begin{appendix}
\section*{Appendix: Some properties of stochastic derivatives}
\label{sec:app1}

We start this technical section with a general bound on the space
$\mathcal{H}$ related to fBm.

\setcounter{theorem}{0}
%leA.1 #&#
\begin{lemma}\label{lem:bnd-H-C-gamma}
Let $H\in(0,1/2)$, $t\in(0,1]$ and consider the space $\mathcal{H}$
defined on
$[0,t]$ as in Section~\ref{sec:malliavin-tools}. Let $f$ be an element
of $\mathcal{C}^{\gamma}([0,t])$ for $1/2-H<\gamma<1/2$, with $\|f\|
_{\infty}\le a$
and $\|f\|_{0,t,\gamma}\le b$. Then
\[
\|f\|_{\mathcal{H}} \le c_{H} \bigl(a t^{H} + b
t^{\gamma+H} \bigr).
\]
\end{lemma}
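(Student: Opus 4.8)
The plan is to use the isometry $\|f\|_{\ch} = \|K^{*}f\|_{L^{2}([0,t])}$, where $K^{*}$ denotes here the operator of Section \ref{sec:malliavin-tools} built on the interval $[0,t]$, together with the rough-case representation recalled there: for $H\in(0,1/2)$,
\[
[K^{*}f]_{s} = K(t,s)\, f_{s} + \int_{s}^{t}\lp f_{r}-f_{s}\rp \partial_{r}K(r,s)\, dr =: T_{1}(s)+T_{2}(s).
\]
By the triangle inequality in $L^{2}([0,t])$ it suffices to estimate $\|T_{1}\|_{L^{2}([0,t])}$ and $\|T_{2}\|_{L^{2}([0,t])}$ separately.

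First I would handle $T_{1}$: since $\|f\|_{\infty}\le a$ we have $|T_{1}(s)|\le a\,|K(t,s)|$, and using the identity $\int_{0}^{t}K(t,s)^{2}\, ds = R(t,t)=t^{2H}$ (which follows from the relation $R(s,t)=\int_{0}^{s\wedge t}K(t,r)K(s,r)\, dr$ stated after \eqref{eq:def-kernel-K}), this gives $\|T_{1}\|_{L^{2}([0,t])}\le a\, t^{H}$. For $T_{2}$ I would combine the H\"older bound $|f_{r}-f_{s}|\le \|f\|_{0,t,\ga}|r-s|^{\ga}\le b\,|r-s|^{\ga}$ with the classical pointwise kernel estimate $|\partial_{r}K(r,s)|\le c_{H}(r-s)^{H-3/2}$ for $0<s<r$ (a consequence of $\partial_{r}K(r,s)=c_{H}(r/s)^{H-1/2}(r-s)^{H-3/2}$ and of $(r/s)^{H-1/2}\le 1$ when $H<1/2$). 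Because $\ga>1/2-H$, the exponent $\ga+H-3/2$ is $>-1$, so the inner integral converges and
\[
|T_{2}(s)|\le c_{H}\, b\int_{s}^{t}(r-s)^{\ga+H-3/2}\, dr = c_{H,\ga}\, b\,(t-s)^{\ga+H-1/2}.
\]
Squaring and integrating in $s$ over $[0,t]$ (the exponent $2\ga+2H-1>-1$ as well) yields $\|T_{2}\|_{L^{2}([0,t])}\le c_{H,\ga}\, b\, t^{\ga+H}$. Adding the two bounds gives $\|f\|_{\ch}\le c_{H}\lp a\, t^{H}+b\, t^{\ga+H}\rp$, which is the claim; note that only the lower restriction $\ga>1/2-H$ on the H\"older exponent is used here, the upper one $\ga<1/2$ being imposed merely for consistency with the rest of the paper.

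The hard part is essentially just to have the right pointwise bound on $\partial_{r}K(r,s)$ and the $L^{2}$-normalization $\int_{0}^{t}K(t,s)^{2}ds=t^{2H}$; both are standard facts about the fBm Volterra kernel (see e.g.\ Chapter 5 of \cite{Nu06}), after which the estimate reduces to the two elementary one-variable integrals above. A minor point of bookkeeping is to make sure that the space $\ch$ appearing in the statement is the one associated with fBm restricted to $[0,t]$, so that the $K^{*}$-representation must be taken with endpoint $t$ rather than $1$ --- this is precisely the form displayed in Section \ref{sec:malliavin-tools}.
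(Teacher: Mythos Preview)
Your proof is correct and actually somewhat cleaner than the paper's. Both arguments ultimately bound $\|K^{*}f\|_{L^{2}([0,t])}$, but the routes differ. The paper passes through the fractional-derivative representation \cite[formula~(5.31)]{Nu06}, which forces the introduction of the weighted function $g_{u}=u^{-(1/2-H)}f_{u}$ and of the weight $s^{1-2H}$ in the $L^{2}$-integral; the resulting decomposition into $A_{s}$ and $B_{s}$ then requires an extra step to control the factor $\psi_{v}=(s/v)^{1/2-H}$, and it is there that the upper restriction $\ga\le 1/2$ is invoked. By contrast, you work directly with the $K^{*}$-formula for $H<1/2$ displayed in Section~\ref{sec:malliavin-tools} together with the standard pointwise bound $|\partial_{r}K(r,s)|\le c_{H}(r-s)^{H-3/2}$ (valid since $(r/s)^{H-1/2}\le 1$ for $H<1/2$), which reduces everything to two elementary one-variable integrals and, as you observe, only uses the lower restriction $\ga>1/2-H$. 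The paper's approach has the minor advantage of citing a ready-made inequality from \cite{Nu06}; yours is more self-contained and transparent.
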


\begin{pf}
For a function $g$ defined on $[0,t]$, recall that its fractional
derivative is given by
%
%eA.1 #&#
\begin{equation}
\label{eq:def-frac-deriv}
D_{t^{-}}^{1/2-H}g_{u} =
\frac{g_{u}}{(t-u)^{1/2-H}} + \int_{u}^{t}
\frac{g_{u}-g_{v}}{(v-u)^{3/2-H}} \,dv.
\end{equation}
Consider now $f\in\mathcal{C}^{\gamma}([0,t])$ satisfying the
conditions above,
and set $g_{u}=u^{-(1/2-H)}f_{u}$. According to \cite{Nu06}, formula (5.31), we have
%
%eA.2 #&#
\begin{equation}
\label{eq:bnd-H-frac-deriv} \|f\|_{\mathcal{H}}^{2}\le c_{H} \int
_{0}^{t} s^{1-2H} \bigl\llvert
D_{t^{-}}^{1/2-H}g_{s}\bigr\rrvert ^{2}
\,ds.
\end{equation}
We now proceed to estimate the right-hand side of relation (\ref{eq:bnd-H-frac-deriv}).

Indeed, plugging definition (\ref{eq:def-frac-deriv}) into (\ref
{eq:bnd-H-frac-deriv}), it is readily checked that
%
%eA.3 #&#
\begin{eqnarray}
\|f\|_{\mathcal{H}}^{2}&\le&  c_{H} \biggl(\int
_{0}^{t} A_{s}^{2} \,ds +
\int_{0}^{t} B_{s}^{2}
\,ds \biggr)\nonumber\\
\eqntext{\ds\mbox{with } A_{s} = \frac{f_{s}}{(t-s)^{1/2-H}},
B_{s} = \int_{s}^{t}
\frac{f_{s}- \psi_{v}   f_{v}}{(v-s)^{3/2-H}} \,dv,}
\end{eqnarray}
where we have set $\psi_{v}=(s/v)^{1/2-H}$. It is then easily seen that
$\int_{0}^{t} A_{s}^{2}   \,ds\le c_{H}   a^{2}   t^{2H}$. In order to
bound $B$, notice that the function $\psi$ is well defined on $[s,t]$
and satisfies $\psi_{s}=1$, $\psi_{v}\le1$ and $|\psi'_{v}|\le v^{-1}$.
\begin{eqnarray*}
\llvert f_{s}- \psi_{v} f_{v} \rrvert
&\le & |f_{s}- f_{v}| \psi_{v} +
|f_{s}| |1-\psi_{v}| \\
&\le& b (v-s)^{\gamma} + a |1-
\psi_{v}|^{\gamma} \le \biggl(b + \frac{a}{s^{\gamma}} \biggr)
(v-s)^{\gamma}.
\end{eqnarray*}
Dividing this inequality by $(v-s)^{3/2-H}$, recalling that $\gamma\le
1/2$ and integrating over $[s,t]$, we get
\[
|B_{s}| \le c_{H} \biggl(b + \frac{a}{s^{\gamma}} \biggr)
(t-s)^{\gamma-(1/2-H)},
\]
which entails that
\[
\int_{0}^{t} B_{s}^{2}
\,ds \le c_{H} \bigl(a^{2} t^{2H} +
b^{2} t^{2(\gamma+H)} \bigr).
\]
Gathering our bounds on $\int_{0}^{t} A_{s}^{2}   \,ds$ and $\int_{0}^{t} B_{s}^{2}   \,ds$, our proof is now complete.
\end{pf}

Let us now state a bound on Malliavin derivatives.

\begin{pf*}{Proof of relation (\ref{eq:DWDB})}
We focus on the first derivative case, the other ones being handled in
a similar fashion. We will thus prove that
\[
|\mathrm{D}_{u}F|\le\mathop{\ess\sup}_{u\le r}|
\mathbf{D}_{r}F|K(t,u).
\]
Indeed, according to Proposition~\ref{prop:relation-D-DW}, we have that
for $F\in\mathcal{F}_t$,
\[
|\mathrm{D}_{u}F|=\bigl|\bigl[K_t^*\mathbf{D}F
\bigr]_u\bigr|=\biggl|\int_u^t \mathbf
{D}_rF\,\partial_rK(r,u)\,dr\biggr| \le\mathop{\ess\sup}_{u\le r\le t}|\mathbf{D}_rF|K(t,u),
\]
which is exactly our claim.
\end{pf*}

We now turn to the bounds on the process $Q$ featuring in the
definition of our remainders $R_{i}$ [see decomposition (\ref
{eq:recurs-Fi-Fi-1}) of $X_{t}$]:

%leA.2 #&#
\begin{lemma} \label{lema:cotaQ}
Let $X$ be the solution to (\ref{eq:sde-Vd}), let $\eta_i$
be the function defined by~(\ref{eq:notation-eta-i-gk-i}) and $Q$ the
process given by (\ref{eq:def-Qk}). If
$r_1,s\in(t_{i-1},t_{i})$,
then the following bounds hold true:
%
%eA.4 #&#
%eA.5 #&#
\begin{eqnarray}\label{eq:Q-zero-order}
\bigl\llvert Q_s^k \bigr\rrvert &\leq&
c_V K(t,s) |t_i-t_{i-1}|^{\gamma}
Z^i_0,
\\
\label{eq:firstder}
\bigl\llvert \mathrm{D}_{r_1}^{l} Q_s^k
\bigr\rrvert &\leq& c_V K(t,s) K(t,r_1)
Z^i_1,
\end{eqnarray}
for $\mathcal{F}_1$-measurable random variables
$Z^i_0, Z^i_1$ defined by $Z^i_0=\|B\|_{t_{i-1},t,\gamma}\vee\|B\|
_{t_{i-1},t,\gamma}^\gamma$ and
%
%eA.6 #&#
\begin{equation}\label{eq:def-Z1}
Z^i_1=\sup \bigl\{\bigl|\mathbf{D}_{r_1}^{l}
(X_v-X_{t_{i-1}})\bigr|, t_{i-1}\leq r_1\leq
v\leq t_i \bigr\},
\end{equation}
admitting moments of all orders.
In general, we can extend these results to Malliavin derivatives
of arbitrary order $\ell\geq1$ in the following way: for
$r_1,s\in(t_{i-1},t_{i})$ and $r_2, \ldots, r_\ell<t_i$, we
have
%
%eA.7 #&#
\begin{equation}
\label{eq:gender}
\bigl|\mathrm{D}_{r_1,\ldots, r_\ell}^{j_1,\ldots, j_\ell} Q_s^k\bigr|
\leq c_V K(t,s) Z^i_\ell\prod
_{j=1}^n K(t, r_j),
\end{equation}
for $Z^i_\ell\equiv\sup \{|\mathbf{D}_{r_1,\ldots, r_\ell
}^{j_1,\ldots,
j_\ell} (X_v-X_{t_{i-1}})|, t_{i-1}\leq r_i\leq v\leq t_i,
i=1, \ldots, n \}$, which is a $\mathcal{F}_1$-measurable random
variable with moments of all orders.
\end{lemma}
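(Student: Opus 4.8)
The plan is to prove \eqref{eq:Q-zero-order}, \eqref{eq:firstder} and \eqref{eq:gender} in increasing order of generality, all three resting on a single elementary kernel identity: since $H>1/2$ we have $\partial_u K(u,s)=c_H\,s^{1/2-H}(u-s)^{H-3/2}u^{H-1/2}\ge 0$ for $0<s<u$, so that, using $K(s,s)=0$,
\begin{equation*}
\int_s^t |\partial_u K(u,s)|\,du=\int_s^t \partial_u K(u,s)\,du=K(t,s).
\end{equation*}
Two further observations will be used repeatedly. First, for $u\in[s,t]$ the truncation $\eta_i(u)=\min(u,t_i)$ of \eqref{eq:notation-eta-i-gk-i} lies in $[s,t_i]\subseteq(t_{i-1},t_i]$. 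Second, since $r_1>t_{i-1}$ while $X_{t_{i-1}}\in\mathcal{F}_{t_{i-1}}$, any iterated Malliavin derivative (with respect to either $B$ or $W$) of $X_{t_{i-1}}$ one of whose variables equals $r_1$ vanishes by adaptedness.

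For the zeroth order bound I would start from \eqref{eq:def-Qk}, use that $V_k\in\mathcal{C}_b^\infty$ is Lipschitz to get $|V_k(X_{\eta_i(u)})-V_k(X_{t_{i-1}})|\le c_V\,\|X\|_{t_{i-1},t_i,\gamma}\,|t_i-t_{i-1}|^{\gamma}$ for all $u\in[s,t]$, control $\|X\|_{t_{i-1},t_i,\gamma}$ by $Z_0^i$ via Proposition \ref{prop:cotes-hol-sol}, pull these factors out of the $du$-integral, and close with the kernel identity; this is \eqref{eq:Q-zero-order} at once. For the derivative bounds the scheme is to differentiate \eqref{eq:def-Qk} under the $du$-integral — legitimate because the integrand lies jointly in $\pmb{D}^{\infty}$ with derivative bounds integrable against $\partial_uK(u,s)\,du$, by Propositions \ref{prop:moments-sdes} and \ref{prop:cotes-hol-sol} — which reduces everything to a uniform-in-$u$ estimate of $\mathrm{D}_{r_1\ldots r_\ell}^{j_1\ldots j_\ell}[V_k(X_{\eta_i(u)})-V_k(X_{t_{i-1}})]$. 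Since $r_1>t_{i-1}$ annihilates every $\mathrm{D}$-derivative of $X_{t_{i-1}}$ containing $r_1$, a Faà di Bruno expansion kills the entire $X_{t_{i-1}}$-summand, so this equals $\mathrm{D}_{r_1\ldots r_\ell}^{j_1\ldots j_\ell}V_k(X_{\eta_i(u)})$, a finite sum over set partitions $P$ of $\{1,\dots,\ell\}$ of terms $\partial^{|P|}V_k(X_{\eta_i(u)})\prod_{B\in P}\mathrm{D}_{\{r_j:j\in B\}}X_{\eta_i(u)}$ with uniformly bounded prefactors.

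In the unique block $B_1\ni 1$ I would rewrite $\mathrm{D}_{\{r_j:j\in B_1\}}X_{\eta_i(u)}=\mathrm{D}_{\{r_j:j\in B_1\}}(X_{\eta_i(u)}-X_{t_{i-1}})$ and apply the transfer estimate \eqref{eq:DWDB}, which produces $\prod_{j\in B_1}K(t,r_j)$ times an essential supremum of $|\mathbf{D}(X_v-X_{t_{i-1}})|$ dominated by $Z_\ell^i$; for the other blocks I would apply \eqref{eq:DWDB} together with the a priori bound \eqref{eq:bnd-moments-D-n-X-t} to extract the factors $\prod_{j\in B}K(t,r_j)$ against $\mathcal{F}_1$-measurable random variables. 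Collecting all $K(t,r_j)$ into $\prod_{j=1}^{\ell}K(t,r_j)$, absorbing the finitely many (in number depending only on $\ell$) $\mathcal{F}_1$-measurable factors into a single random variable written $Z_\ell^i$, and multiplying by $\int_s^t|\partial_uK(u,s)|\,du=K(t,s)$ gives \eqref{eq:gender}; the case $\ell=1$ is the special instance where $P=\{\{1\}\}$ is the only partition, the chain rule is the plain gradient, and the ess sup runs over exactly $[r_1,\eta_i(u)]\subseteq[t_{i-1},t_i]$, yielding \eqref{eq:firstder} directly. Finally, Proposition \ref{prop:cotes-hol-sol} gives $Z_\ell^i\le c_{V,\ell}\exp(c_{V,\ell}\|B\|_{0,t_i,\gamma}^{1/\gamma})$, and since $H>1/2$ forces $1/\gamma<2$ for the admissible exponents $\gamma$, Fernique's theorem yields finiteness of all moments.

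The \emph{main obstacle} is the bookkeeping in the higher-order case: one must run the Faà di Bruno expansion while extracting exactly one factor $K(t,r_j)$ per derivative variable out of \eqref{eq:DWDB}, ensure that the block carrying $r_1$ inherits the increment structure $X_v-X_{t_{i-1}}$ so the bound can be phrased through $Z_\ell^i$ rather than through a cruder $\sup|\mathbf{D}X|$, and keep careful track of the essential-supremum ranges of $r_2,\dots,r_\ell$, which a priori only satisfy $r_j<t_i$. Justifying the interchange of $\mathrm{D}$ with the $du$-integral is routine but should be recorded.
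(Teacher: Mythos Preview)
Your proposal is correct and follows essentially the same route as the paper: both arguments hinge on the nonnegativity of $\partial_u K(u,s)$ giving $\int_s^t\partial_uK(u,s)\,du=K(t,s)$, the adaptedness observation $\mathrm{D}_{r_1}V_k(X_{t_{i-1}})=0$ for $r_1>t_{i-1}$, and the transfer $\mathrm{D}=K^*\mathbf{D}$ from Proposition~\ref{prop:relation-D-DW} (your \eqref{eq:DWDB}). The only cosmetic difference is that for $\ell\ge 2$ the paper invokes an induction while you spell out Fa\`a di Bruno; note, though, that your expansion produces a product of several $\mathcal{F}_1$-measurable sup factors rather than the single $Z_\ell^i$ exactly as defined in the statement, so either rename your aggregate factor or observe that all such products are dominated by $c_{V,\ell}\exp(c_{V,\ell}\|B\|_{t_{i-1},t_i,\gamma}^{1/\gamma})$ anyway, which is all that is used downstream (Remark~\ref{rmk:bnd-Z1-Zell}).
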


\begin{pf}
Bound (\ref{eq:Q-zero-order}) is an easy consequence of
(\ref{eq:def-Qk}), Proposition~\ref{prop:cotes-hol-sol} and the fact
that $\partial
_uK(u,s)\ge0$. Moreover, observe that whenever $r_1>t_{i-1}$, we
have $\mathrm{D}_{r_1}V_k(X_{t_{i-1}})=0$. Hence, using
Proposition~\ref{prop:relation-D-DW}, we get
\begin{eqnarray*}
\bigl|\mathrm{D}_{r_1}^l Q_s^k\bigr|& =&
\biggl\llvert \int_{s\vee r_1}^t \partial_u
K(u,s) \mathrm{D}_{r_1}^l V_k(X_{\eta_i(u)})
\,du\biggr\rrvert %&\leq& \lln\int_{s\vee r_1}^t \partial_u K(u,s) C_V
%\bd_{r_1}^l X_{\eta_i(u)}du\rrn^p\\
\\
&=& \biggl\llvert \int_{s\vee r_1}^t
\partial_u K(u,s) \bigl[K^* _{t}\mathbf
{D}_{\cdot}^{l} V_k(X_{\eta_i(u)})
\bigr]_{r_1} \,du \biggr\rrvert
\\
&=&\biggl\llvert \int_{s\vee r_1}^t
\partial_u K(u,s) \biggl(\int_{r_1}^{t}
\mathbf{D}_{r_2}^{l} V_k(X_{\eta_i(u)})
\,\partial_{r_2} K(r_2,r_1) \,dr_2
\biggr)\,du \biggr\rrvert.
\end{eqnarray*}
It is thus readily checked that
\[
\bigl|\mathrm{D}_{r_1}^l Q_s^k\bigr| \leq
c_V Z^i_{1} \biggl\llvert \int
_{s\vee r_1}^t \partial_u K(u,s)
K(t,r_1) \,du \biggr\rrvert \leq c_V
Z^i_{1} K(t,s) K(t,r_1).
\]
%
%Along the same lines, we can also write:
%\begin{align*}
%&|\mathrm{D}_{r_2r_1}^{kl} Q_s^k|
%= \lln\int_{s\vee r_1\vee r_2}^t \partial_u K(u,s)
%\mathrm{D}_{r_2r_1}^{kl} V_k(X_{\eta_i(u)})du\rrn\\
%&= \lln\int_{s\vee r_1\vee r_2}^t \partial_u K(u,s) \lc
%(K^*\otimes K^*)\bd_{\cdot\cdot}^{kl}
%V_k(X_{\eta_i(u)})\rc(r_1,r_2)du\rrn\\
%&= \lln\int_{s\vee r_1\vee r_2}^t \partial_u
%K(u,s)\int_{r_1}^{\eta_i(u)}\int_{r_2}^{\eta_i(u)}\bd_{r_3r_4}^{kl}
%V_k(X_{\eta_i(u)}) \partial_{r_3} K(r_3,r_1)\partial_{r_4}
%K(r_4,r_2)dr_4dr_3du\rrn\\
%&\leq c_V   Z^i_{2}   K(t,s)K(t_i,r_1)K(t_i,r_2),
%\end{align*}
%which is our second claim.
The general result (\ref{eq:gender}) is now obtained by means of
an induction argument and resorting to the same techniques as in
the case of the first order derivative (namely $\ell=1$).
\end{pf}

%reA.3 #&#
\begin{remark}\label{rmk:bnd-Z1-Zell}
Note that due to the definition (\ref{eq:def-Z1}) of $Z^i_l$ and
Proposition~\ref{prop:cotes-hol-sol} which controls the derivatives of
$X$ using the H\"older norms of $B$, the random
variables $Z$ verify
\[
\bigl|Z_{j}^{i}\bigr|\leq C_V\exp
\bigl(C_V \|B\|_{t_{i-1},t_i,\gamma}^{{1}/{\gamma}} \bigr),
\]
for any $\gamma\in(\frac{1}2,H)$. Hence, applying Proposition~\ref{prop:rel-normB-N} we obtain
\[
\bigl|Z_{j}^{i}\bigr|\leq C_V\exp
\bigl(C_{V,\gamma} \bigl( \mathrm{N}_{\gamma,p}^i(B)
\bigr)^{{1}/{2\gamma p}} \bigr),
\]
for any $p$ such that $0<\gamma<H-\frac{1}{2p}$. This relation
yields in particular that $Z_{j}^{i}\in\bigcap_{q\ge
1}L^{q}(\Omega)$. Furthermore, once we localize by the random variables
$\Phi_{M}$ or $\Phi_{M'}$, we end up with
%
%eA.8 #&#
\begin{equation}
\label{rmk:behavior-constants}
\quad\max_{0\le l\le k} \bigl(Z^i_l
\Phi_{M'} \bigr)\le c_{M,V,m}\qquad \mbox{with } c_{M,V,m} =
c_{V,m}\exp\bigl(c_{V,m}\bigl(M'
\bigr)^{{1}/{2\gamma p}}\bigr).
\end{equation}
\end{remark}

In the next proposition, we give norm estimates for the remainder terms
$R_i$ needed in the upper bound for $J_{3,i}$.

%leA.4 #&#
\begin{lemma}
\label{lem:app1}
In the setting of Proposition~\ref{prop:Xt-strato-B} and Corollary~\ref{cor:eq-X-strato-wrt-W}, with definition
(\ref{eq:def-Ri}) and (\ref{eq:star}),
the following estimate is valid:
%
%eA.9 #&#
\begin{equation}
\label{eq:bnd-Rj}
\bigl\|R^j_i\Phi_{M'}
\bigr\|_{k_1,p_1,t_{i-1}} \le c_{V,M} (t_{i}-t_{i-1})^{\gamma
}\sigma_{n}.
\end{equation}
\end{lemma}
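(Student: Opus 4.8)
The plan is the following. By Corollary~\ref{cor:eq-X-strato-wrt-W} one writes $R_i^j=\sum_{k=1}^d\delta_{t_{i-1}}(Q^{k,j}\,\1_{[t_{i-1},t_i]})+T_i^j$, where $\delta_{t_{i-1}}$ is the Skorohod integral with respect to $W$ on $[t_{i-1},1]$ and $T_i^j$ is the trace term (a universal constant times $\sum_k\int_{t_{i-1}}^{t_i}\mathrm{D}_s^k Q_s^{k,j}\,ds$). One then commutes the localization $\Phi_{M'}$ inside the Skorohod integral through the identity $\Phi_{M'}\,\delta_{t_{i-1}}(u)=\delta_{t_{i-1}}(\Phi_{M'}u)+\langle u,\mathrm{D}\Phi_{M'}\rangle_{L^2([t_{i-1},1])}$, so that $R_i^j\,\Phi_{M'}$ breaks into three pieces, estimated separately: the trace term $T_i^j\Phi_{M'}$, the scalar product term $\sum_k\langle Q^{k,j}\1_{[t_{i-1},t_i]},\mathrm{D}\Phi_{M'}\rangle$, and the main Skorohod term $\sum_k\delta_{t_{i-1}}(\Phi_{M'}Q^{k,j}\1_{[t_{i-1},t_i]})$. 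In all three the gain $(t_i-t_{i-1})^\ga$ comes from the difference $V_k(X_{\eta_i(u)})-V_k(X_{t_{i-1}})$ entering~\eqref{eq:def-Qk}; by Lemma~\ref{lema:cotaQ} it survives every Malliavin differentiation, and after localization one has, for all $\ell\ge0$,
\[
\lln \mathrm{D}^\ell_{r_1\ldots r_\ell}(Q_s^k)\,\Phi_{M'}\rrn\le c_{M,V,m}\,(t_i-t_{i-1})^\ga\,K(t,s)\prod_{j=1}^\ell K(t,r_j),
\]
using~\eqref{eq:Q-zero-order} and~\eqref{rmk:behavior-constants} for $\ell=0$, and for $\ell\ge1$ the fact that $Z^i_\ell$ is a supremum of increments of $\mathrm{D}^\ell X$ over $[t_{i-1},t_i]$ together with the H\"older bound of Proposition~\ref{prop:cotes-hol-sol} and~\eqref{rmk:behavior-constants}, which give $Z^i_\ell\Phi_{M'}\le c_{M,V,m}(t_i-t_{i-1})^\ga$.

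The trace term is the easiest: by the displayed bound with $\ell=1$ and $r_1=s$, $|\mathrm{D}_s^k Q_s^{k,j}\Phi_{M'}|\le c_{M,V,m}(t_i-t_{i-1})^\ga K(t,s)^2$, hence $|T_i^j\Phi_{M'}|\le c_{M,V,m}(t_i-t_{i-1})^\ga\si_n^2\le c_{M,V,m}(t_i-t_{i-1})^\ga\si_n$ since $\si_n\le1$, and its conditional Sobolev norm obeys the same bound by differentiating under the integral sign and using~\eqref{eq:gender}. For the scalar product term one combines~\eqref{eq:Q-zero-order}, \eqref{rmk:behavior-constants} and the bound $|\mathrm{D}_s^k\Phi_{M'}|\le\|\phi\|_\infty\,\mu_s$ from the proof of Proposition~\ref{prop:bound-norm-phi}, obtaining $|\langle Q^{k,j}\1_{[t_{i-1},t_i]},\mathrm{D}\Phi_{M'}\rangle|\le c_{M,V,m}(t_i-t_{i-1})^\ga\int_{t_{i-1}}^{t_i}K(t,s)\,\mu_s\,ds$, whose conditional moments --- and, via the Leibniz rule, those of its derivatives --- are controlled by the moment estimates for $\mu$ from Proposition~\ref{prop:bound-norm-phi}.

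The main term $\delta_{t_{i-1}}(\Phi_{M'}Q^{k,j}\1_{[t_{i-1},t_i]})$ is treated with the conditional version of Meyer's inequality, $\|\delta_{t_{i-1}}(v)\|_{k_1,p_1,t_{i-1}}\le c\,\|v\|_{k_1+1,p_1,t_{i-1}}$ for $L^2([t_{i-1},1])$-valued $v$ --- legitimate because, conditionally on $\cf_{t_{i-1}}$, the restriction of $W$ to $[t_{i-1},1]$ is again a Wiener process, so the classical Malliavin calculus applies on that interval with $\cf_{t_{i-1}}$-measurable constants, exactly as in Proposition~\ref{prop:int-parts-cond}. It then remains to bound the conditional $L^{p_1}$ norms of $\mathrm{D}^\ell(\Phi_{M'}Q^{k,j}\1_{[t_{i-1},t_i]})$ for $\ell\le k_1+1$. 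Expanding by the Leibniz rule into terms $\mathrm{D}^{\ell_1}(Q^{k,j})\,\mathrm{D}^{\ell_2}\Phi_{M'}$ with $\ell_1+\ell_2=\ell$: the term $\ell_2=0$ is deterministic after localization by the displayed bound, with $L^2([t_{i-1},t_i]^{\ell_1+1})$-norm equal to $c_{M,V,m}(t_i-t_{i-1})^\ga\si_n^{\ell_1+1}\le c_{M,V,m}(t_i-t_{i-1})^\ga\si_n$ (recall $\int_{t_{i-1}}^{t_i}K(t,s)^2ds=\si_n^2$ and $\si_n\le1$); for $\ell_2\ge1$, using that $\mathrm{D}^{\ell_2}\Phi_{M'}$ is supported on $\{M'-1<\bn_{\ga,p}^i(B)<M'+1\}$, one inserts the indicator of this set next to the $Q$-factor, which thereby becomes deterministic of order $(t_i-t_{i-1})^\ga$ times a product of kernels (by~\eqref{rmk:behavior-constants}), and a conditional H\"older inequality together with the moment bounds for $\mathrm{D}^{\ell_2}\Phi_{M'}$ of Proposition~\ref{prop:bound-norm-phi} produces a contribution carrying at least the factor $(t_i-t_{i-1})^\ga\si_n$, with extra powers of $t_i-t_{i-1}$ coming from $\mathrm{D}^{\ell_2}\Phi_{M'}$. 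Summing the finitely many contributions yields~\eqref{eq:bnd-Rj}.

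The delicate point, and the place where the fractional Brownian dependence structure enters, is that --- contrary to the Brownian case --- the increments of $B$ on $[t_{i-1},t_i]$ are not independent of $\cf_{t_{i-1}}$, so the random factors $Z^i_\ell$ and $\mu_s$ cannot simply be averaged out after conditioning; they must be placed next to $\Phi_{M'}$ (or on its support) \emph{before} conditioning, which is precisely why $\Phi_{M'}$ has to be commuted through the Skorohod integral. Obtaining the corresponding conditional (hence almost sure) versions of the estimates of Proposition~\ref{prop:bound-norm-phi} --- by using that, conditionally on $\cf_{t_{i-1}}$, the process $B$ on $[t_{i-1},t_i]$ splits into a $\cf_{t_{i-1}}$-measurable drift plus an independent Gaussian part with the same local regularity --- and checking that every resulting term retains the gain $(t_i-t_{i-1})^\ga$, are what require the most care; the conditional Meyer inequality itself is routine once the conditional framework on $[t_{i-1},1]$ is in place.
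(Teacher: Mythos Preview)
Your route differs from the paper's: the paper differentiates the Stratonovich representation directly via the rule $\mathrm{D}_r^j R_i = Q_r^j + \sum_k \int_{t_{i-1}}^{t_i} \mathrm{D}_r^j Q_s^k \circ dW_s^k$, singles out $Q_r^j$ as the one term carrying the gain $(t_i-t_{i-1})^\gamma$, and treats the remaining integrals as lower-order remainders. You instead split the Stratonovich integral into Skorohod plus trace, commute $\Phi_{M'}$ inside, and appeal to a conditional Meyer inequality. This is a clean alternative: Meyer absorbs the iterated differentiations in one stroke, whereas the paper's argument is a recursion on Stratonovich integrals.

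There is, however, a genuine gap. Your claim that $Z_\ell^i\,\Phi_{M'}\le c_{M,V,m}(t_i-t_{i-1})^\gamma$ for $\ell\ge1$ is false. Take $\ell=1$: since $\mathbf{D}_r^l X_{t_{i-1}}=0$ for $r>t_{i-1}$, one has $Z_1^i=\sup_{t_{i-1}\le r\le v\le t_i}|\mathbf{D}_r^l X_v|$; but $\mathbf{D}_r^l X_v\to V_l(X_r)$ as $v\downarrow r$, which under the ellipticity hypothesis is of order one, not small. The H\"older bound of Proposition~\ref{prop:cotes-hol-sol} controls the variation of $v\mapsto\mathbf{D}_r^l X_v$ on $[r,t_i]$ \emph{starting from $V_l(X_r)$}, not from zero, so no factor $(v-t_{i-1})^\gamma$ is produced. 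Consequently your displayed pointwise estimate on $\mathrm{D}^\ell(Q_s^k)\,\Phi_{M'}$ is off by $(t_i-t_{i-1})^\gamma$ for every $\ell\ge1$; the correct bound is~\eqref{eq:gender} together with~\eqref{rmk:behavior-constants}, namely $c_{M,V,m}\,K(t,s)\prod_j K(t,r_j)$ with no extra smallness. This also undermines your trace-term estimate, which relied on the $\ell=1$ case of the false bound.

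The repair is exactly what the paper does: only the zeroth-order piece $Q_s^{k,j}$ genuinely carries $(t_i-t_{i-1})^\gamma$ (via~\eqref{eq:Q-zero-order}); the $\ell$-th order contributions in your Meyer expansion then give $c_{M,V,m}\,\sigma_n^{\ell+1}$ rather than $c_{M,V,m}(t_i-t_{i-1})^\gamma\sigma_n$. The paper's own proof calls such contributions ``remainder terms with respect to~\eqref{eq:bnd-Rj}'' and this is harmless for the downstream use in Proposition~\ref{prop:bnd-J3i}, where only the second inequality in~\eqref{eq:bnd-J3i} matters and $\sigma_n=t^H n^{-1/2}\le c\,n^{-\gamma/(2H)}$ for $\gamma<H$. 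So your argument goes through once you drop the spurious factor for $\ell\ge1$ and track the $\sigma_n^{\ell+1}$ terms honestly.
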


\begin{pf}
This result obviously involves the control of many derivative terms.
For the sake of conciseness, we only sketch the bound for $\mathrm
{D}R_{i}$. Now recall that
\[
R_{i}= \sum_{k=1}^d\int
_{t_{i-1}}^{t_{i}} Q^k_s \circ
dW_{s}^k.
\]
We now apply a small variant of \cite{Nu06}, Proposition~1.3.8, to
Stratonovich integrals, which states that for $r\in[t_{i-1},t_{i}]$,
we have
%
%eA.10 #&#
\begin{equation}
\label{eq:deriv-R} \mathrm{D}^{j}_{r}R_{i}=
Q_{r}^{j} + \sum_{k=1}^d
\int_{t_{i-1}}^{t_{i}} \mathrm{D}^{j}_{r}Q^k_s
\circ dW_{s}^k.
\end{equation}
Let us now evaluate the $L^{2}[t_{i-1},t_i]$ norm of $\mathrm
{D}^{j}_{r}R_{i}$. The main contribution for this norm comes from the
term $Q$ on the right-hand side of (\ref{eq:deriv-R}), for which we
obtain, according to (\ref{eq:Q-zero-order}),
\begin{eqnarray*}
\int_{t_{i-1}}^{t_{i}} \bigl(Q_{r}^{j}
\bigr)^{2} \,dr&\le& c_{V} |t_{i}-t_{i-1}|^{2\gamma}
\bigl(Z_{0}^{i} \bigr)^{2} \int
_{t_{i-1}}^{t_{i}} K^{2}(t,r) \,dr
\\
&=& c_{V} \bigl(Z_{0}^{i}
\bigr)^{2} |t_{i}-t_{i-1}|^{2\gamma
}
\sigma_{n}^{2},
\end{eqnarray*}
and thus
\begin{eqnarray*}
\mathbf{E}_{t_{i-1}}^{1/p_{1}} \bigl[\|Q\| _{L^{2}([t_{i-1},t_{i}])}^{p_{1}}
\Phi _{M'} \bigr] &\le&  c_{V} |t_{i}-t_{i-1}|^{\gamma}
\sigma_{n} \mathbf{E}_{t_{i-1}}^{  p_{1}} \bigl[
\bigl(Z_{0}^{i} \bigr)^{p_{1}}
\Phi_{M'} \bigr]\\
& \le &  c_{V,M} |t_{i}-t_{i-1}|^{\gamma}
\sigma_{n},
\end{eqnarray*}
which is consistent with our claim (\ref{eq:bnd-Rj}).

Let us give another example of term which has to be analyzed in order
to bound the norm of $\mathrm{D}^{j}_{r}R_{i}$: the term $A$ defined as
\[
A := \mathbf{E}_{t_{i-1}}^{1/p_{1}} \biggl[ \biggl(\int
_{t_{i-1}}^{t_{i}} dr \int_{t_{i-1}}^{t_{i}}
ds \bigl[\mathrm {D}^{j}_{r}Q^k_s
\bigr]^{2} \biggr)^{{p_{1}}/{2}} \Phi_{M'} \biggr].
\]
Along the same lines as above, using (\ref{eq:Q-zero-order}), we find
\[
A \le c_{M,V} \int_{t_{i-1}}^{t_{i}} ds
K^{2}(t,s) \int_{t_{i-1}}^{t_{i}} dr
K^{2}(t,r) = c_{M,V} \sigma_{n}^{4},
\]
which is a remainder term with respect to (\ref{eq:bnd-Rj}). Notice
that many other higher order terms have to be evaluated in order to
complete the proof. We omit these cumbersome but routine developments
for sake of conciseness.
\end{pf}

We now turn to the bound on $A_2(\rho)$:

%leA.5 #&#
\begin{lemma} \label{lem:A2}
Recall that $A_2(\rho)$ is defined as $A_2(\rho)=
\|\det(\Gamma_{\mathcal{U}_{i},t_{i-1}})^{-1}\Phi_{M'}\times\Phi
_{c_i,\epsilon_i}\|
_{p_3,t_{i-1}}^{k_3}$. Then this quantity is uniformly bounded in $n$,
$\rho$ and $\omega\in\Omega$.
\end{lemma}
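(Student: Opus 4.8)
The goal is to show that $A_2(\rho)=\|\det(\Gamma_{\cu_{i},t_{i-1}})^{-1}\Phi_{M'}\Phi_{c_i,\epsilon_i}\|_{p_3,t_{i-1}}^{k_3}$ is bounded uniformly in $n$, $\rho\in[0,1]$ and $\omega$. Recall that $\cu_{i}=\si_{n}^{-1}(I_{i}+\rho R_{i})$, so $\Gamma_{\cu_{i},t_{i-1}}=\si_{n}^{-2}\,\Gamma_{I_{i}+\rho R_{i},t_{i-1}}$ and hence
\[
\det(\Gamma_{\cu_{i},t_{i-1}})^{-1}=\si_{n}^{2m}\,\det(\Gamma_{I_{i}+\rho R_{i},t_{i-1}})^{-1}.
\]
The plan is therefore to show that on the localization set where $\Phi_{M'}\Phi_{c_i,\epsilon_i}\neq 0$ we have a \emph{deterministic} lower bound $\Gamma_{I_{i}+\rho R_{i},t_{i-1}}\ge \frac{1}{C}\,\si_{n}^{2}\,\id_{m}$ for a constant $C$ depending only on $\lambda,\Lambda$ (and $M'$ through the localization), which immediately yields $\det(\Gamma_{\cu_{i},t_{i-1}})^{-1}\Phi_{M'}\Phi_{c_i,\epsilon_i}\le C^{m}$ and hence the desired uniform bound on $A_2(\rho)$.

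First I would write, for a unit vector $v\in\R^{m}$,
\[
v^{*}\,\Gamma_{I_{i}+\rho R_{i},t_{i-1}}\,v
=\sum_{j=1}^{d}\int_{t_{i-1}}^{t_{i}}\bigl|\,v^{*}\mathrm{D}_{r}^{j}(I_{i}+\rho R_{i})\,\bigr|^{2}\,dr
\ge \tfrac12\sum_{j=1}^{d}\int_{t_{i-1}}^{t_{i}}\bigl|v^{*}\mathrm{D}_{r}^{j}I_{i}\bigr|^{2}dr
-\sum_{j=1}^{d}\int_{t_{i-1}}^{t_{i}}\bigl|v^{*}\mathrm{D}_{r}^{j}R_{i}\bigr|^{2}dr,
\]
using $|a+b|^{2}\ge \frac12|a|^{2}-|b|^{2}$. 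By the computation of $\Gamma_{I_{i},t_{i-1}}$ performed in the proof of Proposition \ref{prop:bound-J2}, the first term equals $\tfrac12\,\si_{n}^{2}\,v^{*}V(X_{t_{i-1}})V^{*}(X_{t_{i-1}})v\ge \tfrac{\lambda}{2}\,\si_{n}^{2}$ by the ellipticity condition \eqref{eq:elliptic-condition-V}. For the second term, the key point is that we have localized by $\Phi_{c_i,\epsilon_i}\equiv\Phi_{c_i,\epsilon_i}(\sum_{j=1}^{d}\int_{t_{i-1}}^{t_{i}}|\mathrm{D}_{r}^{j}R_{i}|^{2}dr)$, which vanishes as soon as $\sum_{j}\int_{t_{i-1}}^{t_{i}}|\mathrm{D}_{r}^{j}R_{i}|^{2}dr>c_{i}+\epsilon_{i}=\tfrac32 c_{i}$; recalling from \eqref{eq:def-ci-epsiloni} and Lemma \ref{lem:prop-partition}(iii) that $c_{i}=\tfrac{\lambda}{4}\si_{n}^{2}$, on the support of $\Phi_{c_i,\epsilon_i}$ we have $\sum_{j}\int_{t_{i-1}}^{t_{i}}|\mathrm{D}_{r}^{j}R_{i}|^{2}dr\le \tfrac{3\lambda}{8}\si_{n}^{2}$, and since $\rho\le 1$ the same bound controls $\rho^{2}\sum_{j}\int|\mathrm{D}^{j}_{r}R_{i}|^{2}dr$. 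Combining, on $\{\Phi_{c_i,\epsilon_i}\neq 0\}$,
\[
v^{*}\,\Gamma_{I_{i}+\rho R_{i},t_{i-1}}\,v\ge \tfrac{\lambda}{2}\si_{n}^{2}-\tfrac{3\lambda}{8}\si_{n}^{2}=\tfrac{\lambda}{8}\si_{n}^{2},
\]
so $\Gamma_{\cu_{i},t_{i-1}}\ge \tfrac{\lambda}{8}\id_{m}$ there, whence $\det(\Gamma_{\cu_{i},t_{i-1}})^{-1}\Phi_{M'}\Phi_{c_i,\epsilon_i}\le (8/\lambda)^{m}$ pointwise; taking conditional $L^{p_3}$ norms and the $k_{3}$-th power gives $A_2(\rho)\le (8/\lambda)^{m k_{3}}$, uniformly in $n,\rho,\omega$.

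The main (and essentially only) obstacle is bookkeeping: one must make sure that the constants in $c_{i},\epsilon_{i}$ were designed precisely so that the mass of $\mathrm{D}R_{i}$ permitted by the localization is a strict fraction — here $3/4$ — of the deterministic lower eigenvalue $\tfrac{\lambda}{2}\si_{n}^{2}$ coming from $I_{i}$, leaving a positive gap; this is exactly the role of the choice $\epsilon_{i}=c_{i}/2$ and $c_{i}=\tfrac{\lambda}{4}\si_{n}^{2}$ in \eqref{eq:def-ci-epsiloni}. A minor additional point is to note that the conditional expectation $\be_{t_{i-1}}[\,\cdot\,]$ of a quantity bounded by a deterministic constant is bounded by that same constant, so no moment estimate is needed beyond the pointwise bound. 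One should also recall that $\Gamma_{\cu_{i},t_{i-1}}$ is $\cf_{t_{i-1}}$-conditionally computed on $[t_{i-1},t_{i}]$, consistent with the conditional Sobolev norm $\|\cdot\|_{p_3,t_{i-1}}$ in \eqref{eq:def-conditional-sobolev}, so all the derivatives above are $\mathrm{D}$-derivatives restricted to $[t_{i-1},t_{i}]$, matching the formulas used for $\mathrm{D}^{j}_{r}I_{i}$ and $\mathrm{D}^{j}_{r}R_{i}$ (the latter from \eqref{eq:deriv-R}).
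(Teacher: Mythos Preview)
Your proof is correct and follows essentially the same approach as the paper. The paper invokes Proposition~4 in \cite{Ba} to obtain the lower bound $\Gamma_{I_i+\rho R_i,t_{i-1}}\ge\bigl(\tfrac{\lambda}{2}\sigma_n^{2}-\sum_{j}\int_{t_{i-1}}^{t_i}|\mathrm{D}^{j}_{r}R_i|^{2}dr\bigr)\id_m$ and then uses the localization $\Phi_{c_i,\epsilon_i}$ exactly as you do, whereas you make the argument self-contained by writing out the elementary inequality $|a+b|^{2}\ge\tfrac12|a|^{2}-|b|^{2}$ and the Cauchy--Schwarz step $|v^{*}\mathrm{D}^{j}_{r}R_i|\le|\mathrm{D}^{j}_{r}R_i|$ explicitly; the resulting pointwise bound and conclusion are the same.
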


\begin{pf}
Recall that $\mathcal{U}_{i}=\sigma_{n}^{-1} (I_{i}+\rho R_i)$, and
remark that
using Proposition~4 in \cite{Ba}, we have that
\[
\det(\Gamma_{\mathcal{U}_{i},t_{i-1}})^{-1}\Phi_{c_i,\epsilon_i} \le
\sigma_{n}^{2m} \Biggl(\frac{1}2\lambda\int
_{t_{i-1}}^{t_i}K^2(t,s)\,ds -\sum
_{j=1}^d\int_{t_{i-1}}^{t_i}\bigl|
\mathrm{D}^j_rR_i\bigr|^2\,dr
\Biggr)^{-m}\Phi_{c_i,\epsilon_i}.
\]
Moreover, we have localized $\sum_{j=1}^d\int_{t_{i-1}}^{t_i}|\mathrm
{D}^j_rR_i|^2\,dr$ by $\Phi_{c_i,\epsilon_i}$ with $c_{i}=\frac
{\lambda\sigma
_{n}^{2}}{8}$. Thus we end up with
\[
\det(\Gamma_{\mathcal{U}_{i},t_{i-1}})^{-1}\Phi_{c_i,\epsilon_i} \le
\sigma_{n}^{2} \biggl(\frac{\lambda}{4} \int
_{t_{i-1}}^{t_i}K^2(t,s)\,ds
\biggr)^{-1},
\]
from which the result follows.
\end{pf}

The estimates for $A_3(\rho)$ are obtained in a similar fashion.
In fact, we have:

%leA.6 #&#
\begin{lemma}
\label{lem:app2}
The same conclusion as in Lemma~\ref{lem:A2} holds true for the
quantity $A_3(\rho)=
\|\mathcal{U}_{i}   \Phi_{M'}\|_{k_2,p_2,t_{i-1}}^{k_4}$.
\end{lemma}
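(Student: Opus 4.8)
The plan is to exploit the normalization $\cu_{i}=\si_{n}^{-1}(I_{i}+\rho R_{i})$, with $\rho\in[0,1]$, and to treat the contributions of $I_{i}$ and of $R_{i}$ separately. By the triangle inequality for the conditional Sobolev norm $\|\cdot\|_{k_{2},p_{2},t_{i-1}}$, I would first write
\begin{equation*}
A_{3}(\rho)^{1/k_{4}}
= \|\cu_{i}\,\Phi_{M'}\|_{k_{2},p_{2},t_{i-1}}
\le \si_{n}^{-1}\|I_{i}\,\Phi_{M'}\|_{k_{2},p_{2},t_{i-1}}
+\si_{n}^{-1}\|R_{i}\,\Phi_{M'}\|_{k_{2},p_{2},t_{i-1}},
\end{equation*}
and then show that each of the two terms on the right hand side is bounded by a constant independent of $n$, $\rho$ and $\omega$, so that the conclusion follows by raising to the power $k_{4}$.

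For the term involving $I_{i}$, I would use the explicit expression \eqref{eq:exp-I-i-cdt-gauss}: conditionally on $\cf_{t_{i-1}}$, the vector $I_{i}$ is centered Gaussian with covariance matrix $\Gamma_{I_{i},t_{i-1}}=\si_{n}^{2}V(X_{t_{i-1}})V^{*}(X_{t_{i-1}})\le\laa\,\si_{n}^{2}\,\id_{m}$ by Hypothesis \ref{hyp:vector-fields}(2), whence $\be_{t_{i-1}}[|I_{i}|^{p_{2}}]^{1/p_{2}}\le c_{p_{2},m}\,\laa^{1/2}\si_{n}$. Moreover, as recalled in the proof of Proposition \ref{prop:bound-J2}, $\mathrm{D}^{j}_{r}I_{i}=V_{j}(X_{t_{i-1}})K(t,r)\1_{[t_{i-1},t_{i}]}(r)$ and all higher-order Malliavin derivatives of $I_{i}$ vanish, so that the Sobolev norm of $I_{i}$ is again of order $\laa^{1/2}\si_{n}$. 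Multiplying by the $[0,1]$-valued localization $\Phi_{M'}$ only generates, via the Leibniz rule, additional factors built from the derivatives of $\Phi_{M'}(\bn_{\ga,p}^{i}(B))$, which are controlled exactly as in the proof of Proposition \ref{prop:bound-norm-phi}; using that $V$ is bounded together with all its derivatives, this yields $\si_{n}^{-1}\|I_{i}\,\Phi_{M'}\|_{k_{2},p_{2},t_{i-1}}\le c_{\laa,m}$.

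For the term involving $R_{i}$, I would simply invoke Lemma \ref{lem:app1}, whose argument handles the Sobolev norm of index $(k_{2},p_{2})$ along the very same lines sketched there, to obtain $\|R_{i}\,\Phi_{M'}\|_{k_{2},p_{2},t_{i-1}}\le c_{V,M}\,(t_{i}-t_{i-1})^{\ga}\,\si_{n}$; since $\rho\in[0,1]$ and $t_{i}-t_{i-1}\le 1$ this gives $\si_{n}^{-1}\|R_{i}\,\Phi_{M'}\|_{k_{2},p_{2},t_{i-1}}\le c_{V,M}$. Combining the two estimates and raising to the power $k_{4}$ yields the claimed uniform bound $A_{3}(\rho)\le(c_{\laa,m}+c_{V,M})^{k_{4}}=:c_{M,V,m}$. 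The only delicate point — and the one I regard as the crux — is the bookkeeping of the powers of $\si_{n}$: the factor $\si_{n}^{-1}$ produced by the normalization $\cu_{i}=\si_{n}^{-1}(I_{i}+\rho R_{i})$ must be exactly absorbed, which works precisely because the conditional Gaussian scaling of $I_{i}$ and the estimate of Lemma \ref{lem:app1} for $R_{i}$ each carry exactly one power of $\si_{n}$, so that no divergence in $n$ appears.
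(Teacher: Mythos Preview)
Your proposal is correct and follows essentially the same route as the paper: split $\cu_{i}=\si_{n}^{-1}(I_{i}+\rho R_{i})$, invoke Lemma~\ref{lem:app1} for the $R_{i}$ part, and handle the $I_{i}$ part via the explicit conditionally Gaussian computations already carried out in the proof of Proposition~\ref{prop:bound-J2}. The only cosmetic difference is that the paper separates $I_{i}$ from $\Phi_{M'}$ by a H\"older-type product inequality for Sobolev norms, $\|I_{i}\Phi_{M'}\|_{k_{2},p_{2},t_{i-1}}\le c\,\|I_{i}\|_{k_{2},p_{3},t_{i-1}}\|\Phi_{M'}\|_{k_{2},p_{4},t_{i-1}}$, whereas you treat the product directly via the Leibniz rule; both routes lead to the same uniform bound.
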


\begin{pf}
With respect to Lemma~\ref{lem:app1}, we only need to consider
additionally the bound for
\[
\|I_i\Phi_{M'}\|_{k_2,p_2,t_{i-1}}\le c\|I_i
\|_{k_2,p_3,t_{i-1}}\|\Phi_{M'}\|_{k_2,p_4,t_{i-1}}.
\]
The above follows from H\"older's inequality. Therefore the result
follows from straightforward calculations for $I_i$ as in the proof of
Proposition~\ref{prop:bound-J2}.
\end{pf}
\end{appendix}

\section*{Acknowledgment}

M. Besal\'{u} and S. Tindel are members of the BIGS
(Biology, Genetics and Statistics) team at INRIA.

% imsref loaded by daiva.urboniene, 2014-12-15 17:13:46

%
%\section{}
%\end{appendix}

% zodis "Acknowledgments" paliekamas pagal autoriu
%

%\begin{supplement}[id=suppA]
%\sname{Supplement A}
%\stitle{}
%\slink[doi]{10.1214/00-AOPXXXXSUPP} %[doi,text={...}] - jei reikia
%suskaldyti doi
%\sdatatype{.pdf}
%\sfilename{aopXXXX\_supp.pdf}
%\sdescription{}
%\end{supplement}

%\begin{thebibliography}{99}
%\bibitem[\protect\citeauthoryear{}{}]{r1}
%\bibitem{r1}
%\end{thebibliography}

\printaddresses
\end{document}